\numberwithin{equation}{section}
\newtheorem{theorem}{Theorem}[section]
\newtheorem{proposition}[theorem]{Proposition}
\newtheorem{lemma}[theorem]{Lemma}
\theoremstyle{definition}
\newtheorem{definition}[theorem]{Definition}
\theoremstyle{remark}
\newtheorem{remark}[theorem]{Remark}
\newtheorem{example}[theorem]{Example}
\newtheorem*{acknowledgements}{Acknowledgements}
\DeclareMathOperator{\cn}{cn}
\DeclareMathOperator{\sn}{sn}
\DeclareMathOperator{\am}{am}
\begin{document}

\title{Li--Yau type inequality for curves in any codimension}
\author[T.~Miura]{Tatsuya Miura}
\address[T.~Miura]{Department of Mathematics, Tokyo Institute of Technology, Meguro, Tokyo 152-8511, Japan}
\email{miura@math.titech.ac.jp}
\keywords{Li--Yau inequality, embeddedness, elastica, multiplicity, elastic flow, elastic network}
\subjclass[2020]{53A04, 49Q10, and 53E40}

\begin{abstract}
  For immersed curves in Euclidean space of any codimension we establish a Li--Yau type inequality that gives a lower bound of the (normalized) bending energy in terms of multiplicity.
  The obtained inequality is optimal for any codimension and any multiplicity except for the case of planar closed curves with odd multiplicity; in this remaining case we discover a hidden algebraic obstruction and indeed prove an exhaustive non-optimality result.
  The proof is mainly variational and involves Langer--Singer's classification of elasticae and Andr\'{e}'s algebraic-independence theorem for certain hypergeometric functions.
  We also discuss applications to elastic flows, networks, and knots.
\end{abstract}

\maketitle


\section{Introduction}\label{sectintroduction}

The classical Li--Yau inequality \cite{Li1982} asserts that if a closed surface $\Sigma\subset\mathbf{R}^n$, $n\geq3$, has a point of multiplicity $k\geq1$, then the Willmore energy $W[\Sigma]:=\int_{\Sigma}|H|^2dS$ is bounded below by multiplicity in the form of
\begin{equation}\label{eq:Li-Yau_Willmore}
  W[\Sigma]\geq4\pi k,
\end{equation}
where $H$ denotes the mean curvature vector.
(See also a different proof in $\mathbf{R}^3$ \cite{Topping1998}.)
This estimate is sharp due to a nearly $k$-times covered sphere.
In particular, if $W[\Sigma]<8\pi$ then $\Sigma$ must be embedded.
This result is used as a fundamental tool in many studies; the Willmore flow \cite{Kuwert2004}, the Willmore conjecture \cite{Marques2014}, and others.

In this paper we establish a one-dimensional analogue of the Li--Yau inequality, and reveal that a new phenomenon emerges due to low dimensionality.
For an immersed curve $\gamma$ in $\mathbf{R}^n$ we let $\kappa$ denote the curvature vector $\kappa:=\partial_s^2\gamma$, where $\partial_s\psi:=\frac{1}{|\gamma'|}\psi'$, and define the {\em normalized bending energy} $\bar{B}[\gamma]$ as the bending energy $B[\gamma]:=\int_\gamma|\kappa|^2ds$ normalized by the length $L=L[\gamma]:=\int_\gamma ds$ to be scale-invariant:
\begin{equation*}
  \bar{B}[\gamma]:=L[\gamma]B[\gamma]=L\int_\gamma|\kappa|^2ds.
\end{equation*}
In addition, using the complete elliptic integral of the first kind $K(m)$ and of the second kind $E(m)$, we define a unique parameter $m^*\in(0,1)$ such that $K(m^*)=2E(m^*)$, and then the key universal constant $\varpi^*>0$ by
\begin{equation}\label{eq:constant}
  \varpi^*:= 32(2m^*-1)E(m^*)^2\ (=28.109...)
\end{equation}
Finally, we say that a curve $\gamma$ has a point $p\in\mathbf{R}^n$ of {\em multiplicity} $k$ if the preimage $\gamma^{-1}(p)$ contains at least $k$ distinct points.

Our first theorem asserts a general Li--Yau type inequality involving multiplicity for closed curves $\gamma:\mathbf{T}^1\to\mathbf{R}^n$, where $\mathbf{T}^1:=\mathbf{R}/\mathbf{Z}$.
Hereafter we specify the natural $H^2$-Sobolev regularity for curves.

\begin{theorem}[Multiplicity inequality for closed curves]\label{thm:Li-Yau_closed}
  Let $n\geq2$ and $k\geq2$.
  Let $\gamma:\mathbf{T}^1\to\mathbf{R}^n$ be an immersed closed $H^2$-curve with a point of multiplicity $k$.
  Then
  \begin{equation}\label{eq:Li-Yau_closed}
    \bar{B}[\gamma]\geq \varpi^*k^2.
  \end{equation}
\end{theorem}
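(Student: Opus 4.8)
\emph{Proof strategy.} The plan is to localize at the point $p$ of high multiplicity and reduce the estimate to a sharp lower bound for a single \emph{based loop}, meaning an immersed (say $W^{2,2}$) curve $\eta\colon[0,1]\to\mathbf{R}^n$ with $\eta(0)=\eta(1)$. Choose $k$ distinct points $t_1<\dots<t_k$ in $\gamma^{-1}(p)$ and cut $\mathbf{T}^1$ at them into $k$ consecutive arcs $\gamma_1,\dots,\gamma_k$; each $\gamma_i$ is an immersed based loop at $p$, and since curvature and length are additive over this decomposition, $L[\gamma]=\sum_i L_i$ and $B[\gamma]=\sum_i B[\gamma_i]$ with $L_i:=L[\gamma_i]>0$. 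Granting the \emph{single-loop inequality} $\bar{B}[\eta]=L[\eta]\,B[\eta]\ge\varpi^*$ for every based loop $\eta$, we get $B[\gamma_i]\ge\varpi^*/L_i$, and the Cauchy--Schwarz (equivalently AM--HM) inequality yields
\[
  \bar{B}[\gamma]=\Big(\sum_i L_i\Big)\Big(\sum_j B[\gamma_j]\Big)\ \ge\ \varpi^*\Big(\sum_i L_i\Big)\Big(\sum_j \tfrac1{L_j}\Big)\ \ge\ \varpi^* k^2,
\]
which is \eqref{eq:Li-Yau_closed}. (Equality forces all $L_i$ equal and every $\gamma_i$ optimal; assembling optimal loops into one closed curve is possible for even $k$, and the obstruction to doing so for odd $k$ is precisely the separate non-optimality phenomenon, which plays no role here.)

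It remains to prove the single-loop inequality, and I would argue variationally. By scale invariance of $\bar{B}$ normalize $L[\eta]=1$ and minimize $B$ over immersed $W^{2,2}$ based loops. Existence of a minimizer $\eta_\infty$ follows from the direct method: after reparametrization by arclength and a translation fixing the base point at the origin, a minimizing sequence is bounded in $W^{2,2}$, hence converges along a subsequence weakly in $W^{2,2}$ and strongly in $C^1$ to a unit-speed based loop, along which $B$ is lower semicontinuous. Computing the first variation under variations $V$ with $V(0)=V(1)$ (the base point may move, but the two endpoints stay identified) and with $\nabla_s V(0),\nabla_s V(1)$ free shows that $\eta_\infty$ solves the elastica equation $2\nabla_s^2\kappa+|\kappa|^2\kappa=\lambda\kappa$ for some $\lambda\in\mathbf{R}$; the free-tangent boundary terms give the natural conditions $\kappa(0)=\kappa(1)=0$, while the boundary term carrying $V$ drops out automatically because the elastica force $2\nabla_s\kappa+(|\kappa|^2-\lambda)T$ is constant along $\eta_\infty$. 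Hence a minimizer is an elastica arc whose curvature vector vanishes at both endpoints.

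The crux is to identify this arc. First, it is planar: at the inflection $s=0$ we have $\kappa(0)=0$, and necessarily $\nabla_s\kappa(0)\neq0$ (otherwise the elastica ODE and real-analyticity of its solutions would force $\kappa\equiv0$, impossible for an immersed based loop), so the planar elastica with the same initial data solves the elastica system in $\mathbf{R}^n$ inside the $2$-plane through $\eta_\infty(0)$ spanned by $T(0)$ and $\nabla_s\kappa(0)$, and uniqueness identifies it with $\eta_\infty$; equivalently, one invokes Langer--Singer's classification, by which the only elasticae possessing an inflection are the planar \emph{wavelike} ones (and straight lines). For a planar wavelike elastica the curvature is $\kappa(s)=2\sqrt{m}\,\beta\,\cn(\beta(s-s_0),m)$ for parameters $m\in(0,1)$, $\beta>0$, whose zeros are equally spaced with gap $2K(m)/\beta$; thus a minimizing arc runs between two inflections and has length $2jK(m)/\beta$ for some integer $j\ge1$. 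Imposing the closure condition $\int_\eta(\cos\theta,\sin\theta)\,ds=0$ with tangent angle $\theta(s)=2\arcsin(\sqrt m\,\sn(\beta(s-s_0),m))$ and evaluating the elementary identities $\int_0^K\sn^2=\tfrac{K-E}{m}$ and $\int_0^K\cn^2=\tfrac{E-(1-m)K}{m}$ (with $K=K(m)$, $E=E(m)$), one finds the second tangent component integrates to zero automatically while the first forces $K(m)=2E(m)$, i.e.\ $m=m^*$: the arc is a petal of Euler's figure-eight elastica (for $j\ge2$, a $j$-fold analogue). The same two integrals give $L[\eta]=2jK(m^*)/\beta$ and $B[\eta]=8j\beta\,(2m^*-1)E(m^*)$, whence
\[
  \bar{B}[\eta]=16\,j^2\,K(m^*)(2m^*-1)E(m^*)=32\,j^2\,(2m^*-1)E(m^*)^2=j^2\varpi^*\ \ge\ \varpi^*,
\]
with equality for $j=1$. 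This proves the single-loop inequality, hence Theorem~\ref{thm:Li-Yau_closed}.

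I expect the genuine difficulty to be this identification step rather than the reduction or the existence/regularity of the minimizer: one must rule out non-planar elasticae cleanly, control the admissible range of the multiplier $\lambda$ (equivalently of the pair $(m,\beta)$), and check that the closure identity is the \emph{only} remaining constraint, so that the critical based loops are exhausted by the one-parameter (scaling) family indexed by $j$ above and the global minimizer is the $j=1$ figure-eight petal.
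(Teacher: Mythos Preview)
Your proposal is correct and follows essentially the same route as the paper: cut at the $k$ preimages of $p$, apply the sharp single-loop estimate $\bar{B}\ge\varpi^*$ to each piece, and combine via AM--HM. Your single-loop argument also matches the paper's Proposition~\ref{prop:minimizer} (direct method, Lagrange multiplier, natural boundary condition $\kappa=0$ at the endpoints, then Langer--Singer to reduce to planar wavelike elasticae, and finally the closure condition forcing $m=m^*$). The only cosmetic difference is that the paper first states an open-curve version (Theorem~\ref{thm:Li-Yau_open}) and deduces the closed case by cutting once more, whereas you decompose the closed curve directly into $k$ loops; the content is the same.
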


In particular, if an immersed closed curve $\gamma$ has the property that $\bar{B}[\gamma]<4\varpi^*$, then $\gamma$ must be embedded.
This threshold is optimal because a figure-eight elastica gives an explicit example of a non-embedded analytic planar closed curve with energy $\bar{B}=4\varpi^*$ (see Definition \ref{def:figureeight} and Lemma \ref{lem:figureeight}).

We also discuss more on optimality and rigidity in inequality \eqref{eq:Li-Yau_closed}.
On one hand, our inequality is optimal for many pairs of $(n,k)$, namely either if $n\geq3$ or if $k$ is even.
We also prove the rigidity that any optimal curve is a {\em $k$-leafed elastica}, i.e., the curve consists of $k$ half-fold figure-eight elasticae of same length (see Definition \ref{def:leafedelastica}).

\begin{theorem}[Optimality and rigidity]\label{thm:rigidity_closed}
  Let $n\geq2$ and $k\geq2$.
  Suppose either that $n\geq3$ or that $k$ is even.
  Then there exists an immersed closed $H^2$-curve $\gamma:\mathbf{T}^1\to\mathbf{R}^n$ with a point of multiplicity $k$ such that
  \begin{equation}\label{eq:rigidity_closed}
    \bar{B}[\gamma]= \varpi^*k^2.
  \end{equation}
  In addition, equality \eqref{eq:rigidity_closed} is attained if and only if $\gamma$ is a closed $k$-leafed elastica.
\end{theorem}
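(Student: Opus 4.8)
The plan is to prove existence and rigidity separately, both by reduction to the single-loop (open-arc) situation.

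\emph{Existence.} I would first record the properties of a \emph{half-fold figure-eight elastica} $\eta$ (see Definitions \ref{def:figureeight} and \ref{def:leafedelastica}): it is an analytic elastic arc whose two endpoints coincide at one point $p$, obtained by halving the figure-eight elastica at its self-intersection, so that by the symmetry of the figure-eight its two one-sided unit tangents at $p$ form a fixed angle, and its self-normalized energy $\ell\!\int_\eta|\kappa|^2\,ds$ equals $\tfrac14\bar B[\text{figure-eight}]=\varpi^*$ by Lemma \ref{lem:figureeight}, where $\ell=L[\eta]$. Then I would build $\gamma$ by concatenating $k$ congruent copies $\eta_1,\dots,\eta_k$ of $\eta$, each moved by a rigid motion so that the terminal point of $\eta_i$ is $p$ and the terminal unit tangent of $\eta_i$ equals the initial unit tangent of $\eta_{i+1}$ (indices mod $k$); the result is then a closed, everywhere regular, $C^1$ curve, hence an immersed $H^2$-curve since each $\eta_i$ is analytic, and it has $p$ as a point of multiplicity $k$. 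The only nonformal point is whether such a compatible family of rigid motions exists: in the plane this is an angle condition that closes up precisely when $k$ is even (for $k=2$ one recovers the figure-eight elastica itself), while for $n\ge3$ the extra rotational freedom in $SO(n)$ removes the parity obstruction, covering the remaining cases. Finally, by scale-invariance of $\bar B$ and additivity of $L$ and $B$, one has $L[\gamma]=k\ell$ and $B[\gamma]=k\!\int_\eta|\kappa|^2\,ds$, so $\bar B[\gamma]=L[\gamma]B[\gamma]=k^2\,\ell\!\int_\eta|\kappa|^2\,ds=\varpi^*k^2$, which is \eqref{eq:rigidity_closed}.

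\emph{Rigidity.} For the equality case I would use the sharp single-loop estimate with its rigidity (the core of the proof of Theorem \ref{thm:Li-Yau_closed}, obtained via Langer--Singer's classification of elasticae together with an analysis of the closure condition): every immersed $H^2$-arc $\sigma$ with coinciding endpoints satisfies $L[\sigma]\!\int_\sigma|\kappa|^2\,ds\ge\varpi^*$, with equality if and only if $\sigma$ is a half-fold figure-eight elastica. Let $\gamma$ attain \eqref{eq:rigidity_closed} and pick $k$ distinct preimages $t_1<\dots<t_k$ in $\mathbf{T}^1$ of the multiplicity-$k$ point; cutting $\gamma$ there gives $k$ loop-arcs $\sigma_1,\dots,\sigma_k$ with $\ell_i:=L[\sigma_i]$ and $b_i:=\int_{\sigma_i}|\kappa|^2\,ds$, and $L[\gamma]=\sum_i\ell_i$, $B[\gamma]=\sum_i b_i$. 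The loop estimate gives $\ell_i b_i\ge\varpi^*$, whence by the Cauchy--Schwarz inequality
\begin{equation*}
  \bar B[\gamma]=\Big(\sum_{i=1}^{k}\ell_i\Big)\Big(\sum_{i=1}^{k}b_i\Big)\ge\Big(\sum_{i=1}^{k}\sqrt{\ell_i b_i}\Big)^{2}\ge k^{2}\varpi^{*}.
\end{equation*}
Since $\bar B[\gamma]=\varpi^*k^2$, equality holds in every step: Cauchy--Schwarz equality forces $b_i/\ell_i$ to be independent of $i$, and $\sum_i\sqrt{\ell_ib_i}=k\sqrt{\varpi^*}$ forces $\ell_ib_i=\varpi^*$ for every $i$; together these give that all $\ell_i$ are equal and each $\sigma_i$ attains equality in the loop estimate, hence is a half-fold figure-eight elastica. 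Thus $\gamma$ is a $C^1$-concatenation of $k$ half-fold figure-eight elasticae of equal length joined at one common point, i.e., a closed $k$-leafed elastica in the sense of Definition \ref{def:leafedelastica}; the reverse implication is exactly the existence computation above.

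\emph{Main obstacle.} The genuinely hard ingredients are the single-loop rigidity statement --- upgrading $\ell b\ge\varpi^*$ to a full classification of equality cases, which is the technical heart shared with Theorem \ref{thm:Li-Yau_closed} --- and, for existence, pinning down the endpoint-tangent angle of the half-fold figure-eight elastica precisely enough to verify that $k$ congruent copies glue into a closed $C^1$ curve exactly when $n\ge3$ or $k$ is even. The remaining steps (scaling, additivity, the Cauchy--Schwarz bookkeeping, and the $H^2$-regularity of the concatenation) are routine.
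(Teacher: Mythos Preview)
Your strategy is essentially the paper's: existence via explicit leafed-elastica constructions, rigidity by cutting at the multiplicity point and invoking the sharp single-loop bound (Proposition~\ref{prop:minimizer}). Your use of Cauchy--Schwarz in place of the paper's HM--AM inequality is a harmless variant; equality in either forces all $\ell_i$ equal and each $\ell_i b_i=\varpi^*$, so the rigidity conclusion is identical.

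One point deserves sharpening. Your sentence ``for $n\ge3$ the extra rotational freedom in $SO(n)$ removes the parity obstruction'' is not quite right as stated: the closure condition for $k=3$ requires three unit vectors in $\mathbf{S}^{n-1}$ with pairwise inner product $\cos 2\phi^*$, and the associated Gram matrix is positive semidefinite only if $\cos 2\phi^*\ge -\tfrac12$, i.e.\ $\phi^*\le\pi/3$. So rotational freedom alone is insufficient; one genuinely needs the numerical estimate $\phi^*<\pi/3$, which the paper establishes via $m^*>0.75$ (Lemmas~\ref{lem:parameter} and~\ref{lem:angle}) and then uses to build the \emph{elastic propeller} (Example~\ref{ex:propeller}). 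For odd $k\ge5$ the paper then glues a propeller with a $\tfrac{k-3}{2}$-fold figure-eight (Example~\ref{ex:propellereight}). You correctly flag this angle determination as a ``main obstacle'', but the body of your existence argument understates what is actually required.
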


In particular, any $2$-leafed elastica is (up to invariances) uniquely given by a figure-eight elastica, which is analytic and planar.
Any $3$-leafed is uniquely given by a new three-dimensional shape, which we introduce in Example \ref{ex:propeller} and call \emph{elastic propeller}, whose regularity is of class $C^{2,1}=W^{3,\infty}$ but not $C^3$.
For $k\geq4$, leafed elasticae are generically nonunique and not $C^3$.
See Section \ref{sect:rigidity} for details.

On the other hand, somewhat interestingly, in the remaining case of $n=2$ and odd $k\geq3$ (planar closed curves with odd multiplicity) a new algebraic obstruction comes into play and indeed we can prove an exhaustive non-optimality result.

\begin{theorem}[Non-optimality]\label{thm:nonoptimality}
  For any odd integer $k\geq3$ there exists a positive number $\varepsilon_k>0$ such that for any immersed (planar) closed $H^2$-curve $\gamma:\mathbf{T}^1\to\mathbf{R}^2$ with a point of multiplicity $k$,
  $$\bar{B}[\gamma] \geq \varpi^*k^2 + \varepsilon_k.$$
\end{theorem}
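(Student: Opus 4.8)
My plan is to avoid any compactness argument at the level of curves and instead, following the sub-loop decomposition behind Theorem~\ref{thm:Li-Yau_closed}, reduce the statement to a finite-dimensional minimization over turning angles, from which the gap $\varepsilon_k$ emerges through an arithmetic input. Let $\gamma\colon\mathbf{T}^1\to\mathbf{R}^2$ be an immersed closed $H^2$-curve of length $L$ with a point $p$ of multiplicity $k$; fix $k$ distinct preimages of $p$ and cut $\gamma$ there into sub-loops $\gamma_1,\dots,\gamma_k$ based at $p$, of lengths $\ell_i>0$ with $\sum_i\ell_i=L$, and let $\theta_i\in\mathbf{R}$ be the signed turning of $\gamma_i$ (the net change of its tangent angle), so that $\sum_i\theta_i$ is the total turning of the closed planar curve $\gamma$ and hence lies in $2\pi\mathbf{Z}$. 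The analysis underlying Theorems~\ref{thm:Li-Yau_closed} and~\ref{thm:rigidity_closed} provides a sharp planar loop estimate: every $H^2$-loop of length $\ell$ and signed turning $\theta$ satisfies $\ell\int|\kappa|^2\ge\varpi(\theta)$, where $\varpi\colon\mathbf{R}\to[\varpi^*,\infty)$ is even and continuous with $\varpi(\theta)=\varpi^*$ precisely for $\theta=\pm\theta^*$, $\theta^*$ being the turning of a half-fold figure-eight elastica, and where equality forces the loop to be a congruent rescaled copy of it. Applying this to each $\gamma_i$ and using Cauchy--Schwarz, $\big(\sum_i\sqrt{\varpi(\theta_i)}\big)^2\le\big(\sum_i\varpi(\theta_i)/\ell_i\big)\big(\sum_i\ell_i\big)$, yields
\[
  \bar B[\gamma]=L\sum_{i=1}^k\int_{\gamma_i}|\kappa|^2\ \ge\ L\sum_{i=1}^k\frac{\varpi(\theta_i)}{\ell_i}\ \ge\ \Big(\sum_{i=1}^k\sqrt{\varpi(\theta_i)}\Big)^2\ \ge\ k^2\varpi^*,
\]
recovering Theorem~\ref{thm:Li-Yau_closed}; the task is to improve the last step quantitatively when $k$ is odd.

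Next I would localize to near-optimal curves. If $\bar B[\gamma]<k^2\varpi^*+1$, then $\sum_i|\theta_i|\le\sum_i\int_{\gamma_i}|\kappa|\le\big(L\sum_i\int_{\gamma_i}|\kappa|^2\big)^{1/2}=\bar B[\gamma]^{1/2}<(k^2\varpi^*+1)^{1/2}=:M_k$, so $\theta=(\theta_1,\dots,\theta_k)$ lies in the compact set $A_k:=\{\theta\in\mathbf{R}^k:\ \sum_i|\theta_i|\le M_k,\ \sum_i\theta_i\in2\pi\mathbf{Z}\}$. Set
\[
  \varepsilon_k:=\min_{\theta\in A_k}\Big[\Big(\sum_{i=1}^k\sqrt{\varpi(\theta_i)}\Big)^2-k^2\varpi^*\Big],
\]
a minimum attained by compactness of $A_k$ and continuity of $\varpi$, and $\ge0$ by the displayed chain. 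Then $\bar B[\gamma]\ge k^2\varpi^*+\varepsilon_k$ whenever $\bar B[\gamma]<k^2\varpi^*+1$, and the complementary range is trivial; so, after replacing $\varepsilon_k$ by $\min\{\varepsilon_k,1\}$, the theorem reduces to showing $\varepsilon_k>0$ for odd $k\ge3$.

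Suppose $\varepsilon_k=0$, attained at some $\theta\in A_k$. Since $\sqrt{\varpi(\theta_i)}\ge\sqrt{\varpi^*}$ for all $i$, the deficit vanishes only if $\varpi(\theta_i)=\varpi^*$ for every $i$, i.e.\ $\theta_i\in\{+\theta^*,-\theta^*\}$; writing $a=\#\{i:\theta_i=+\theta^*\}$ we get $\sum_i\theta_i=(2a-k)\theta^*\in2\pi\mathbf{Z}$ with $2a-k$ odd, hence nonzero. Thus $\varepsilon_k=0$ would force $j\theta^*\in2\pi\mathbf{Z}$ for some odd $j\ne0$; it suffices to rule this out, for which it is enough that $\theta^*/\pi$ be irrational. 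To this end I would compute $\theta^*$ from Langer--Singer's classification: the half-fold figure-eight elastica is the inflectional free elastica with parameter $m^*$, its curvature $\propto\cn(\,\cdot\,,m^*)$ over one arc between consecutive zeros, and integrating the tangent angle over that arc gives $\theta^*=4\arcsin\sqrt{m^*}$, i.e.\ $\sqrt{m^*}=\sin(\theta^*/4)$; hence if $\theta^*/\pi$ were rational, then $m^*$ would be algebraic. But the unique solution $m^*\in(0,1)$ of $K(m^*)=2E(m^*)$ is transcendental: were it algebraic, $K(m^*)$ and $E(m^*)$ would be values at an algebraic point of the period functions $K,E$ satisfying the nontrivial algebraic relation $K-2E=0$, which is forbidden by Andr\'e's algebraic-independence theorem for the associated hypergeometric functions (the finitely many complex-multiplication values of $m$, where extra relations involving $\pi$ genuinely occur, being handled separately: there $K(m)^2$ is transcendental by Chudnovsky's theorem, whereas $K-2E=0$ would make it algebraic). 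This contradiction gives $\varepsilon_k>0$, hence the theorem.

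The decisive obstacle is this last step: extracting the precise algebraic form of $\theta^*$ (and, more generally, of the leaf-closure constraint) from Langer--Singer's classification, and invoking Andr\'e's theorem cleanly — note that $m^*$ is itself transcendental, so one cannot merely quote values of hypergeometric functions at algebraic arguments, and the input must be phrased as an (algebraic-)independence statement about the period functions along the locus $\{K=2E\}$. Everything else is soft. For even $k$ one instead takes $a=k/2$, so $\sum_i\theta_i=0\in2\pi\mathbf{Z}$ and $\varepsilon_k=0$, consistently with Theorem~\ref{thm:rigidity_closed}; and for $n\ge3$ the scalar constraint $\sum_i\theta_i\in2\pi\mathbf{Z}$ is replaced by a weaker closing condition that the optimal leaf configuration can always meet — which is exactly why the algebraic obstruction remains hidden outside the planar odd case. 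A final routine point is to verify that the sharp loop estimate and its rigidity, used above as a black box, are supplied in the needed generality by the proofs of Theorems~\ref{thm:Li-Yau_closed} and~\ref{thm:rigidity_closed}.
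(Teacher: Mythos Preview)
Your approach is conceptually different from the paper's: instead of establishing existence of a minimizing closed curve in $C_k$ by a direct-method argument (Proposition~\ref{prop:existence_planarodd}) and then invoking rigidity to force it to be a leafed elastica, you attempt a finite-dimensional reduction to the turning angles $\theta_i$ of the sub-loops. Both routes terminate at the same arithmetic obstruction (irrationality of $\theta^*/\pi$, equivalently transcendence of $m^*$ via Andr\'e's theorem), and your formula $\theta^*=4\arcsin\sqrt{m^*}$ is correct --- it recovers the paper's angle through $\theta^*=2\pi-2\phi^*$.

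There is, however, a genuine gap. The key properties you attribute to $\varpi(\theta)$ --- continuity, and above all that $\varpi(\theta)=\varpi^*$ \emph{only} for $\theta=\pm\theta^*$ --- are \emph{not} supplied by the proofs of Theorems~\ref{thm:Li-Yau_closed} and~\ref{thm:rigidity_closed}. Proposition~\ref{prop:minimizer} gives the unconstrained minimum $\varpi^*$ over all loops (no turning condition imposed) together with the rigidity that any curve attaining it is a half-fold figure-eight; it says nothing about the turning-constrained infimum $\varpi(\theta)$. To conclude $\varpi(\theta)>\varpi^*$ whenever $\theta\neq\pm\theta^*$ you must know that the infimum $\varpi(\theta)$ is \emph{attained}, and that requires a direct-method compactness argument in the class of loops with prescribed turning --- precisely the curve-level compactness you announce you will avoid. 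Without it, a minimizer of your finite-dimensional functional on $A_k$ need only satisfy $\varpi(\theta_i)=\varpi^*$ as an infimum, not $\theta_i\in\{\pm\theta^*\}$, and the argument stalls. The paper's route --- compactness once at the level of the full closed curve (Proposition~\ref{prop:existence_planarodd}), then the rigidity of Theorem~\ref{thm:rigidity_open}, then the nonexistence Proposition~\ref{prop:nonexistenceleaf} --- never needs to analyse $\varpi(\theta)$ at all. Your aside about complex-multiplication exceptions to Andr\'e's theorem is also unnecessary: the version quoted in the paper (algebraic independence of ${}_2F_1[\tfrac12,\tfrac12;1;z]$ and ${}_2F_1[-\tfrac12,\tfrac12;1;z]$ over $\overline{\mathbf Q}$ for every algebraic $z$ with $0<|z|<1$) has no exceptional points, so the contradiction ``$m^*$ algebraic $\Rightarrow$ $K(m^*)-2E(m^*)=0$ is a forbidden relation'' is clean as stated.
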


Our results are new for all $n\geq3$ or $k\geq3$.
In their very recent study \cite{Mueller2021}, M\"{u}ller--Rupp obtain Theorems \ref{thm:Li-Yau_closed} and \ref{thm:rigidity_closed} for the special pair $(n,k)=(2,2)$, finding the non-simple threshold $4\varpi^*$ ($=:c^*=112.439...$ in their notation).
They crucially use the assumption that $(n,k)=(2,2)$ since their proof relies on the fact that any planar closed curve with rotation number $\neq\pm1$ has a self-intersection; in particular, they explicitly mention that the case of $n\geq3$ is remained open.
Our results resolve this problem, while retrieving their result as a special case by a different (in fact shorter) proof.
For a general multiplicity $k$ the non-optimal estimate $\bar{B}\geq16k^2$ was previously obtained by several authors \cite[Corollary 3.3.1.3]{Polden1996}, \cite[Theorem 4.4]{Mosel1998}, \cite[Theorem 1.6]{Wheeler2013} (see also \cite[Lemma 2.1]{Wojtowytsch2021}).
All those results are improved by Theorem \ref{thm:rigidity_closed} and optimized in many cases.
Theorem \ref{thm:nonoptimality} highlights a new phenomenon compared to the original Li--Yau inequality \eqref{eq:Li-Yau_Willmore}, which instead is sharp regardless of codimension and multiplicity.

The study of the bending energy $B$ was initiated by D.\ Bernoulli and L.\ Euler in the 18th century for modelling planar elastic rods, but is still ongoing; see e.g.\ \cite{Love1944,Truesdell1983,Sachkov2008,Miura2020} and references therein.
Corresponding variational solutions are called elastic curves or elasticae, and their known classification plays a key role in our study.
Our results have direct applications to more modern subjects such as elastic flows and elastic networks.
In Section \ref{sect:elasticflow}, we apply our inequality to obtain optimal energy thresholds below which 
any elastic flow must be embedded for all time $t\geq0$, in the same manner as \cite{Mueller2021}; see Theorems \ref{thm:elasticflow} and \ref{thm:fixedlengthelasticflow}.
In Section \ref{sect:elasticnetwork} we are concerned with existence of minimal elastic $\Theta$-networks.
This problem was solved by Dall'Acqua--Novaga--Pluda in the planar case \cite[Theorem 4.10]{DallAcqua2020} (and \cite{DallAcqua2021}), see also \cite{DallAcqua2017}, but they indicate in the last paragraph of \cite[Section 4]{DallAcqua2020} that it remains open in higher codimensions.
Here we resolve this problem in Theorem \ref{thm:existence_network}.

The normalized bending energy $\bar{B}$ is a natural one-dimensional counterpart of the Willmore energy in the sense that both are scale-invariant functionals involving curvature and minimized only by a round shape.
The total curvature $TC[\gamma]:=\int_\gamma|\kappa|ds$ is also similar but not effective for detecting embeddedness of closed curves since both the infima among embedded and non-embedded closed curves coincide with $2\pi$; see \cite{Mueller2021}.
The energy $\bar{B}$ is certainly effective since a circle attains $\bar{B}=4\pi^2<4\varpi^*$.
Recall that $4\pi^2$ is the minimum of $\bar{B}$ among closed curves since $\bar{B}\geq TC^2\geq 4\pi^2$ holds by the Cauchy-Schwarz inequality and Fenchel's theorem.

We now discuss the idea of our proof.
To apply variational methods we encounter the multiplicity-constraint making the admissible set non-open.
M\"{u}ller--Rupp's proof \cite{Mueller2021} is mainly devoted to a careful analysis of possible self-intersections by using the rotation number, which has no direct extension to higher codimensions or multiplicities.
Instead, our proof proceeds in such a way that we divide the objective curve at the point of multiplicity and then apply a variational argument to each component ``independently''.
Each variational problem is formulated to be well posed, and moreover its boundary condition is relaxed to being of zeroth order (although the most natural choice would be of first order since $H^2\hookrightarrow C^1$).
This relaxation allows us to obtain a strong rigidity of optimal configurations, Proposition \ref{prop:minimizer}, which benefits from the celebrated classification of elasticae by Langer--Singer \cite{Langer1984a}.
It is somewhat by chance that such independent and relaxed problems can be translated back to the original problem (before division) while keeping certain optimality.
Indeed, nontriviality of this point is explicitly reflected in our non-optimality result, Theorem \ref{thm:nonoptimality}.
The non-optimality is caused by an obstruction for constructing optimal planar closed curves, which is related with the irrationality of a certain geometric quantity.
Although such an issue is quite delicate in general, surprisingly at least to the author, we can exhaustively verify non-optimality by reducing the problem to a classical deep result of Andr\'{e} \cite{Andre1996} on the algebraic independence of values of certain hypergeometric functions 
over the field of algebraic numbers.
The case of higher codimensions stands in stark contrast to the planar case as it allows unified optimality, Theorem \ref{thm:rigidity_closed}.
The main ingredient here is the aforementioned elastic propeller in Example \ref{ex:propeller}.

The above dividing idea motivates us to consider not only closed curves but also open curves.
In fact, we mainly deal with open curves in our proof, and obtain very parallel results to Theorems \ref{thm:Li-Yau_closed} and \ref{thm:rigidity_closed}, see Theorems \ref{thm:Li-Yau_open} and \ref{thm:rigidity_open}.
As for open curves, our results are fully optimal for all codimensions and multiplicities.

Finally, we indicate that the elastic propeller would be of particular interest in view of elastic knot theory, cf.\ \cite{Gerlach2017} and references therein.
More precisely, the elastic unknot of smallest energy is trivially the one-fold circle, but some numerical studies \cite{Avvakumov2014,Bartels2021} suggest another possible ``stable'' elastic unknot, which looks like a propeller and is experimentally reproducible by a springy wire as in Figure \ref{fig:propeller_photo}.
Our elastic propeller would give the first analytic representation of such a stable shape.
Our result already partially supports this conjectural stability as it implies minimality among all perturbations keeping the triple point.
In fact, we expect that the elastic propeller would be the stable elastic unknot of second smallest energy, partially because the figure-eight elastica is unstable in the space of a suitable closure of the trivial knot class.

\begin{figure}[htbp]
  \includegraphics[width=50mm]{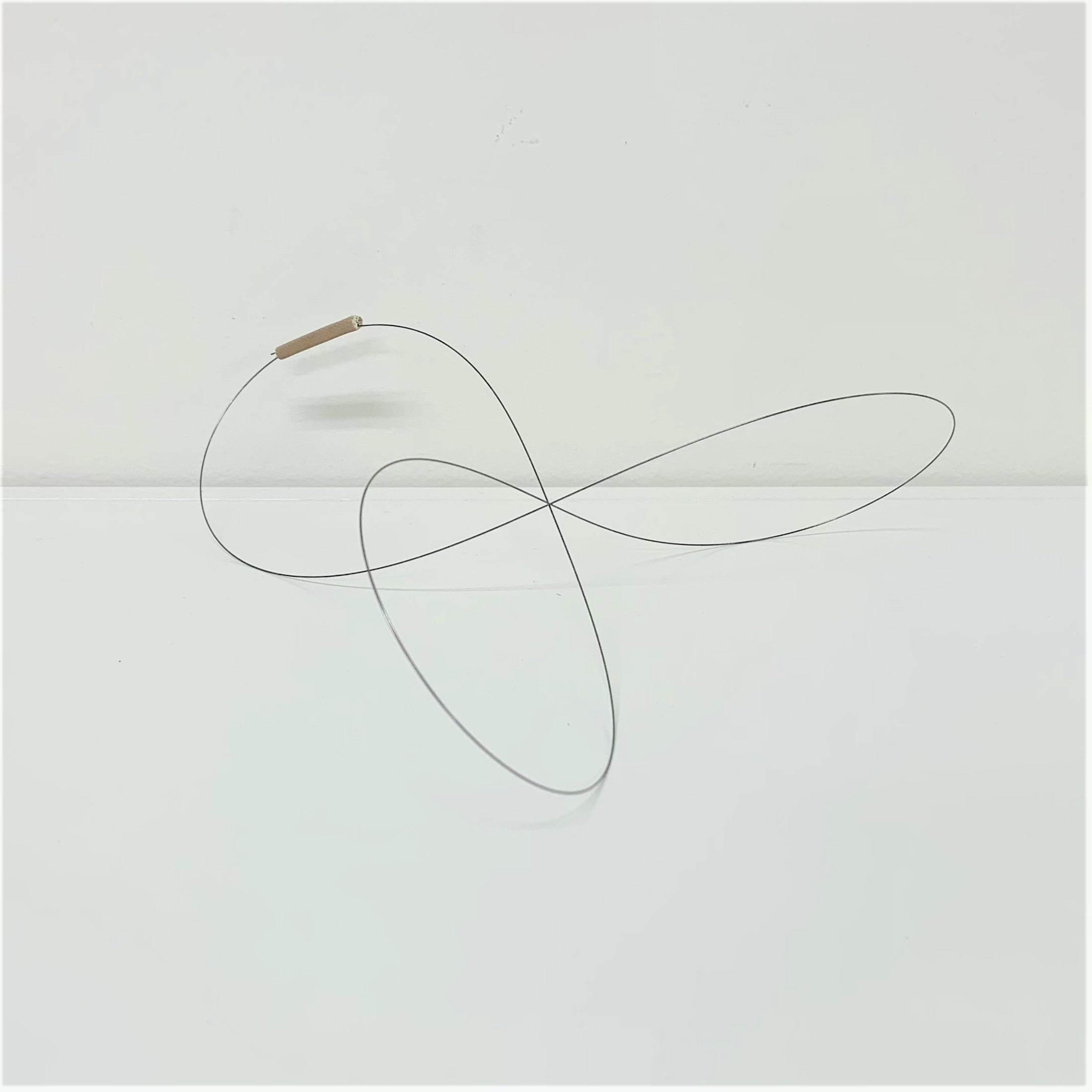}
  \qquad
  \includegraphics[width=50mm]{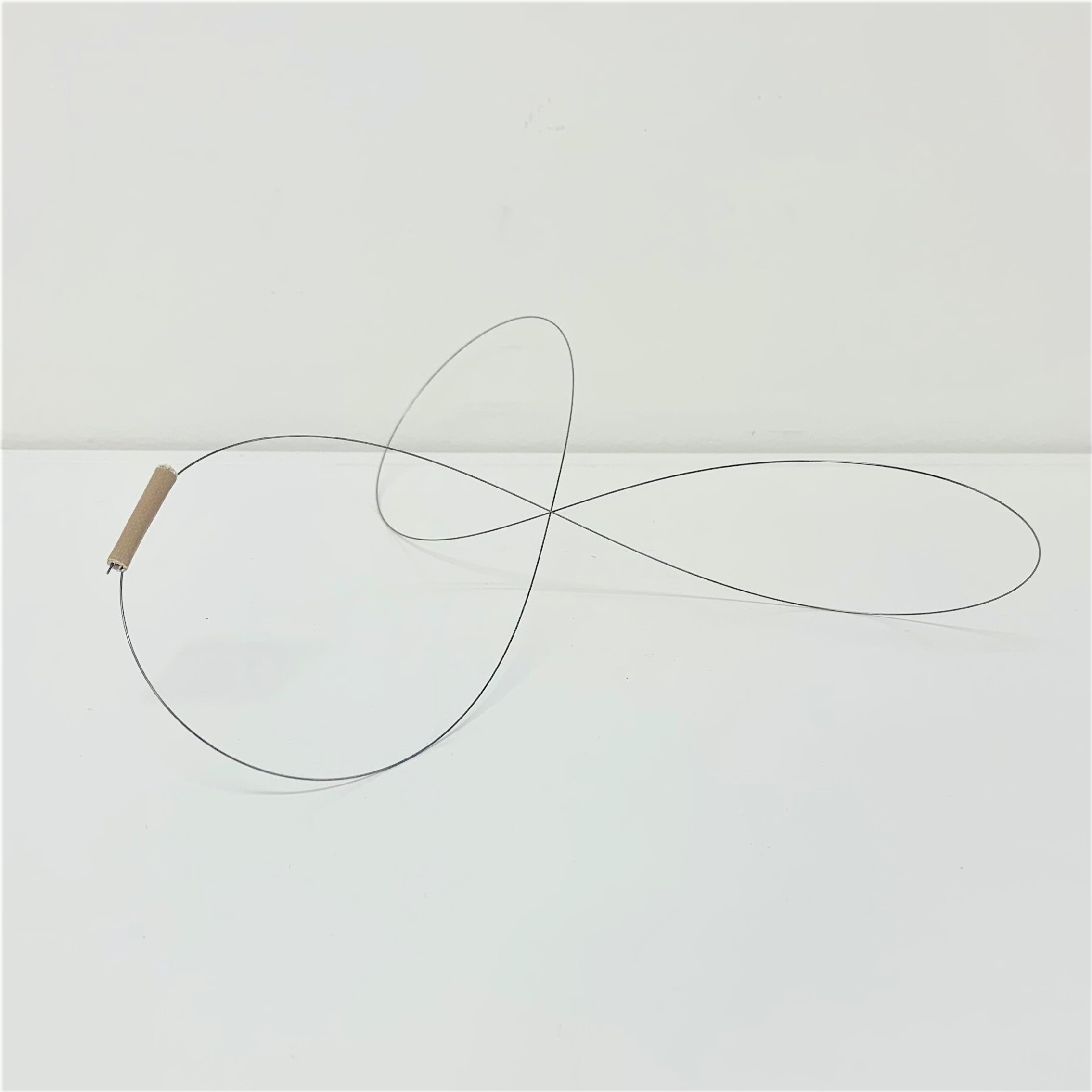}
  \caption{Propeller made of unknotted wire.}
  \label{fig:propeller_photo}
\end{figure}

This paper is organized as follows:
In Section \ref{sect:Li-Yau} we recall and discuss classical elasticae and prove Theorem \ref{thm:Li-Yau_closed} via the open-curve counterpart, Theorem \ref{thm:Li-Yau_open}.
In Section \ref{sect:rigidity} we introduce leafed elasticae and mainly discuss their rigidity, which is then applied to the proof of Theorems \ref{thm:rigidity_closed} and \ref{thm:nonoptimality} again via the open-curve counterpart, Theorem \ref{thm:rigidity_open}.
Sections \ref{sect:elasticflow} and \ref{sect:elasticnetwork} are about applications to elastic flows and elastic networks, respectively.

\begin{acknowledgements}
  The author would like to thank Marius M\"{u}ller, Fabian Rupp, and Ryotaro Sakamoto for their helpful comments and discussions.
  This work is in part supported by JSPS KAKENHI Grant Numbers 18H03670, 20K14341, and 21H00990, and by Grant for Basic Science Research Projects from The Sumitomo Foundation.
\end{acknowledgements}

\section{Elastica and Li--Yau type multiplicity inequality}\label{sect:Li-Yau}

The goal of this section is to prove Theorem \ref{thm:Li-Yau_closed}.
To this end we review and prove some results concerning classical elasticae; the most essential step is Proposition \ref{prop:minimizer}.
In particular, the so-called figure-eight elastica plays a key role throughout in this paper.
To define this we need to use some properties of elliptic integrals and functions, which we first address below for the sake of logical order.

\subsection{Elliptic integrals and functions}\label{subsect:elliptic}

Here we briefly collect some facts about Jacobi elliptic integrals and functions.
For more details see classical textbooks, e.g.\ \cite[Chapter XXII]{Whittaker1962} (and also \cite{Abramowitz1992}).

The {\em incomplete elliptic integral of the first kind} $F(x,m)$ and {\em of the second kind} $E(x,m)$ with {\em parameter} $m\in(0,1)$ (squared elliptic modulus) are defined by
\begin{equation*}
  F(x,m):=\int_0^x\frac{d\theta}{\sqrt{1-m\sin^2\theta}}, \quad E(x,m):=\int_0^x\sqrt{1-m\sin^2\theta}d\theta,
\end{equation*}
respectively.
The {\em complete elliptic integral of first kind} $K(m)$ and {\em of second kind} $E(m)$ are then defined by
$$K(m):=F(\pi/2,m), \quad E(m):=E(\pi/2,m),$$
respectively.
The {\em (Jacobi) amplitude function} is defined by
$$\am(\cdot,m):=F^{-1}(\cdot,m) \quad \text{on}\ \mathbf{R}.$$
The {\em (Jacobi) elliptic functions} are then given by
\begin{align*}
  \cn(x,m)&:=\cos(\am(x,m)),\quad \sn(x,m):=\sin(\am(x,m)).
\end{align*}
Note in particular that $\cn(\cdot,m)$ and $\sn(\cdot,m)$ are $4K(m)$-periodic, have zeroes $(2\mathbf{Z}+1)K(m)$ and $2\mathbf{Z}K(m)$, respectively, and change their sign at the zeroes (like cosine and sine).

To define a figure-eight elastica we need to define a unique parameter $m^*\in(0,1)$ such that $K(m^*)=2E(m^*)$; numerically, $m^*\approx0.82611$.
Such a parameter indeed exists uniquely since it is easy to check that the continuous function $K(m)-2E(m)$ is increasing from $-\pi/2$ to $\infty$.
In addition, one can also easily check that
\begin{equation}\label{eq:parameter0.5}
  m^*>0.5
\end{equation}
through the negativity of the integrand of $K(\frac{1}{2})-2E(\frac{1}{2})$.
This is enough sharp for our argument in this section, but later we need to improve this estimate, cf.\ Lemma \ref{lem:parameter} below.

\subsection{Classical elastica}\label{subsect:elastica}

Here and hereafter $n\geq2$ is arbitrary if not specified.
The classical Lagrange multiplier method ensures that if a smooth curve $\gamma:[a,b]\to\mathbf{R}^n$ minimizes the bending energy $B$ in a suitable class of fixed-length curves, then there is some $\lambda\in\mathbf{R}$ such that the curve $\gamma$ is a critical point of the energy
\begin{equation}\label{eq:energy_E_lambda}
  E_\lambda[\gamma]:=B[\gamma]+\lambda L[\gamma] = \int_\gamma(|\kappa|^2+\lambda)ds.
\end{equation}
By calculating the first variation of $E_\lambda$ (cf.\ \cite{Dziuk2002,DallAcqua2014}) we obtain a fourth-order ODE,
\begin{equation}\label{eq:elastica}
  2\nabla_s^2\kappa+|\kappa|^2\kappa-\lambda\kappa=0,
\end{equation}
where $\nabla_s$ denotes the normal derivative along $\gamma$; more precisely, $\nabla_s\psi:=\partial_s\psi-\langle\partial_s\psi,\partial_s\gamma\rangle \partial_s\gamma$, and here and hereafter $\langle\cdot,\cdot\rangle$ denotes the Euclidean inner product.

A smooth curve $\gamma$ that solves \eqref{eq:elastica} for some $\lambda\in\mathbf{R}$ is called {\em elastica}.
The classification of planar elasticae is essentially obtained by Euler in 1744 and thus very classically known (cf.\ \cite{Linner1996,Sachkov2008}).
Concerning general elasticae, Langer--Singer's landmark study \cite{Langer1984a} provides an exhaustive classification result (see also the excellent lecture notes \cite{Singer2008}).
Here we just collect the facts that we use later for our main theorems.
Recall that a planar curve $\gamma\subset\mathbf{R}^2$ is called {\em wavelike elastica} if there exist $m\in(0,1)$ and $s_0\in\mathbf{R}$ such that, up to dilation, the curve $\gamma$ parameterized by the arclength $s\in[0,L]$ has signed curvature $\mathrm{k}$ of the form $\mathrm{k}(s)=2\sqrt{m}\cn(s-s_0,m)$.

\begin{theorem}[Langer--Singer \cite{Langer1984a,Singer2008}]\label{thm:classificationelastica}
  The following statements hold.
  \begin{enumerate}
    \item Any elastica is contained in an at most three-dimensional affine subspace of $\mathbf{R}^n$.
    \item If an elastica is non-planar, then it has everywhere non-zero curvature (and torsion), where we call an elastica planar (resp.\ non-planar) if it is contained (resp.\ not contained) in a plane.
    \item Any planar elastica with a point of vanishing curvature is either a straight line or a wavelike elastica.
  \end{enumerate}
\end{theorem}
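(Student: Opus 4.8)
The plan is to reduce everything to the curvature ODE and classical integration. First I would observe that \eqref{eq:elastica} is the Euler--Lagrange equation of $E_\lambda$, and that it admits two conserved quantities along solutions: the energy-type invariant obtained by pairing \eqref{eq:elastica} with $\nabla_s\kappa$ (giving $|\nabla_s\kappa|^2 + \tfrac14|\kappa|^4 - \tfrac\lambda2|\kappa|^2 = \text{const}$), and a second invariant coming from the rotational symmetry. For the scalar quantity $u(s):=|\kappa(s)|^2$ one derives from \eqref{eq:elastica} a second-order autonomous ODE of the form $(u')^2 = P(u)$ for a polynomial $P$ of degree at most three (after using the conserved quantities). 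From this one reads off (i): the solution lives in the span of $\gamma$ and finitely many derivatives at a point, and a Frenet-type computation shows the higher normal curvatures beyond the second must vanish identically once $|\kappa|^2$ satisfies its autonomous equation, so the curve is confined to a $3$-dimensional affine subspace. This is the step where I would cite \cite{Langer1984a,Singer2008} rather than reproduce the full Frenet bookkeeping.

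For (ii), inside that $3$-space write the Frenet system with curvature $k>0$ and torsion $\tau$; the elastica equations decouple so that $\tau k^2 = c$ for a constant $c$ (the second conserved quantity, essentially an angular momentum). If the elastica is non-planar then $c\neq 0$, hence $k$ never vanishes (else $\tau k^2=0=c$), and then $\tau = c/k^2 \neq 0$ as well. I would present this as a short direct computation: pair \eqref{eq:elastica} against a parallel normal field and integrate, identifying $\tau k^2$ as the conserved flux; the contrapositive gives exactly statement (ii).

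For (iii), restrict to the planar case, so $\gamma$ has signed curvature $\mathrm{k}$ and \eqref{eq:elastica} becomes the scalar ODE $2\mathrm{k}'' + \mathrm{k}^3 - \lambda \mathrm{k} = 0$, with first integral $(\mathrm{k}')^2 = -\tfrac14 \mathrm{k}^4 + \tfrac\lambda2 \mathrm{k}^2 + A$ for a constant $A$. If $\mathrm{k}$ vanishes somewhere, evaluate the first integral there: $A = (\mathrm{k}'(s_0))^2 \geq 0$. If $A=0$ and $\mathrm{k}'(s_0)=0$ then $\mathrm{k}\equiv 0$ by uniqueness, giving a straight line. Otherwise $A>0$; then the quartic $-\tfrac14 q^4 + \tfrac\lambda2 q^2 + A$ has a positive and a negative root and the bounded oscillating solution through a zero is, by the standard substitution, exactly $\mathrm{k}(s) = 2\sqrt{m}\,\cn(s-s_0,m)$ after rescaling, for the appropriate $m\in(0,1)$ determined by $\lambda$ and $A$ — this is the wavelike case. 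I would match constants by plugging the $\cn$ ansatz into the first integral using $\cn' = -\sn\dn$ and $\sn^2+\cn^2 = 1$, $\dn^2 = 1-m\sn^2$.

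The main obstacle is the passage from the scalar curvature ODE back to the \emph{shape} of the curve in the non-planar and dimension-bound parts, i.e.\ genuinely proving (i) and (ii) rather than just solving for $|\kappa|^2$; doing this carefully requires the full Frenet analysis of \cite{Langer1984a}, so my write-up would invoke that reference for (i)--(ii) and give the self-contained elliptic-integral computation only for (iii), which is all that is needed downstream (the wavelike/figure-eight case).
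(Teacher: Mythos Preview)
The paper does not give a proof of Theorem~\ref{thm:classificationelastica} at all; it is stated purely as a citation to \cite{Langer1984a,Singer2008}, prefaced by ``Here we just collect the facts what we use later for our main theorems.'' So there is no proof in the paper to compare against.

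Your proposal is a correct outline of the standard argument and in fact ends up in the same place as the paper: you explicitly say you would invoke \cite{Langer1984a,Singer2008} for (i)--(ii) rather than reproduce the Frenet bookkeeping, which is exactly the paper's treatment. Your self-contained part~(iii) via the first integral $(\mathrm{k}')^2=-\tfrac14\mathrm{k}^4+\tfrac\lambda2\mathrm{k}^2+A$ is sound (the dichotomy $A=0$ versus $A>0$ correctly separates the line from the wavelike $\cn$-solution, and boundedness is forced by the sign of the leading term); this is slightly more than the paper provides here, since the paper only writes out explicit $\cn$-parametrizations later when treating figure-eight elasticae. One small point in~(ii): the implication ``non-planar $\Rightarrow c\neq0$'' needs the observation that if $c=0$ then $\tau\equiv0$ on each interval where $k>0$, and then analyticity of elasticae propagates planarity globally; you may want to make that step explicit if you write it up.
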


In addition, a {\em figure-eight elastica} is defined to be a wavelike elastica with parameter $m=m^*$.
A key fact is that a figure-eight elastica is characterized by a unique wavelike elastica satisfying a certain Navier boundary condition.

\begin{lemma}\label{lem:Navier}
  Suppose that a wavelike elastica $\gamma:[a,b]\to\mathbf{R}^2$ satisfies the Navier boundary condition that $\gamma(a)=\gamma(b)$ and $\gamma''(a)=\gamma''(b)=0$.
  Then there is a positive integer $N$ such that $\gamma$ is an $\frac{N}{2}$-fold figure-eight elastica (cf.\ Definition \ref{def:figureeight}).
\end{lemma}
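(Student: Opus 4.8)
The plan is to exploit the explicit form of the signed curvature of a wavelike elastica, $\mathrm{k}(s)=2\sqrt{m}\cn(s-s_0,m)$ (after a rescaling), and to use the Navier condition to pin down both the phase $s_0$, the parameter $m$, and the admissible lengths $b-a$. First I would use the condition $\gamma''(a)=\gamma''(b)=0$: since $\gamma$ is parametrized by arclength, $\gamma''=\kappa$ is the curvature vector, whose length is $|\mathrm{k}|$, so $\gamma''$ vanishes exactly where $\mathrm{k}$ vanishes, i.e.\ where $\cn(s-s_0,m)=0$, namely at $s-s_0\in(2\mathbf{Z}+1)K(m)$. Hence both endpoints $a$ and $b$ must be zeros of $\cn(\cdot-s_0,m)$; after translating so that $a-s_0=K(m)$ (or more symmetrically centering), the length $\ell:=b-a$ must be an integer multiple of the distance $2K(m)$ between consecutive zeros, say $\ell=2NK(m)$ for some positive integer $N$ — equivalently $\gamma$ consists of $N$ congruent ``half-fold figure-eight'' arcs once we identify the period structure. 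So the zeroth-order part of the Navier condition is automatically compatible with any such $N$.

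The real content is the closure condition $\gamma(a)=\gamma(b)$, which is what forces $m=m^*$. The plan is to integrate the Frenet-type relations to get $\gamma(b)-\gamma(a)$ explicitly in terms of elliptic integrals. Writing the tangent angle $\theta(s)=\int_{a}^{s}\mathrm{k}$, one has $\gamma(b)-\gamma(a)=\int_a^b(\cos\theta,\sin\theta)\,ds$. Using $\mathrm{k}=2\sqrt m\,\cn$ and the standard substitution $u=\am(s-s_0,m)$, one finds $\theta$ (up to additive constant) satisfies $\sin(\theta/2)=\sqrt m\,\sn(s-s_0,m)$ or the like, which is the classical integration of the pendulum equation; in particular $\cos\theta=1-2m\,\sn^2(s-s_0,m)$. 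Then the displacement vector, integrated over one period $2K(m)$ and using the symmetry of $\sn^2$, has vanishing ``$\sin$'' component by oddness, and its ``$\cos$'' component over one half-period equals a multiple of $\bigl(2E(m)-K(m)\bigr)$ (this is exactly where $E$ and $K$ enter, via $\int_0^{2K}\sn^2 = \frac{2}{m}(K-E)$ and the identity $\int_0^{2K}(1-2m\,\sn^2)=2(2E-K)$, up to sign conventions). Summing over the $N$ arcs, closure $\gamma(a)=\gamma(b)$ becomes $N\cdot c\cdot(2E(m)-K(m))=0$ for a nonzero constant $c$; since $N\ge1$, this forces $2E(m)=K(m)$, i.e.\ $m=m^*$ by the uniqueness established in Subsection~\ref{subsect:elliptic}. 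Once $m=m^*$ is known, $\gamma$ is by definition a figure-eight elastica, and the length being $2NK(m^*)$ identifies it as an $\frac{N}{2}$-fold one in the sense of Definition~\ref{def:figureeight}, with the integer in the statement being this $N$.

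I expect the main obstacle to be bookkeeping the symmetry and sign conventions carefully enough to be sure that the constant $c$ multiplying $2E(m)-K(m)$ is genuinely nonzero, and that no additional identifications (e.g.\ half-period versus full-period arcs, or an even/odd $N$ subtlety coming from whether consecutive ``lobes'' of the figure-eight point the same way) secretly allow $2E(m)\ne K(m)$. Concretely, one must check that the per-period displacement does not vanish identically for some other reason — it does not, because the ``$\cos$'' integrand $1-2m\,\sn^2$ has mean $\frac{1}{K}(2E-K)$ which is negative for small $m$ and positive for $m$ near $1$, so it is generically nonzero and vanishes precisely at $m^*$. A secondary technical point is to justify the reduction ``up to translation in $s$ and rigid motion in $\mathbf{R}^2$'' that lets me place the first zero of $\cn$ at a convenient spot; this is harmless since both hypotheses and conclusion are invariant under reparametrization and isometries. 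Everything else — the substitution $u=\am$, the evaluation of $\int\sn^2$, and matching with Definition~\ref{def:figureeight} — is routine elliptic-function calculus.
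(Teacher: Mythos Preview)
Your proposal is correct and follows essentially the same route as the paper. The only cosmetic difference is that the paper quotes a ready-made explicit parameterization of wavelike elasticae (with first coordinate $2E(\am(s,m),m)-s$ and second $-2\sqrt{m}\cn(s,m)$), so that the displacement between consecutive curvature zeros is read off directly as a multiple of $2E(m)-K(m)$; you instead re-derive this displacement by integrating $(\cos\theta,\sin\theta)$ via the pendulum relation $\cos\theta=1-2m\,\sn^2$, which is exactly the computation that produces that parameterization in the first place.
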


This lemma follows if one checks the fact that a figure-eight elastica is the only wavelike elastica that has a self-intersection at their inflection points (where curvature $\mathrm{k}$ changes the sign), recalling that explicit parametrizations of wavelike elasticae are classically known (cf.\ \cite[Chapter XIX, Art.\ 263]{Love1944}).
Here we give a complete argument, where one may refer to \cite{DallAcqua2017,Mueller2021} for precise derivations of the parameterizations via a dynamical system.
Before that, we define the term ``$\frac{N}{2}$-fold'' more precisely (in a general codimension for later use).

\begin{definition}[$\frac{N}{2}$-fold figure-eight elastica]\label{def:figureeight}
  Given a positive integer $N$, we call an immersed curve $\gamma:[a,b]\to\mathbf{R}^n$ {\em $\frac{N}{2}$-fold figure-eight elastica} if $\gamma$ is contained in a plane and its signed curvature $\mathrm{k}(s)$ parameterized by the arclength $s\in[0,L]$ satisfies (up to the choice of the sign) that
  \begin{equation}\label{eq:figureeightcurvature}
    \mathrm{k}(s)=2\sqrt{m^*}\Lambda\cn\left(\Lambda s-K(m^*),m^*\right), \quad \Lambda:=\frac{2K(m^*)N}{L}>0.
  \end{equation}
  Similarly, we call a closed $H^2$-curve $\gamma:\mathbf{T}^1\to\mathbf{R}^n$ {\em $N$-fold closed figure-eight elastica} if there is $t_0\in\mathbf{T}^1$ such that the curve $\tilde{\gamma}:[0,1]\to\mathbf{R}^n$ defined by $\tilde{\gamma}(t):=\gamma(t+t_0)$ is an $N$-fold figure-eight elastica.
\end{definition}

\begin{remark}
  We also call a $\frac{1}{2}$-fold (resp.\ $1$-fold closed) figure-eight elastica {\em half-fold figure-eight elastica} (resp.\ {\em one-fold figure-eight elastica}, or simply {\em figure-eight elastica}).
  Note that in Definition \ref{def:figureeight} the constant $\Lambda$ just plays the role of a scaling factor; namely, if we let the curve $\gamma$ represent the case of $\Lambda=1$, then the general (arclength parameterized) curve $\gamma_\Lambda$ is represented by $\gamma_\Lambda(s)=\frac{1}{\Lambda}\gamma(\Lambda s)$.
\end{remark}

\begin{proof}[Proof of Lemma \ref{lem:Navier}]
  As is derived in \cite[(6.9), (6.10)]{DallAcqua2017}, up to similarity and reparameterization, any wavelike elastica is represented by (a restriction of)
  \begin{equation}\label{eq:wavelike}
    \gamma_\mathrm{wave}^m(s)= \left(
    \begin{array}{c}
      2E(\am(s,m),m)-s\\
      -2\sqrt{m}\cn(s,m) \\
    \end{array}
  \right), \quad \mathrm{k}(s)=2\sqrt{m}\cn(s,m).
  \end{equation}
  Then the change of variables $s=F(x,m)$ yields the simpler representations that
  \begin{equation}\label{eq:wavelike2}
    \tilde{\gamma}_\mathrm{wave}^m(x)= \left(
    \begin{array}{c}
      2E(x,m)-F(x,m)\\
      -2\sqrt{m}\cos{x}\\
    \end{array}
  \right), \quad \tilde{\mathrm{k}}(x)=2\sqrt{m}\cos{x},
  \end{equation}
  so that any zero of $\tilde{\mathrm{k}}(x)$ is of the form $x=p\pi+\frac{\pi}{2}$ with $p\in\mathbf{Z}$.
  Hence, if a wavelike elastica satisfies the Navier boundary condition, then by \eqref{eq:wavelike2} it is necessary that there are some integers $p_1<p_2$ such that $2E(p_1\pi+\frac{\pi}{2},m)-F(p_1\pi+\frac{\pi}{2},m)=2E(p_2\pi+\frac{\pi}{2},m)-F(p_2\pi+\frac{\pi}{2},m)$.
  By periodicity of $E(\cdot,m)$ and $F(\cdot,m)$, we need to have $2E(m)-K(m)=0$, i.e., $m=m^*$.
  Since in addition the Navier boundary condition implies that the curvature has to be zero at the endpoints, we obtain representation \eqref{eq:figureeightcurvature}.
\end{proof}

We summarize a few more basic properties of $\frac{N}{2}$-fold figure-eight elasticae that we use in this paper, cf.\ Figure \ref{fig:figureeight}.

\begin{figure}[htbp]
  \includegraphics[width=40mm]{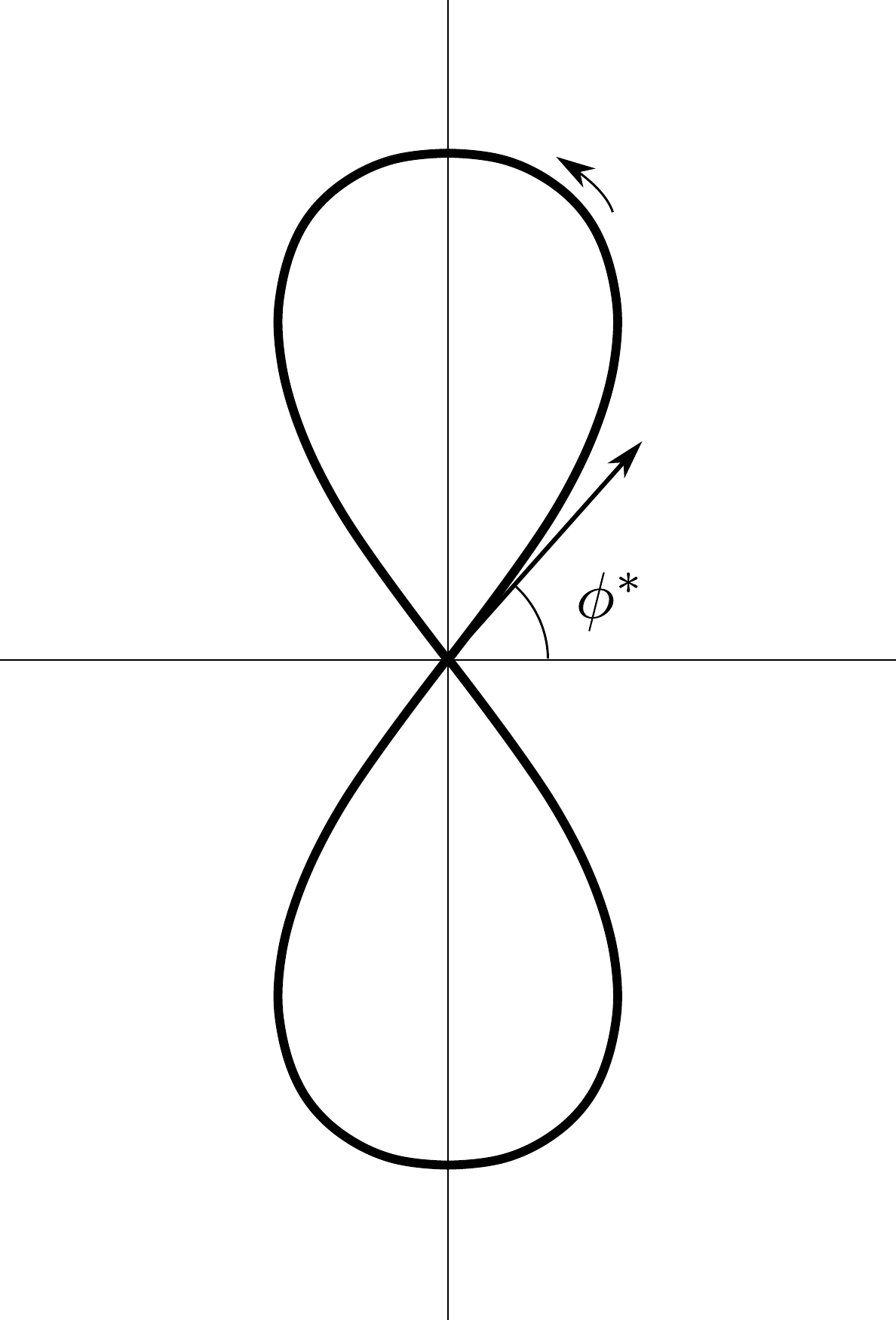}
  \caption{Figure-eight elastica.}
  \label{fig:figureeight}
\end{figure}

\begin{lemma}[Basic properties of figure-eight elasticae]\label{lem:figureeight}
  Let $\gamma$ be an $\frac{N}{2}$-fold figure-eight elastica in $\mathbf{R}^n$.
  Then, up to reparameterization and up to similarities, $\gamma$ is contained in the plane $\mathbf{R}^2\simeq\mathbf{R}^2\times\{0\}\subset\mathbf{R}^n$ and its arclength parameterization $\gamma:[0,2NK(m^*)]\to\mathbf{R}^2$ is given by $\gamma(s)= -\gamma^{m^*}_\mathrm{wave}(s-K(m^*))$, that is,
  \begin{equation}\label{eq:figureeightcurve}
    \gamma(s)= \left(
    \begin{array}{c}
      -2E(\am( s-K(m^*),m^*),m^*) + ( s -K(m^*))\\
      2\sqrt{m^*}\cn(s-K(m^*),m^*) \\
    \end{array}
  \right).
  \end{equation}
  In addition, the above representation satisfies the following properties.
  \begin{enumerate}
    \item The curve $\gamma$ passes through the origin if and only if $s=0,2K(m^*),\dots,2NK(m^*)$.
    In addition, these points also characterize those where the signed curvature $\mathrm{k}(s)=2\sqrt{m^*}\cn(s-K(m^*),m^*)$ vanishes.
    \item The curve $\gamma$ possesses the symmetry that
    $$\gamma(2K(m^*)-s)=P_1\gamma(s) \quad \text{for}\ s\in[0,K(m^*)],$$
    and the periodicity that
    $$\gamma(s+2K(m^*))=P_2\gamma(s) \quad \text{for}\ s\in[0,2(N-1)K(m^*)],$$
    where $P_i$ denotes the reflection with respect to the $i$-th component $p\mapsto p-2\langle p,e_i \rangle e_i$.
    \item The normalized bending energy is given by
    $$\bar{B}[\gamma]=\varpi^*N^2.$$
    In particular, $\varpi^*$ and $4\varpi^*$ are the normalized bending energy of a half-fold and one-fold figure-eight elastica, respectively.
    \item Let $\phi^*\in(0,\pi/2)$ be the unique angle such that $\cos\phi^*=2m^*-1>0$, cf.\ \eqref{eq:parameter0.5}.
    Then
    $$\langle\gamma'(0),\gamma'(2K(m^*))\rangle = \cos2\phi^*.$$
  \end{enumerate}
\end{lemma}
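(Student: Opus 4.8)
The plan is to verify all four items directly from the explicit parameterization \eqref{eq:figureeightcurve}, reducing everything to elementary properties of the Jacobi elliptic functions and the defining relation $K(m^*)=2E(m^*)$. First I would establish that \eqref{eq:figureeightcurve} indeed follows from Definition \ref{def:figureeight}: integrating the curvature prescription \eqref{eq:figureeightcurvature} (with $\Lambda=1$, the general case obtained by the scaling $\gamma_\Lambda(s)=\frac1\Lambda\gamma(\Lambda s)$ noted in the remark), one recovers the tangent angle, hence the curve up to a rigid motion, and matches it with $P_2P_1\gamma^{m^*}_{\mathrm{wave}}(s-K(m^*))$ using \eqref{eq:wavelike}; the shift by $K(m^*)$ is exactly what places an inflection point (a zero of $\cn$) at $s=0$, and the reflections $P_1,P_2$ normalize orientation. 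For item (i), the zeros of $s\mapsto\cn(s-K(m^*),m^*)$ are precisely $s\in 2\mathbf{Z}K(m^*)$ since $\cn(\cdot,m)$ vanishes on $(2\mathbf{Z}+1)K(m)$; intersecting with $[0,2NK(m^*)]$ gives $s=0,2K(m^*),\dots,2NK(m^*)$. That these are also the only points where $\gamma(s)$ equals the origin then needs the first component to vanish there and nowhere else: at $s=2jK(m^*)$ one uses $\am(2jK(m^*)-K(m^*),m^*)=\am((2j-1)K(m^*),m^*)$ and the quasi-periodicity $E(\am(x+2K),m)=E(\am(x),m)+2E(m)$ together with $E((2j-1)K/?,\cdot)$, so that $-2E(\cdots)+(s-K(m^*))$ telescopes to $-2jE(m^*)\cdot(\text{sign pattern})+(2j-1)K(m^*)$, which is $0$ precisely because $K(m^*)=2E(m^*)$; for $s$ strictly between consecutive such points, the first component is strictly monotone on each period (its derivative is $-(2\dn^2-1)=1-2\dn^2$, which does not change sign over a half-period once $m^*>1/2$, by \eqref{eq:parameter0.5}), so it cannot return to $0$.

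For item (ii), the symmetry $\gamma(2K(m^*)-s)=P_1\gamma(s)$ and periodicity $\gamma(s+2K(m^*))=P_2\gamma(s)$ follow from the parity/translation identities for $\cn$, $\am$, $E$: namely $\cn$ is even and $4K$-periodic with $\cn(2K-x)=-\cn(x)$, $\cn(x+2K)=-\cn(x)$, while $\am(2K-x,m)=\pi-\am(x,m)$ and $\am(x+2K,m)=\pi+\am(x,m)$, and $E(\pi-\varphi,m)=2E(m)-E(\varphi,m)$, $E(\varphi+\pi,m)=E(\varphi,m)+2E(m)$. Plugging these into the two components of \eqref{eq:figureeightcurve} shows the first component is invariant (resp.\ shifted) and the second picks up a sign, which is exactly the action of $P_1$ (resp.\ $P_2$); here again $K(m^*)=2E(m^*)$ is what kills the linear drift term so that $P_2$ (pure reflection, no translation) results. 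Item (iii) is a direct computation: $\bar{B}=L\int_0^L \mathrm{k}(s)^2\,ds$ with $L=2NK(m^*)$ and $\int_0^{2NK}4m^*\cn^2(s-K,m^*)\,ds = 4m^*\cdot 2N\!\int_0^{2K}\!\cn^2 = 4m^*\cdot 2N\cdot \frac{2}{m^*}\big(E(m^*)-(1-m^*)K(m^*)\big)$ via the standard antiderivative $\int\cn^2 = \frac1m(E(\am)-(1-m)x)$; substituting $K(m^*)=2E(m^*)$ collapses this to $16N(2m^*-1)E(m^*)$, so $\bar B = 2NK(m^*)\cdot 16N(2m^*-1)E(m^*)/? = 32N^2(2m^*-1)E(m^*)^2=\varpi^*N^2$ by \eqref{eq:constant}. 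Finally for item (iv), one computes $\gamma'(0)$ and $\gamma'(2K(m^*))$ from \eqref{eq:figureeightcurve}: differentiating gives $\gamma'(s)=(1-2\dn^2(s-K(m^*),m^*),\,-2\sqrt{m^*}\,\mathrm{sn}(s-K(m^*),m^*)\,\dn(s-K(m^*),m^*))/?$—more precisely the first component derivative is $1-2\dn^2(s-K,m^*)$ and the second is $-2\sqrt{m^*}\,\partial_s\cn = 2\sqrt{m^*}\,\mathrm{sn}\,\dn$—evaluated at the inflection points $s=0$ and $s=2K(m^*)$ where $\cn=0$, so $\dn^2 = 1-m^*$ and $\mathrm{sn}^2=1$. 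Hence $\gamma'(0)=(2m^*-1,\pm 2\sqrt{m^*(1-m^*)})$ and $\gamma'(2K(m^*))=(2m^*-1,\mp 2\sqrt{m^*(1-m^*)})$ (opposite sign of the second component, by the $P_2$-periodicity of item (ii), since reflecting in $e_2$ flips the velocity's second coordinate). These are unit vectors, and their inner product is $(2m^*-1)^2-4m^*(1-m^*)=(2m^*-1)^2-(1-(2m^*-1)^2)=2(2m^*-1)^2-1=2\cos^2\phi^*-1=\cos 2\phi^*$ with $\cos\phi^*:=2m^*-1$, which is in $(0,1)$ by \eqref{eq:parameter0.5}, so $\phi^*\in(0,\pi/2)$ is well defined.

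The routine elliptic-function bookkeeping is unremarkable; the only point requiring genuine care—and the main obstacle—is the repeated, essential use of the relation $K(m^*)=2E(m^*)$ to cancel the secular (linear-in-$s$) term in the first component of \eqref{eq:figureeightcurve}. This is precisely what forces the parameter to be $m^*$ rather than a generic $m$, what makes the curve close up into a genuine figure-eight with the $P_2$-periodicity being a \emph{reflection} (not a glide), and what produces the clean evaluation $\bar B=\varpi^* N^2$; getting the discrete quasi-periodicity constants of $E(\am(\cdot,m),m)$ exactly right at the points $s=2jK(m^*)$, with the correct alternating sign pattern inherited from $P_2$, is the step where one must be most attentive.
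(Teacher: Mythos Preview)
Your proposal is correct and follows essentially the same route as the paper: direct verification from the explicit parameterization \eqref{eq:figureeightcurve} via standard elliptic-function identities, with the relation $K(m^*)=2E(m^*)$ doing the essential work at each step.

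One small slip worth noting: in item (i), your monotonicity claim for the first component is false---at $s=K(m^*)$ one has $\dn(0,m^*)=1$ so the derivative $1-2\dn^2$ equals $-1<0$, while at $s=0$ and $s=2K(m^*)$ it equals $2m^*-1>0$, so it does change sign on the half-period. But the argument is also unnecessary: the second component $2\sqrt{m^*}\cn(s-K(m^*),m^*)$ alone already forces $\gamma(s)=0$ to occur only at $s\in 2\mathbf{Z}K(m^*)$, and then you need merely check (as you do) that the first component vanishes there too via $K(m^*)=2E(m^*)$. This is how the paper handles it as well.
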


\begin{proof}
  Curve representation \eqref{eq:figureeightcurve} directly follows by Definition \ref{def:figureeight} with rescaling $\Lambda=1$ and by representation \eqref{eq:wavelike} with parameter-shifting $\gamma(s)\mapsto \gamma(s-K(m^*))$ and $\pi$-angle rotation $\gamma\mapsto -\gamma$.
  (Our representation is chosen so that $\gamma(0)$ is the origin and $\gamma'(0)$ is contained in the first quadrant, i.e., $(\gamma^1)'(0)>0$ and $(\gamma^2)'(0)>0$, where $\gamma^i$ denotes the $i$-th component; see below for the proof.)

  Property (i) follows by definition of $m^*$ and by periodicity of elliptic functions and integrals.
  Indeed, thanks to the fact that $\cn(-K(m),m)=0$ and the particular property that $-2E(\am(-K(m^*),m^*),m^*)-K(m^*)= 2E(m^*)-K(m^*)=0$, the curve $\gamma$ passes through the origin at $s=0$ and hence, by periodicity, also at $s=2K(m^*),\dots,2NK(m^*)$.
  Those points also characterize the zeroes of curvature, cf.\ \eqref{eq:figureeightcurvature}.

  Property (ii) directly follows by symmetry and periodicity of the functions involved in representation \eqref{eq:figureeightcurve}.

  We prove property (iii).
  By periodicity we may only compute the case of $N=1$.
  Note that $L[\gamma] = 2K(m^*)$.
  In addition, by representation \eqref{eq:figureeightcurvature} with $N=1$, and by even-symmetry of the function $\cn(\cdot,m^*)$, we compute
  \begin{align*}
    B[\gamma] &= \int_{0}^{2K(m^*)}\big( 2\sqrt{m^*}\cn(s-K(m^*),m^*) \big)^2ds\\
    &= 8m^*\int_{0}^{K(m^*)}\cn^2(s,m^*)ds \\
    &= 8m^* \int_{0}^{\pi/2} \frac{\cos^2{x}}{\sqrt{1-m^*\sin^2{x}}} dx \quad (s=F(x,m^*))\\
    &= 8\big(E(m^*)-(1-m^*)K(m^*)\big).
  \end{align*}
  Since $2E(m^*)=K(m^*)$, we see that $\bar{B}[\gamma]=L[\gamma]B[\gamma]=\varpi^*$, cf.\ \eqref{eq:constant}.

  Before proving property (iv) we first compute the tangent vector at the origin, in particular confirming that $(\gamma^1)'(0)>0$ and $(\gamma^2)'(0)>0$.
  By a similar change of variables as in \eqref{eq:wavelike2}, we deduce that, for a unique $c>0$ (making $|\gamma'(0)|=1$),
  $$(\gamma^1)'(0)=c\tfrac{\partial}{\partial x}(-2E(x,m^*)+F(x,m^*))|_{x=-\frac{\pi}{2}}, \ (\gamma^2)'(0)=c\tfrac{\partial}{\partial x}(2\sqrt{m^*}\cos{x})|_{x=-\frac{\pi}{2}}.$$ 
  Direct computations imply that $(\gamma^1)'(0)=c\frac{2m^*-1}{\sqrt{1-m^*}}$ and $(\gamma^2)'(0)=c2\sqrt{m^*}$.
  By taking the explicit $c=\sqrt{1-m^*}$ we deduce that
  $$(\gamma^1)'(0)=2m^*-1>0, \quad (\gamma^2)'(0)=2\sqrt{m^*}\sqrt{1-m^*}>0,$$
  where the former positivity relies on the fact that $m^*>0.5$, cf. \eqref{eq:parameter0.5}.
  Now we prove (iv).
  By periodicity in (ii), it is now sufficient to prove that the tangent vector $\gamma'(0)$ and the vector $e_1=(1,0)^\top$ make the angle $\phi^*$, or equivalently, that $(\gamma^1)'(0)=\cos\phi^*$.
  The above computation of $(\gamma^1)'(0)=2m^*-1$ and the definition that $\cos\phi^*=2m^*-1$ complete the proof.
  (See also \cite{Djondjorov2008} for a different derivation of the angle.)
\end{proof}

We now state a key proposition for our main theorems, which is formulated in terms of a minimizing problem of the bending energy subject to a zeroth-order boundary condition.
Hereafter we mainly deal with open curves with $H^2$ ($=W^{2,2}$) Sobolev regularity, where this regularity is natural in view of the bending energy.
Note that by Sobolev embedding $H^2\hookrightarrow C^1$ such curves still possess pointwise meaning up to first order; in particular, both immersedness and multiplicity are well defined.

\begin{proposition}[Minimality of half-fold figure-eight elasticae]\label{prop:minimizer}
  Let $\gamma:[a,b]\to\mathbf{R}^n$ be an immersed $H^2$-curve such that $\gamma(a)=\gamma(b)$.
  Then
  $$\bar{B}[\gamma]\geq\varpi^*,$$
  where equality is attained if and only if $\gamma$ is a half-fold figure-eight elastica.
\end{proposition}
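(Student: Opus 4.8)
The plan is to run the direct method on the scale-invariant functional $\bar B$ over the admissible set $\mathcal{A}:=\{\gamma\in H^2([a,b],\mathbf{R}^n): \gamma \text{ immersed}, \gamma(a)=\gamma(b)\}$, and then to identify minimizers via the Euler--Lagrange equation together with Langer--Singer's classification. First I would reduce to a normalized setting: since $\bar B$ is invariant under dilation and reparametrization, I may assume all competitors have unit length and are parametrized by arclength on $[0,1]$; then minimizing $\bar B$ is the same as minimizing $B[\gamma]=\int_0^1|\kappa|^2\,ds$ subject to $\gamma(0)=\gamma(1)$ and $|\gamma'|\equiv 1$. A minimizing sequence $\gamma_j$ has $\int|\gamma_j''|^2$ bounded, hence is bounded in $H^2$ after fixing, say, $\gamma_j(0)=0$; so a subsequence converges weakly in $H^2$ and strongly in $C^1$ to some $\gamma_\infty$. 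The $C^1$-convergence preserves the closure condition $\gamma_\infty(0)=\gamma_\infty(1)$ and unit speed $|\gamma_\infty'|\equiv 1$ (so $\gamma_\infty$ is immersed), and weak lower semicontinuity of $\gamma\mapsto\int|\gamma''|^2$ gives $B[\gamma_\infty]\le\liminf B[\gamma_j]$. Hence a minimizer $\gamma_\infty$ exists. A preliminary bound $\bar B\ge \varpi^*$ is then delivered by the Euler--Lagrange analysis below; alternatively, to get that the infimum is finite and positive one just exhibits one competitor (e.g.\ the half-fold figure-eight itself, whose energy is $\varpi^*$ by Lemma \ref{lem:figureeight}(iii)).

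Next I would derive the Euler--Lagrange equation. The constraint $\gamma(a)=\gamma(b)$ is of zeroth order only, and there is no first-order (clamped) condition, so when computing the first variation of $E_\lambda=B+\lambda L$ the boundary terms force the natural boundary conditions $\kappa(a)=\kappa(b)=0$ (vanishing curvature at both ends) in addition to the interior elastica equation \eqref{eq:elastica}; the length multiplier $\lambda$ appears because unit speed (equivalently fixed length under the scaling normalization) is being imposed. So the minimizer is an elastica with $\kappa(a)=\kappa(b)=0$ and, from the periodicity $\gamma(a)=\gamma(b)$, it satisfies the Navier boundary condition of Lemma \ref{lem:Navier}. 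Since $\kappa$ vanishes somewhere, part (ii) of Theorem \ref{thm:classificationelastica} rules out the non-planar case (a non-planar elastica has nowhere-vanishing curvature), so the minimizer is planar; part (iii) then says it is either a straight segment or a wavelike elastica. A straight segment cannot satisfy $\gamma(a)=\gamma(b)$ while being immersed, so it is wavelike, and Lemma \ref{lem:Navier} forces it to be an $\tfrac N2$-fold figure-eight elastica for some positive integer $N$. By Lemma \ref{lem:figureeight}(iii) its normalized energy is $\varpi^* N^2\ge \varpi^*$, with equality iff $N=1$, i.e.\ iff $\gamma$ is a half-fold figure-eight elastica. Combining, $\bar B[\gamma]\ge\bar B[\gamma_\infty]=\varpi^*$ for every admissible $\gamma$, and the equality case is exactly the half-fold figure-eight elastica.

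One technical subtlety I would address carefully: the minimizer is a priori only $H^2$, so before invoking the smooth elastica theory I should argue elliptic regularity, namely that any $H^2$ critical point of $E_\lambda$ under these boundary conditions is smooth. This is standard bootstrapping from the weak form of \eqref{eq:elastica} (the fourth-order equation is uniformly elliptic in arclength gauge and the nonlinearity is smooth in lower-order quantities that already lie in $C^1$), but it must be stated. A second subtlety is that I must confirm that the converged minimizer $\gamma_\infty$ is a genuine constrained critical point — i.e.\ that the Lagrange-multiplier rule applies, which needs the constraint map $\gamma\mapsto\gamma(b)-\gamma(a)$ (together with the arclength constraint) to be a submersion at $\gamma_\infty$; this is where having relaxed to a zeroth-order boundary condition helps, since translations in $\mathbf{R}^n$ already span the needed directions. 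The main obstacle, I expect, is precisely this interplay between the low regularity of minimizers, the validity of the multiplier rule with the degenerate-looking constraint set, and the passage from ``critical point'' to ``smooth elastica'' so that Theorem \ref{thm:classificationelastica} and Lemma \ref{lem:Navier} become applicable; once that bridge is built, the classification does the rest essentially for free.
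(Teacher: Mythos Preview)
Your proposal is correct and follows essentially the same route as the paper: normalize to unit-speed on $[0,1]$, apply the direct method to get a minimizer, use the Lagrange multiplier rule and bootstrap to smoothness, read off the natural Navier conditions $\kappa=0$ at the endpoints, and then invoke Theorem~\ref{thm:classificationelastica} together with Lemma~\ref{lem:Navier} and Lemma~\ref{lem:figureeight}(iii) to pin down $N=1$. The paper handles the two subtleties you flag in the same spirit (citing a standard Lagrange-multiplier result and a bootstrap), so there is no meaningful difference in strategy.
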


As is already emphasized in the introduction, the choice of this zeroth-order boundary condition is a key idea in our strategy.
Since Proposition \ref{prop:minimizer} is later used for each part of the original curve after division at a multiplicity point, it seems natural to choose an up-to-first-order (clamped) boundary condition in view of $H^2\hookrightarrow C^1$.
However for such a condition the minimum value sensitively depends on a first-order quantity so that there remains an additional issue of complicated energy competition.
Instead, here we first relax the boundary condition to deduce a geometrically unique minimizer, and then (in Section \ref{sect:rigidity}) consider whether a collection of such minimizers can be applied to the original problem.

For convenience we introduce a class of unit-speed curves.
Let $I:=(0,1)$ and $\bar{I}:=[0,1]$.
Let $X$ be the class of all unit-speed curves $\gamma\in H^2(I;\mathbf{R}^n)\hookrightarrow C^1(\bar{I};\mathbf{R}^n)$, that is,
$$X:=\{\gamma\in H^2(I;\mathbf{R}^n) \mid |\gamma'|\equiv1 \}.$$
Note that for any $\gamma\in X$ we have $L[\gamma]=1$ and thus $\bar{B}[\gamma]=B[\gamma]$.
This setting of arclength parameterization does not lose generality thanks to the invariance of our problem up to rescaling and reparameterization.
Finally, let
$$X_0:=\{\gamma\in X \mid \gamma(0)=\gamma(1)=0\}.$$

\begin{proof}[Proof of Proposition \ref{prop:minimizer}]
  Up to similarity and reparameterization, we may only argue within the class $X_0$.
  Thus, it is sufficient to consider the (unnormalized) bending energy $B$ and prove the following properties: There exists $\bar{\gamma}\in X_0$ such that $B[\bar\gamma]=\inf_{X_0}B$, such a minimizer $\bar{\gamma}$ must be a half-fold figure-eight elastica, and $B[\bar{\gamma}]=\varpi^*$.

  The existence of a minimizer follows from the standard direct method, which we demonstrate here for the reader's convenience (and for using a similar argument later).
  Let $\{\gamma_j\}\subset X_0$ be a minimizing sequence $B[\gamma_j]\to\inf_{X_0}B$.
  Then, combining this limit with the fact that $\gamma_j(0)=0$ ($=\gamma_j(1)$) and $|\gamma_j'|\equiv1$, we find that $\{\gamma_j\}$ is bounded in $H^2(I;\mathbf{R}^n)$ so that there is a subsequence (without relabeling) that converges in the senses of $H^2$-weak and $C^1$.
  The limit curve $\bar\gamma$ is thus a unit-speed curve in $H^2(I;\mathbf{R}^n)$ such that $\bar\gamma(0)=\bar\gamma(1)=0$, i.e., $\bar\gamma\in X_0$, and the weak lower semicontinuity ensures that
  $$\inf_{X_0}B=\liminf_{j\to\infty}B[\gamma_j]\geq B[\bar\gamma].$$
  This means that $\bar\gamma$ is a minimizer, completing the proof of existence.

  Now we prove that any minimizer must be a half-fold figure-eight elastica.
  Fix any minimizer $\gamma\in X_0$ (note that $|\gamma'|\equiv1$).
  Thanks to the reparameterization invariance, the curve $\gamma$ is also a minimizer in the class of (not necessary unit-speed but only) immersed curves with the same boundary condition.
  Then the standard Lagrange multiplier method \cite[Proposition 1, Sect.\ 4.14]{Zeidler1995} ensures that there exists $\lambda\in\mathbf{R}$ such that $\gamma$ is a critical point of the functional $E_\lambda$, cf.\ \eqref{eq:energy_E_lambda}.
  More precisely, for any $\eta\in H^2(I;\mathbf{R}^n)\cap H^1_0(I;\mathbf{R}^n)$ we have the $H^2$-continuous Fr\'{e}chet derivatives given by
  \begin{align*}
    DL[\gamma](\eta) = \int_I \langle\gamma',\eta'\rangle dt, \quad DB[\gamma](\eta) = \int_I 2\langle \gamma'',\eta'' \rangle dt - \int_I 3|\gamma''|^2\langle \gamma',\eta' \rangle dt,
  \end{align*}
  where we used $|\gamma'|\equiv1$ to reduce the formulae; then the minimality of $\gamma$ implies that $DE_\lambda[\gamma]=DB[\gamma]+\lambda DL[\gamma]=0$ for some $\lambda\in\mathbf{R}$, where we used the fact that $DL[\gamma]$ is a non-zero functional since $\gamma$ is not a segment.
  Since this identity holds in particular for all $\eta\in C_c^\infty(I;\mathbf{R}^n)$, we first deduce from a standard bootstrap argument that $\gamma\in C^\infty(\bar{I};\mathbf{R}^n)$.
  Then we deduce from integration by parts (see also \cite[Lemma A.1]{DallAcqua2014}) that for all $\eta \in C^\infty(\bar{I};\mathbf{R}^n)$ such that $\eta(0)=\eta(1)=0$,
  $$DE_\lambda[\gamma](\eta)= \int_I \langle 2\nabla_s^2\kappa+|\kappa^2|\kappa -\lambda\kappa,\eta \rangle + [2\langle \kappa,\nabla_s\eta \rangle]_0^1 =0.$$
  This implies that $\gamma$ is an elastica, cf.\ \eqref{eq:elastica}, and in addition, $\kappa(0)=\kappa(1)=0$.
  Combining these facts with the original boundary condition that $\gamma(0)=\gamma(1)=0$, and using Theorem \ref{thm:classificationelastica} and Lemma \ref{lem:Navier}, we find that the minimizer $\gamma$ must be an $\frac{N}{2}$-fold figure-eight elastica for some positive integer $N$.
  By Lemma \ref{lem:figureeight} (iii) and the energy-minimality of $\gamma$, we have $N=1$.
  We thus conclude that any minimizer is a half-fold figure-eight elastica.

  Finally, by Lemma \ref{lem:figureeight} (iii), we deduce that the minimum is $\varpi^*$.
\end{proof}

\subsection{Li--Yau type multiplicity inequality}\label{subsect:Li-Yau}

We now turn to the proof of Theorem \ref{thm:Li-Yau_closed}.
For later use it is convenient to first prove an open-curve counterpart of Theorem \ref{thm:Li-Yau_closed}.

\begin{theorem}[Multiplicity inequality for open curves]\label{thm:Li-Yau_open}
  Let $\gamma:[0,1]\to\mathbf{R}^n$ be an immersed curve with a point of multiplicity $k\geq2$.
  Then
  \begin{equation}\label{eq:Li-Yau_open}
    \bar{B}[\gamma]\geq \varpi^*(k-1)^2.
  \end{equation}
  In particular, if an immersed curve $\gamma$ has the property that $\bar{B}[\gamma]<\varpi^*$, then $\gamma$ is embedded; the threshold $\varpi^*$ is optimal due to a half-fold figure-eight elastica.
\end{theorem}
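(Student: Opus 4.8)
The plan is to reduce the statement to Proposition \ref{prop:minimizer} by dividing $\gamma$ at the point of multiplicity and recombining the resulting estimates via the Cauchy--Schwarz inequality. First I would fix a point $p\in\mathbf{R}^n$ of multiplicity $k$ and choose parameters $0\le t_1<t_2<\dots<t_k\le 1$ with $\gamma(t_j)=p$ for every $j$. This partitions the ``inner'' arc $\gamma|_{[t_1,t_k]}$ into $k-1$ consecutive pieces $\gamma_i:=\gamma|_{[t_i,t_{i+1}]}$, each an immersed $H^2$-curve whose two endpoints are mapped to the same point $p$. After an ambient translation, Proposition \ref{prop:minimizer} applies to each $\gamma_i$ and yields $L_iB_i\ge\varpi^*$, where $L_i:=L[\gamma_i]$ and $B_i:=B[\gamma_i]$ denote the length and the (unnormalized) bending energy of the $i$-th piece.

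The only subtlety is that the normalized energy $\bar B$ is not additive under concatenation --- it is the product $L[\gamma]B[\gamma]$ --- so one cannot simply add the bounds $\bar B[\gamma_i]\ge\varpi^*$. Instead I would use that each factor separately dominates the corresponding sum over the pieces: since the intervals $[t_i,t_{i+1}]$ are non-overlapping and contained in $[0,1]$, and since arclength and $|\kappa|^2$ are non-negative, we have $L:=L[\gamma]\ge\sum_{i=1}^{k-1}L_i$ and $B:=B[\gamma]\ge\sum_{i=1}^{k-1}B_i$. Then Cauchy--Schwarz (applied with $a_i=\sqrt{L_i}$ and $b_i=\sqrt{B_i}$) gives
$$\bar B[\gamma]=LB\ge\Big(\sum_{i=1}^{k-1}L_i\Big)\Big(\sum_{i=1}^{k-1}B_i\Big)\ge\Big(\sum_{i=1}^{k-1}\sqrt{L_iB_i}\Big)^2\ge (k-1)^2\varpi^*,$$
which is exactly \eqref{eq:Li-Yau_open}.

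For the remaining assertions: if $\bar B[\gamma]<\varpi^*$, then by the inequality just proved (with $k=2$) the curve $\gamma$ cannot have a point of multiplicity $\ge 2$, hence is injective, i.e.\ embedded. Optimality of the threshold $\varpi^*$ follows from a half-fold figure-eight elastica: parameterized by arclength on $[0,2K(m^*)]$ it satisfies $\gamma(0)=\gamma(2K(m^*))=0$ by Lemma \ref{lem:figureeight}(i) (so it is non-embedded, with multiplicity $2$) and $\bar B[\gamma]=\varpi^*$ by Lemma \ref{lem:figureeight}(iii). All the genuine analytic content --- the direct-method existence of a minimizer, Langer--Singer's classification, and the Navier-boundary characterization of figure-eight elasticae --- is already absorbed into Proposition \ref{prop:minimizer}, so the main (and essentially only) obstacle here is the non-additivity of $\bar B$, which is handled by the Cauchy--Schwarz step above; I do not expect further difficulties.
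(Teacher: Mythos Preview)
Your proof is correct and follows essentially the same route as the paper: divide at the multiplicity point, apply Proposition~\ref{prop:minimizer} to each piece, and recombine. The only cosmetic differences are that the paper first trims the outer arcs so that $L=\sum L_i$ and $B=\sum B_i$ become equalities (whereas you keep them as inequalities), and it phrases the final step as the HM--AM inequality $\big(\sum L_i\big)\big(\sum \varpi^*/L_i\big)\ge (k-1)^2\varpi^*$ rather than Cauchy--Schwarz---both are equivalent here.
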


\begin{proof}[Proof of Theorem \ref{thm:Li-Yau_open}]
  By the assumption on multiplicity, there are $0\leq a_1<\dots<a_k\leq1$ such that $\gamma(a_1)=\dots=\gamma(a_k)$.
  If $a_1>0$, then we may cut off the part $\gamma|_{[0,a_1]}$ and create a new curve whose normalized bending energy is strictly less than the original one, and hence without loss of generality we may assume that $a_1=0$.
  Similarly, we may assume that $a_k=1$.
  Up to translation we may assume that the multiplicity point is the origin, and hence can apply Proposition \ref{prop:minimizer} to each $\gamma_i:=\gamma_{[a_i,a_{i+1}]}$, where $i=1,\dots,k-1$, to deduce that
  \begin{equation}\label{eq01}
    L[\gamma_i]B[\gamma_i] = \bar{B}[\gamma_i] \geq \varpi^*.
  \end{equation}
  Noting that $B[\gamma]=\sum_{i=1}^{k-1}B[\gamma_i]$ and $L[\gamma]=\sum_{i=1}^{k-1}L[\gamma_i]$, we have
  \begin{equation}\label{eq02}
    \begin{split}
        \bar{B}[\gamma] &= \Big(\sum_{i=1}^{k-1}L[\gamma_i]\Big)\Big(\sum_{i=1}^{k-1}B[\gamma_i]\Big) \\
        &\geq \Big(\sum_{i=1}^{k-1}L[\gamma_i]\Big)\Big(\sum_{i=1}^{k-1}\frac{1}{L[\gamma_i]}\varpi^*\Big)\geq (k-1)^2\varpi^*,
    \end{split}
  \end{equation}
  where the last estimate follows by the elementary inequality of arithmetic and harmonic means.
\end{proof}

Theorem \ref{thm:Li-Yau_closed} can be regarded as a special consequence of Theorem \ref{thm:Li-Yau_open}.

\begin{proof}[Proof of Theorem \ref{thm:Li-Yau_closed}]
  Given any closed curve $\gamma$ with a point of multiplicity $k$, we can create an open curve with a point of multiplicity $k+1$ after cutting $\gamma$ at the original point of multiplicity and opening the domain $\mathbf{T}^1$ to $[0,1]$.
  Applying Theorem \ref{thm:Li-Yau_open} to this curve, we obtain the improved inequality in Theorem \ref{thm:Li-Yau_closed}.
\end{proof}

\section{Leafed elastica and optimality}\label{sect:rigidity}

The goal of this section is to prove Theorems \ref{thm:rigidity_closed} and \ref{thm:nonoptimality}.
To this end we introduce the notion of {\em leafed elastica}, which is compatible with our problem.
We first discuss some basic properties of leafed elasticae, from which we observe how the difference depending on the pair $(n,k)$ occurs in optimality.

\subsection{Leafed elastica}\label{subsect:leafedelastica}

Leafed elasticae are defined by connecting leaves of half figure-eight elasticae, cf.\ Figure \ref{fig:leaf}.

\begin{figure}[htbp]
  \includegraphics[width=18mm]{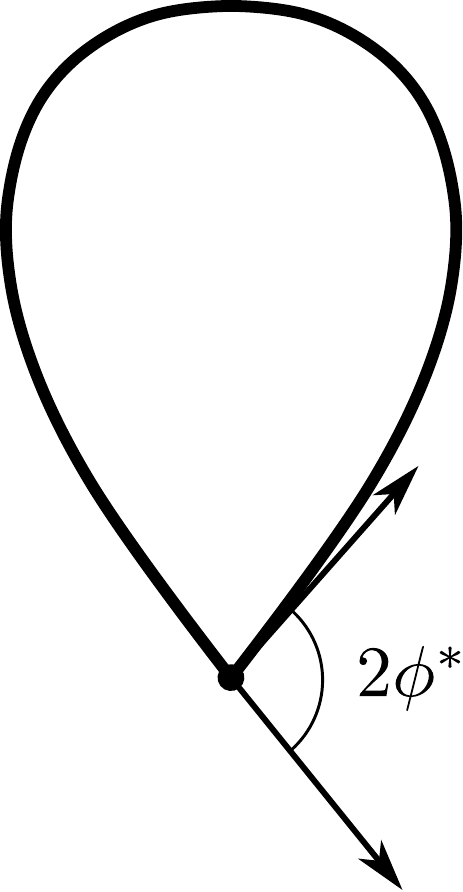}
  \caption{Leaf: A half-fold figure-eight elastica.}
  \label{fig:leaf}
\end{figure}

\begin{definition}[Leafed elastica]\label{def:leafedelastica}
  Let $n\geq2$ and $k\geq1$.
  We call an immersed $H^2$-curve $\gamma:[a,b]\to\mathbf{R}^n$ {\em $k$-leafed elastica} if there are $a=a_0<a_1<\dots<a_k=b$ such that for each $i=1,\dots,k$ the curve $\gamma_i:=\gamma|_{[a_{i-1},a_i]}$ is a half-fold figure-eight elastica, and also $L[\gamma_1]=\dots=L[\gamma_k]$.
  Similarly, we call a closed $H^2$-curve $\gamma:\mathbf{T}^1\to\mathbf{R}^n$ {\em closed $k$-leafed elastica} if there is $t_0\in\mathbf{T}^1$ such that the curve $\tilde{\gamma}:[0,1]\to\mathbf{R}^n$ defined by $\tilde{\gamma}(t):=\gamma(t+t_0)$ is a $k$-leafed elastica.
  In addition, to specify the dimension $n$ of the target space, sometimes we also use the term {\em (closed) $(n,k)$-leafed elastica}.
\end{definition}

\begin{remark}\label{rem:joint}
  By definition, a $k$-leafed elastica $\gamma$ has a point of multiplicity $k+1$, namely $\gamma(a_0)=\dots=\gamma(a_k)$.
  Also, a closed $k$-leafed elastica has a point of multiplicity $k$.
  We call such a point {\em joint} of a leafed elastica.
\end{remark}

An easy consequence of the previous definition is the following:

\begin{proposition}[Regularity and energy]\label{prop:regularityenergy_leafedelastica}
  Let $k\geq1$.
  Then any $k$-leafed (resp.\ closed $k$-leafed) elastica is of class $C^{2,1}$ and piecewise analytic, has a point of multiplicity $k+1$ (resp.\ $k$), and has the energy $\bar{B}[\gamma]=\varpi^*k^2$.
\end{proposition}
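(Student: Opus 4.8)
The plan is to obtain all three assertions from the properties of a single half-fold figure-eight elastica collected in Lemma~\ref{lem:figureeight}, and then patch them together over the $k$ leaves. Throughout I would use the arclength parametrization of $\gamma$ (in the closed case, of $\tilde\gamma:[0,1]\to\mathbf{R}^n$, for which $t=0\sim 1$ is an additional ``joint''): this is harmless, since reparametrizing an immersed $H^2$-curve by arclength preserves the $H^2\hookrightarrow C^1$ regularity and affects none of the stated conclusions, and since $L$, $B$, $\bar B$, the multiplicity, and the leaf decomposition are all insensitive to the shift $t\mapsto t+t_0$. Write $\gamma_i:=\gamma|_{[a_{i-1},a_i]}$ for $i=1,\dots,k$, and recall from Lemma~\ref{lem:figureeight}(i) that each leaf $\gamma_i$, after the similarity bringing it to the standard form \eqref{eq:figureeightcurve}, has its two endpoints coinciding and has curvature vanishing exactly at those two endpoints.

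For the multiplicity, the relations $\gamma_i(a_{i-1})=\gamma_i(a_i)$, $i=1,\dots,k$, chain up to $\gamma(a_0)=\gamma(a_1)=\dots=\gamma(a_k)$, so this common value is a point whose preimage contains the $k+1$ distinct parameters $a_0<\dots<a_k$; in the closed case the identification $a_0=0\sim 1=a_k$ on $\mathbf{T}^1$ collapses these to the $k$ distinct preimages $t_0+a_0,\dots,t_0+a_{k-1}$. For the energy, additivity of $B$ and $L$ together with $L[\gamma_1]=\dots=L[\gamma_k]=:\ell$ gives $L[\gamma]=k\ell$ and $B[\gamma]=\sum_{i=1}^k B[\gamma_i]$, while Lemma~\ref{lem:figureeight}(iii) applied to each leaf gives $\ell\,B[\gamma_i]=\bar B[\gamma_i]=\varpi^*$; hence $\bar B[\gamma]=L[\gamma]B[\gamma]=(k\ell)(k\varpi^*/\ell)=\varpi^* k^2$.

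For the regularity, on each closed piece the arclength parametrization of $\gamma_i$ equals, up to a similarity, the right-hand side of \eqref{eq:figureeightcurve}, whose components are built from the real-analytic functions $\am(\cdot,m^*)$, $\cn(\cdot,m^*)$ and $E(\am(\cdot,m^*),m^*)$; thus $\gamma$ is piecewise analytic. At a joint $a_i$ the curve is $C^1$ by $\gamma\in H^2\hookrightarrow C^1$ (on $\mathbf{T}^1$ in the closed case), and since $\gamma''=\kappa$ in arclength and the curvature of each leaf vanishes at its endpoints, the one-sided limits of $\gamma''$ at $a_i$ coming from both adjacent leaves equal $0$; hence $\gamma\in C^2$ with $\gamma''(a_i)=0$. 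Finally $\gamma''$ is analytic, hence Lipschitz, on each of the finitely many closed pieces and is continuous across every joint, so a routine patching estimate makes $\gamma''$ globally Lipschitz, i.e.\ $\gamma\in C^{2,1}=W^{3,\infty}$. I expect the only step requiring genuine care to be this last one---in particular the use of Lemma~\ref{lem:figureeight}(i) to see that the curvature vectors of neighbouring leaves both vanish at the common joint, so that the patched curve actually gains $C^2$ (and then $C^{2,1}$) rather than merely $C^1$; the multiplicity count and the energy identity are immediate from the definitions and additivity.
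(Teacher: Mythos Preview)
Your proof is correct and follows essentially the same route as the paper: multiplicity from the endpoint coincidences of the leaves, energy from additivity together with Lemma~\ref{lem:figureeight}(iii), piecewise analyticity from \eqref{eq:figureeightcurve}, and $C^2$-regularity at the joints from Lemma~\ref{lem:figureeight}(i). The only cosmetic difference is that for the final $C^{2,1}$ step the paper bounds $\gamma'''$ directly via the derivative formula~\eqref{eq:derivativecn} for $\cn$, whereas you argue more abstractly that $\gamma''$ is analytic (hence Lipschitz) on each closed leaf and continuous across joints, which is an equally valid way to reach the same conclusion.
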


\begin{proof}
  The piecewise analyticity follows by the representation in \eqref{eq:figureeightcurve}.
  The whole $C^{2,1}$-regularity follows in this way; first, any $k$-leafed elastica $\gamma$ is automatically of class $C^1$ since $H^2\hookrightarrow C^1$; also, the curvature of $\gamma$ vanishes at each joint of leaves by Lemma \ref{lem:figureeight} (i) so that $\gamma$ is also of class $C^2$; finally, the third derivative of $\gamma$ is bounded in $L^\infty$ since each (planar) leaf of $\gamma$ has the signed curvature given in \eqref{eq:figureeightcurvature} (in an affine subspace $\simeq\mathbf{R}^2$) so that, by the derivative formula
  \begin{equation}\label{eq:derivativecn}
    \tfrac{\partial}{\partial x}\cn(x,m)=-\sn(x,m)\sqrt{1-m\sn^2(x,m)},
  \end{equation}
  its derivative is bounded; consequently, $\gamma$ is of class $W^{3,\infty}=C^{2,1}$.
  The multiplicity at the joint is already discussed in Remark \ref{rem:joint}.
  Finally, the normalized bending energy can be explicitly computed, cf.\ Lemma \ref{lem:figureeight} (iii).
\end{proof}

We mention two obvious examples of classical figure-eight elasticae.

\begin{example}\label{ex:openleafedfigureeight}
  Let $k\geq1$.
  Then a $\frac{k}{2}$-fold figure-eight elastica is an analytic planar example of an $(n,k)$-leafed elastica.
\end{example}

\begin{example}\label{ex:closedevenleafedfigurefight}
  Let $k\geq2$ be even.
  Then a closed $\frac{k}{2}$-fold figure-eight elastica is an analytic planar example of a closed $(n,k)$-leafed elastica.
\end{example}

Leafed elasticae have flexibility due to possible discontinuity of their third derivatives at the joints, and in particular these examples do not exhaust all possible configurations.
For example, it is easy to imagine that for any $k\geq2$ there are more (open) $k$-leafed elasticae by reflecting or twisting leaves at their joints arbitrarily.

On the other hand, to obtain closed $k$-leafed elasticae, we are required to close it up in the first-order sense.
This requirement causes non-negligible rigidity.
Indeed, it is easy to observe

\begin{proposition}\label{prop:uniqunessleaf12}
  No closed $1$-leafed elastica exists.
  A closed $2$-leafed elastica is (up to similarity and reparameterization) uniquely given by a figure-eight elastica.
\end{proposition}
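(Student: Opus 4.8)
The plan is to exploit the explicit curve representation and the symmetry/periodicity properties of half-fold figure-eight elasticae from Lemma \ref{lem:figureeight}, together with the closing-up condition in the first-order ($C^1$) sense forced by $H^2\hookrightarrow C^1$. First I would rule out the case $k=1$: a closed $1$-leafed elastica $\gamma:\mathbf{T}^1\to\mathbf{R}^n$ would, after a rotation of the domain, restrict to a single half-fold figure-eight elastica $\tilde\gamma:[0,1]\to\mathbf{R}^n$ with $\tilde\gamma(0)=\tilde\gamma(1)$ and, because $\gamma$ is a closed $H^2$-curve, also $\tilde\gamma'(0)=\tilde\gamma'(1)$. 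But by Lemma \ref{lem:figureeight} (iv) the two endpoint tangents of a half-fold figure-eight elastica satisfy $\langle\gamma'(0),\gamma'(2K(m^*))\rangle=\cos 2\phi^*$ with $\phi^*\in(0,\pi/2)$ and $\cos\phi^*=2m^*-1$. Since $m^*>1/2$ we have $0<\phi^*<\pi/2$, hence $\cos 2\phi^*\neq 1$ (indeed $2\phi^*\in(0,\pi)$ so the tangents are not parallel in the same direction), so $\tilde\gamma'(0)\neq\tilde\gamma'(1)$, a contradiction. This kills $k=1$.

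For $k=2$, let $\gamma:\mathbf{T}^1\to\mathbf{R}^n$ be a closed $2$-leafed elastica; after rotating the domain we get $\tilde\gamma:[0,1]\to\mathbf{R}^n$ consisting of two half-fold figure-eight elasticae $\tilde\gamma_1=\tilde\gamma|_{[0,1/2]}$ and $\tilde\gamma_2=\tilde\gamma|_{[1/2,1]}$ of equal length, with a triple point at $\tilde\gamma(0)=\tilde\gamma(1/2)=\tilde\gamma(1)$ (the joint), and the closure conditions $\tilde\gamma(0)=\tilde\gamma(1)$ and $\tilde\gamma'(0)=\tilde\gamma'(1)$. Each $\tilde\gamma_i$ lies in some affine plane and, after a similarity, is given by \eqref{eq:figureeightcurve} up to an ambient isometry; in particular the unit tangents at the two ends of each leaf make the fixed angle $2\phi^*$ with each other (Lemma \ref{lem:figureeight} (iv)), and the curvature vanishes at both ends of each leaf (Lemma \ref{lem:figureeight} (i)), so $\tilde\gamma$ is automatically $C^2$ at all three junction points. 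The key step is to show that the second leaf $\tilde\gamma_2$ is forced to be the reflection/continuation of $\tilde\gamma_1$ making up a one-fold figure-eight elastica. Concretely: write $T_0:=\tilde\gamma'(0)$, $T_{1/2}:=\tilde\gamma'(1/2^-)=\tilde\gamma'(1/2^+)$ (equal by $C^1$), $T_1:=\tilde\gamma'(1)$. The angle data give $\langle T_0,T_{1/2}\rangle=\cos 2\phi^*$ for leaf $1$ and $\langle T_{1/2},T_1\rangle=\cos 2\phi^*$ for leaf $2$, while closure forces $T_1=T_0$. I would then use the symmetry in Lemma \ref{lem:figureeight} (ii), namely $\gamma(2K(m^*)-s)=P_1\gamma(s)$, which says a single leaf has a reflection symmetry swapping its two endpoints; combined with the equal-length condition and the coincidence of the three junction points, this pins down the relative placement of the two leaves: the plane of leaf $2$ must coincide with that of leaf $1$ (otherwise the three equal endpoint-tangent constraints at a common point cannot be met in $\mathbf{R}^n$), and within that plane the only way to attach a second leaf of the same length at the triple point, respecting the tangent directions and re-closing, is the mirror image across the joint, yielding exactly the one-fold (closed) figure-eight elastica of Definition \ref{def:figureeight}. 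Finally, Lemma \ref{lem:figureeight} (iii) confirms $\bar B=4\varpi^*$, consistent with Proposition \ref{prop:regularityenergy_leafedelastica} for $k=2$.

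The main obstacle I anticipate is the bookkeeping of the planarity/coincidence argument: showing rigorously that both leaves must lie in the \emph{same} affine plane rather than merely in a common $3$-dimensional subspace, and then that within that plane the reattachment is unique up to the allowed invariances. The cleanest route is probably to fix coordinates so that the joint is the origin and $T_0$ points along a fixed direction, normalize leaf $1$ via \eqref{eq:figureeightcurve}, and then parametrize all admissible leaf-$2$ placements by an isometry fixing the origin and mapping one endpoint-tangent to $T_{1/2}$; the constraints $\tilde\gamma_2(1)=0$ and $\tilde\gamma_2'(1)=T_0$ together with $|\,\tilde\gamma_2'(1/2)| = |\,\tilde\gamma_1'(1/2)|$ and equal arclength then reduce the isometry group to a single reflection, giving uniqueness. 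The non-parallelism fact $\cos 2\phi^*\neq 1$ (equivalently $m^*\neq 1$, guaranteed by $m^*\in(0,1)$ with $m^*>1/2$) is what makes the whole argument go through, both for excluding $k=1$ and for forcing the mirror attachment in $k=2$.
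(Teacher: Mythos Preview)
Your approach is correct and matches the paper's, which simply cites Lemma \ref{lem:figureeight} and the fact that the endpoint tangents of a leaf make the angle $2\phi^*\in(0,\pi)$; you are supplying the details the paper omits. One sharpening: the coplanarity of the two leaves for $k=2$ follows cleanly without an isometry-group argument, since each leaf is planar and its plane passes through the joint and contains both of its endpoint tangents; for leaf~$1$ those are $T_0,T_{1/2}$ and for leaf~$2$ they are $T_{1/2},T_1=T_0$, and since $2\phi^*\in(0,\pi)$ these two vectors are linearly independent and determine a unique plane. Within that plane, leaf~$2$ is pinned down by its start tangent $T_{1/2}$ and length, up to a single sign of curvature; only one sign yields ending tangent $T_0$ (the other yields the vector at signed angle $4\phi^*$ from $T_0$, which cannot equal $T_0$ as $\phi^*\in(0,\pi/2)$), and Lemma \ref{lem:figureeight}(ii) then identifies the resulting curve with the one-fold figure-eight elastica.
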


\begin{proof}
  This follows since the angle made by the tangent vectors at the endpoints of one leaf is given by $2\phi^*\in(0,\pi)$, cf.\ Lemma \ref{lem:figureeight} and Figure \ref{fig:leaf}.
\end{proof}

In general, whether there exists a closed $(n,k)$-leafed elastica can be characterized by whether the endpoints of leaves can be joined up to first order.
This fact is summarized in the following lemma, the proof of which is straightforward and safely omitted.

\begin{lemma}[Characterization of closed $(n,k)$-leafed elasticae]
  \label{lem:closedleafedcharacterization}
  Let $n\geq2$ and $k\geq1$.
  Let $\Omega^*(n,k)$ be the set of all $k$-tuples $(\omega_1,\dots,\omega_k)$ of $n$-dimensional unit-vectors $\omega_1,\dots,\omega_k\in \mathbf{S}^{n-1}\subset\mathbf{R}^n$ such that $\langle\omega_{i},\omega_{i-1}\rangle=\cos2\phi^*$ holds for any $i=1,\dots,k$, where we interpret $\omega_0:=\omega_k$.

  \begin{itemize}
      \item[(i)] If $\gamma:\mathbf{T}^1\to\mathbf{R}^n$ is a unit-speed closed $(n,k)$-leafed elastica, then there is $t_0\in\mathbf{T}^1$ such that the $k$-tuple of vectors $\omega_i:=\gamma'(\frac{i}{k}+t_0)$, $i=1,\dots,k$, is an element of $\Omega^*(n,k)$.
      \item[(ii)] Conversely, 
      if $(\omega_1,\dots,\omega_k)\in (\mathbf{S}^{n-1})^k$ is an element of $\Omega^*(n,k)$, then there exists a unit-speed closed $(n,k)$-leafed elastica $\gamma:\mathbf{T}^1\to\mathbf{R}^n$ such that $\gamma(0)=0$ and $\gamma'(\frac{i}{k})=\omega_i$ for $i=1,\dots,k$.
  \end{itemize}
\end{lemma}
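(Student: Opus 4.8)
The plan is to reduce both implications to the rigidity of a single leaf recorded in Lemma~\ref{lem:figureeight}, treating one leaf at a time. Throughout I would take $\gamma$ in the parameterization for which $\gamma|_{[0,1]}$ is itself a $k$-leafed elastica (i.e.\ with $t_0=0$ in Definition~\ref{def:leafedelastica}); since the $k$ leaves have equal length and the total length is $1$, their endpoint parameters are $0,\tfrac{1}{k},\dots,\tfrac{k-1}{k}$ on $\mathbf{T}^1$, all mapped to a single \emph{joint point} $q$, and by Lemma~\ref{lem:figureeight}(i) each leaf meets $q$ only at its two endpoints. For the forward implication, each leaf $\gamma_i:=\gamma|_{[(i-1)/k,\,i/k]}$ is a half-fold figure-eight elastica, and the inner product of its two endpoint unit tangents is a similarity- and reparameterization-invariant of the leaf, hence equals $\cos2\phi^*$ by Lemma~\ref{lem:figureeight}(iv). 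Using $\gamma'(0)=\gamma'(1)$ on $\mathbf{T}^1$ and setting $\omega_i:=\gamma'(\tfrac{i}{k})$, $\omega_0:=\omega_k$, this reads $\langle\omega_i,\omega_{i-1}\rangle=\cos2\phi^*$ for all $i$, i.e.\ $(\omega_1,\dots,\omega_k)\in\Omega^*(n,k)$.

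For the converse I would build $\gamma$ leaf by leaf. Given $(\omega_1,\dots,\omega_k)\in\Omega^*(n,k)$, observe that $\cos2\phi^*\ne\pm1$ since $\phi^*\in(0,\pi/2)$, so each consecutive pair $\omega_{i-1},\omega_i$ is linearly independent and spans a two-plane $\Pi_i\subset\mathbf{R}^n$. Let $\sigma\colon[0,\tfrac{1}{k}]\to\mathbf{R}^2\times\{0\}\subset\mathbf{R}^n$ be the length-$\tfrac{1}{k}$ half-fold figure-eight elastica obtained by rescaling \eqref{eq:figureeightcurve}; by Lemma~\ref{lem:figureeight}(i),(iv) it satisfies $\sigma(0)=\sigma(\tfrac{1}{k})=0$ and its endpoint tangents $e:=\sigma'(0)$, $f:=\sigma'(\tfrac{1}{k})$ span $\mathbf{R}^2\times\{0\}$ with $\langle e,f\rangle=\cos2\phi^*$. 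Since the Gram matrices of the bases $(e,f)$ and $(\omega_{i-1},\omega_i)$ coincide, the linear map $R_i$ with $R_ie=\omega_{i-1}$, $R_if=\omega_i$ is a linear isometry of $\mathbf{R}^2\times\{0\}$ onto $\Pi_i$, and I would set $\gamma_i(t):=R_i\bigl(\sigma(t-\tfrac{i-1}{k})\bigr)$ for $t\in[\tfrac{i-1}{k},\tfrac{i}{k}]$. Then each $\gamma_i$ is a length-$\tfrac{1}{k}$ half-fold figure-eight elastica with $\gamma_i(\tfrac{i-1}{k})=\gamma_i(\tfrac{i}{k})=0$, $\gamma_i'(\tfrac{i-1}{k})=\omega_{i-1}$ and $\gamma_i'(\tfrac{i}{k})=\omega_i$, so gluing the $\gamma_i$ along $\mathbf{T}^1$ yields a curve whose values ($0$) and first derivatives ($\omega_i$, using $\omega_0=\omega_k$ at the parameter $0$) match at every endpoint; since the curvature vanishes at each such point by Lemma~\ref{lem:figureeight}(i), the result is of class $C^2\subset H^2$, unit-speed and immersed (cf.\ Proposition~\ref{prop:regularityenergy_leafedelastica}), hence a closed $(n,k)$-leafed elastica $\gamma$ with $\gamma(0)=0$ and $\gamma'(\tfrac{i}{k})=\omega_i$.

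For uniqueness --- which I expect to be the only point needing real care --- I would take any unit-speed closed $(n,k)$-leafed elastica $\gamma$ with $\gamma(0)=0$ and $\gamma'(\tfrac{i}{k})=\omega_i$. In the canonical parameterization its leaves are $\gamma_i=\gamma|_{[(i-1)/k,i/k]}$, and since $0$ is one of their endpoint parameters the joint point equals $\gamma(0)=0$, so each $\gamma_i$ is a length-$\tfrac{1}{k}$ half-fold figure-eight elastica with both endpoints at $0$ and endpoint tangents $\omega_{i-1},\omega_i$. It then suffices to show that such a leaf is unique. Here I would argue via Lemma~\ref{lem:figureeight}: the image of $\gamma_i$ is a similarity copy of \eqref{eq:figureeightcurve}, fixing the length to $\tfrac{1}{k}$ removes the dilation; the affine plane of $\gamma_i$ must equal $\Pi_i=\mathrm{span}\{\omega_{i-1},\omega_i\}$ (a planar curve carries all its tangent vectors in its plane, and two independent ones span it); and within $\Pi_i$ the datum $\bigl(\gamma_i(\tfrac{i-1}{k}),\gamma_i'(\tfrac{i-1}{k})\bigr)=(0,\omega_{i-1})$ leaves only the reflection across the line $\mathbf{R}\omega_{i-1}$ as remaining freedom, which the requirement $\gamma_i'(\tfrac{i}{k})=\omega_i$ eliminates, because the two leaves related by that reflection have distinct endpoint tangents while $\omega_i\in\Pi_i$ is one of the two admissible end tangents. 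Hence each $\gamma_i$, and therefore $\gamma$, is uniquely determined. Beyond this small piece of two-dimensional rigidity, the argument is routine bookkeeping around Lemma~\ref{lem:figureeight}.
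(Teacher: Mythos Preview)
Your proof is correct and supplies precisely the routine details the paper has in mind; the paper itself omits the proof entirely, stating that it ``is straightforward and safely omitted.'' Your leaf-by-leaf reduction to Lemma~\ref{lem:figureeight}(i),(iv) is the natural approach, and your handling of uniqueness via the two-plane $\Pi_i=\mathrm{span}\{\omega_{i-1},\omega_i\}$ together with the reflection argument is the expected one. One small remark: your opening sentence (taking $t_0=0$) is really a clarification of the lemma's statement rather than part of the proof---as literally written, the forward implication presupposes that the leaf endpoints sit at the parameters $i/k$, which is only guaranteed after the shift in Definition~\ref{def:leafedelastica}; you handle this correctly.
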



This characterization is useful both for ensuring nonexistence and for constructing concrete examples.

We first verify nonexistence of planar closed leafed elasticae with odd leaves caused by an algebraic obstruction for closing the leaves (there always exists a planar closed $k$-leafed elastica if $k$ is even, cf.\ Example \ref{ex:closedevenleafedfigurefight}).
This obstruction is related to an irrationality or transcendence problem involving special values of hypergeometric functions.
Such a problem is in general difficult in spite of its classicality, see e.g.\ \cite{Fischler2020} and references therein.
However, fortunately, our problem can be reduced to the following theorem of Andr\'{e} \cite{Andre1996}.
Let $\overline{\mathbf{Q}}\subset\mathbf{C}$ denote the field of algebraic numbers.

\begin{theorem}[Andr\'{e} \cite{Andre1996}]
  For any $z\in\overline{\mathbf{Q}}$ with $0<|z|<1$, the values of the Gaussian hypergeometric functions $x_1:={}_2F_1[\frac{1}{2},\frac{1}{2};1;z]$ and $x_2:={}_2F_1[-\frac{1}{2},\frac{1}{2};1;z]$
  are algebraically independent over $\overline{\mathbf{Q}}$, i.e., those two values do not satisfy $f(x_1,x_2)=0$ for any non-trivial $f\in \overline{\mathbf{Q}}[X_1,X_2]$ (polynomial with coefficients in $\overline{\mathbf{Q}}$).
\end{theorem}

With the aid of this result we prove the following

\begin{proposition}
  \label{prop:nonexistenceleaf}
  For any odd $k\geq3$ there exists no closed $(2,k)$-leafed elastica.
\end{proposition}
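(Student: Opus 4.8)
By Lemma~\ref{lem:closedleafedcharacterization}, it suffices to show that $\Omega^*(2,k)=\emptyset$ for odd $k\geq3$. A unit vector in $\mathbf{S}^1$ is determined by an angle, so write $\omega_i=(\cos\alpha_i,\sin\alpha_i)$. The constraint $\langle\omega_i,\omega_{i-1}\rangle=\cos2\phi^*$ says that consecutive angles differ by $\pm2\phi^*$ modulo $2\pi$; traversing the cyclic chain $\omega_1,\dots,\omega_k,\omega_0=\omega_k$ we accumulate $\sum_{i=1}^k\varepsilon_i\cdot 2\phi^*$ with signs $\varepsilon_i\in\{+1,-1\}$, and closing up forces
$$2\phi^*\sum_{i=1}^k\varepsilon_i\in 2\pi\mathbf{Z}.$$
Since $k$ is odd, $\sum_{i=1}^k\varepsilon_i$ is a nonzero odd integer $m$ (its parity equals that of $k$), so $\Omega^*(2,k)\neq\emptyset$ would force $m\phi^*/\pi\in\mathbf{Z}$ with $m$ odd, hence $\phi^*/\pi\in\mathbf{Q}$ with odd denominator; in particular $\phi^*/\pi$ would be rational. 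So the whole proposition reduces to the single claim: \emph{$\phi^*/\pi$ is irrational}, where $\cos\phi^*=2m^*-1$ and $K(m^*)=2E(m^*)$.

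**Reducing the irrationality claim to Andr\'e's theorem.** Suppose for contradiction $\phi^*/\pi=p/q\in\mathbf{Q}$. Then $e^{i\phi^*}$ is a root of unity, hence an algebraic number, and therefore $\cos\phi^*=2m^*-1\in\overline{\mathbf{Q}}$, so $m^*\in\overline{\mathbf{Q}}$. Now invoke the classical representations of the complete elliptic integrals as hypergeometric values:
$$K(m)=\frac{\pi}{2}\,{}_2F_1\!\left[\tfrac12,\tfrac12;1;m\right],\qquad E(m)=\frac{\pi}{2}\,{}_2F_1\!\left[-\tfrac12,\tfrac12;1;m\right].$$
The defining relation $K(m^*)=2E(m^*)$ becomes
$${}_2F_1\!\left[\tfrac12,\tfrac12;1;m^*\right]=2\,{}_2F_1\!\left[-\tfrac12,\tfrac12;1;m^*\right],$$
which is a nontrivial polynomial (indeed linear) relation with coefficients in $\overline{\mathbf{Q}}$ between the two hypergeometric values at the point $z=m^*$. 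Since $m^*\in(0,1)$ we have $0<|m^*|<1$, and we just argued $m^*\in\overline{\mathbf{Q}}$; Andr\'e's theorem then asserts these two values are algebraically independent over $\overline{\mathbf{Q}}$, contradicting the linear relation. Hence $\phi^*/\pi\notin\mathbf{Q}$, and by the reduction above $\Omega^*(2,k)=\emptyset$ for every odd $k\geq3$, so no closed $(2,k)$-leafed elastica exists.

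**Where the difficulty lies.** The geometric/combinatorial step is elementary once Lemma~\ref{lem:closedleafedcharacterization} is in hand: it is just bookkeeping of signs around a cycle, using oddness of $k$ to guarantee the sign sum is a nonzero odd integer (in particular nonzero, which is the only thing that matters). The genuine content — and the step I expect to be the crux — is the passage from ``$\phi^*/\pi$ rational'' to ``$m^*$ algebraic'', followed by correctly framing $K(m^*)=2E(m^*)$ as a forbidden $\overline{\mathbf{Q}}$-algebraic relation between ${}_2F_1[\tfrac12,\tfrac12;1;z]$ and ${}_2F_1[-\tfrac12,\tfrac12;1;z]$ so that Andr\'e's theorem applies verbatim. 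One must be slightly careful that the relation is genuinely nontrivial over $\overline{\mathbf{Q}}$ (it is: the polynomial $P(u,v)=u-2v$ is not the zero polynomial), and that the hypothesis $0<|z|<1$ is met (clear, since $m^*\approx0.826\in(0,1)$). No estimate on $m^*$ beyond $m^*\in(0,1)$ is needed here; the earlier bound $m^*>1/2$ plays no role in this argument.
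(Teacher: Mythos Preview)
Your proof is correct and follows essentially the same route as the paper: reduce via Lemma~\ref{lem:closedleafedcharacterization} to the cyclic closing condition $\sum_i\sigma_i\cdot2\phi^*\in2\pi\mathbf{Z}$, use oddness of $k$ to force $\sum_i\sigma_i\neq0$, and hence reduce to the irrationality of $\phi^*/\pi$, which is then obtained from Andr\'e's theorem via the hypergeometric representations of $K$ and $E$. The only cosmetic difference is that you run the transcendence step as a contrapositive (rational $\phi^*/\pi\Rightarrow m^*\in\overline{\mathbf{Q}}\Rightarrow$ contradiction with Andr\'e), whereas the paper shows directly that $m^*$ is transcendental and then uses Chebyshev polynomials in place of your root-of-unity observation; these are the same argument.
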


\begin{proof}
  We first note that for any given $\omega\in\mathbf{S}^1$ there are only two possibilities of $\omega'\in\mathbf{S}^1$ such that $\langle \omega,\omega' \rangle=\cos2\phi^*$, namely $\omega'=R_\pm\omega$, where $R_\pm\in\mathrm{SO}(2)$ denotes the (counterclockwise) rotation matrix through angle $\pm2\phi^*$.
  Combining this fact with Lemma \ref{lem:closedleafedcharacterization}, we deduce that the assertion is equivalent to the following statement:
  For any odd integer $k\geq3$ there is no $k$-tuple of rotation matrices $R_1,\cdots,R_k\in\mathrm{SO}(2)$ through angle either $2\phi^*$ or $-2\phi^*$ such that $R_k\cdots R_1=I$, where $I$ denotes the identity matrix; in other words, for any odd $k\geq3$ there exists no $k$-tuple
  $(\sigma_1,\dots,\sigma_k)\subset\{-1,1\}^k$ such that $\sum_{i=1}^k\sigma_i2\phi^*\in 2\pi\mathbf{Z}$.

  Now we prove that the above equivalent statement holds true.
  It is sufficient to prove that $\phi^*\not\in\pi\mathbf{Q}$.
  By the well-known relation (e.g.\ found in \cite[17.3.9, 17.3.10]{Abramowitz1992}) that ${}_2F_1[\frac{1}{2},\frac{1}{2};1;m]=\frac{2}{\pi}K(m)$ and ${}_2F_1[-\frac{1}{2},\frac{1}{2};1;m]=\frac{2}{\pi}E(m)$ for $m\in(0,1)$, and by definition of $m^*$,
  we deduce that
  $${}_2F_1[\tfrac{1}{2},\tfrac{1}{2};1;m^*]-2{}_2F_1[-\tfrac{1}{2},\tfrac{1}{2};1;m^*]=0.$$
  By Andr\'{e}'s theorem, the number $m^*\in(0,1)$ is transcendental, and hence so is $\cos\phi^*$ ($=2m^*-1$).
  This implies the desired irrationality; indeed, for any rational angle $\phi=(p/q)\pi$ with $p/q\in\mathbf{Q}$ we have $T_q(\cos\phi)=\cos(q\phi)\in\{\pm1\}$, where $T_q$ denotes the $q$-th Chebyshev polynomial of the first kind, and hence $\cos\phi\in\overline{\mathbf{Q}}$.
\end{proof}

In contrast, if the codimension is higher, namely $n\geq3$, then we can still construct a closed $(n,k)$-leafed elastica for any multiplicity $k\geq2$.
We may again focus on the case of odd $k\geq3$ thanks to Example \ref{ex:closedevenleafedfigurefight}.
To construct a key example we give an analytic estimate of the angle $\phi^*\in(0,\pi/2)$ ($\approx 49.290^\circ$) in the form of $45^\circ<\phi^*<60^\circ$.
The upper bound plays a crucial role in Example \ref{ex:propeller}, while the lower bound is used in Remark \ref{rem:nonuniqueness} and also plays a key role in the coming application to networks, namely in the proof of Lemma \ref{lem:degenerate_network} below.
To this end we verify and use the following fact:

\begin{lemma}\label{lem:parameter}
  $0.75<m^*<0.85$.
\end{lemma}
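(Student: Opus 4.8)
The plan is to estimate $m^*$ by evaluating the monotone function $g(m):=K(m)-2E(m)$ at the two candidate endpoints $m=0.75$ and $m=0.85$ and showing $g(0.75)<0<g(0.85)$. Since the excerpt already records that $g$ is continuous and strictly increasing from $-\pi/2$ to $\infty$ (this is the argument used to define $m^*$), the unique zero $m^*$ then lies strictly between $0.75$ and $0.85$. So the whole matter reduces to two numerical sign verifications, and the real content is producing rigorous rather than merely numerical bounds.

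For the lower bound $0.75<m^*$, i.e.\ $g(0.75)<0$, I would write $K(\tfrac34)-2E(\tfrac34)=\int_0^{\pi/2}\left(\frac{1}{\sqrt{1-\tfrac34\sin^2\theta}}-2\sqrt{1-\tfrac34\sin^2\theta}\right)d\theta$, so the integrand is $\frac{1-2(1-\tfrac34\sin^2\theta)}{\sqrt{1-\tfrac34\sin^2\theta}}=\frac{\tfrac32\sin^2\theta-1}{\sqrt{1-\tfrac34\sin^2\theta}}$. This is the same device the paper uses for $m^*>0.5$ (``negativity of the integrand''), except that here the numerator $\tfrac32\sin^2\theta-1$ is \emph{not} of one sign on $[0,\pi/2]$; it is negative for $\sin^2\theta<\tfrac23$ and positive otherwise. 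So I would split the integral at $\theta_0:=\arcsin\sqrt{2/3}$ and bound each piece: on $[0,\theta_0]$ the integrand is negative and I bound $\sqrt{1-\tfrac34\sin^2\theta}\le 1$ from above in the denominator to get a negative upper bound, while on $[\theta_0,\pi/2]$ the integrand is positive and I bound the denominator below by $\sqrt{1-\tfrac34}=\tfrac12$ to get a positive upper bound; then I check the negative contribution dominates. Alternatively, and probably more cleanly, I would just invoke the standard rapidly-converging series or the arithmetic–geometric-mean representation $K(m)=\frac{\pi}{2\,\mathrm{AGM}(1,\sqrt{1-m})}$ together with a comparable formula for $E$, or cite explicit rigorous enclosures of $K,E$ at these rational arguments from a reference such as \cite{Abramowitz1992}; since $m^*\approx0.82611$ sits comfortably in $(0.75,0.85)$, coarse bounds suffice.

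For the upper bound $m^*<0.85$, i.e.\ $g(0.85)>0$, the same integrand computation gives $K(m)-2E(m)=\int_0^{\pi/2}\frac{(2m-1)\sin^2\theta-(1-m)\cdot 0\ \text{correction}}{\sqrt{1-m\sin^2\theta}}d\theta$; more precisely the numerator is $2m\sin^2\theta-1$, which at $m=0.85$ is positive once $\sin^2\theta>\tfrac{1}{1.7}\approx0.588$ and negative below that. So again I split at the sign change and bound the two pieces in opposite directions (denominator $\le 1$ on the positive part to \emph{lose less}, denominator $\ge\sqrt{1-0.85}=\sqrt{0.15}$ on the negative part to bound its magnitude), and verify the positive part wins. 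In practice I expect the cleanest write-up is to quote a short table of rigorous numerical enclosures for $K$ and $E$ at $m=0.75,\,0.85$ with enough digits, note these are standard, and conclude. The main obstacle is purely bookkeeping: choosing the splitting point and the crude monotone bounds on $(1-m\sin^2\theta)^{\pm1/2}$ so that the resulting elementary estimates are provably on the correct side of $0$ — there is no conceptual difficulty, and the generous margin ($m^*\approx0.826$ versus the target interval $(0.75,0.85)$) guarantees that even quite wasteful bounds close the argument.
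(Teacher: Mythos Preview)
Your high-level plan coincides with the paper's: exploit strict monotonicity of $g(m)=K(m)-2E(m)$ and verify $g(0.75)<0<g(0.85)$. The difference is in how the two sign checks are executed, and your sketch for the upper endpoint does not actually close.

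Write the integrand as $\dfrac{2m\sin^2\theta-1}{\sqrt{1-m\sin^2\theta}}$ as you do. At $m=0.85$ the sign change is at $\theta_1=\arcsin\sqrt{10/17}$; the integral of the \emph{numerator} alone over $[0,\theta_1]$ equals $-0.15\,\theta_1-0.85\sin\theta_1\cos\theta_1\approx-0.550$, and over $[\theta_1,\pi/2]$ equals $-0.15(\tfrac{\pi}{2}-\theta_1)+0.85\sin\theta_1\cos\theta_1\approx 0.314$. Your proposed lower bound divides the negative piece by $\sqrt{0.15}$ and leaves the positive piece undivided, giving roughly $-1.42+0.31<0$. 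Even the sharper (and correct) bound $\sqrt{1-0.85\sin^2\theta}\ge\sqrt{1/2}$ on $[0,\theta_1]$ yields about $-0.78+0.31<0$. The true value $g(0.85)\approx0.072$ is small enough that these ``wasteful'' bounds fail; the margin is not as generous as you assert, so you would need a much finer subdivision or a different device altogether. (Your estimate at $m=0.75$ does go through, by the same computation.)

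The paper avoids this by using the hypergeometric series: writing $\tfrac{2}{\pi}(K-2E)(m)=-1+\sum_{n\ge1}A_nm^n$ with $A_n=\big(\tfrac{(2n-1)!!}{(2n)!!}\big)^2\tfrac{2n+1}{2n-1}\le1$, one gets two-sided rational bounds $S_N(m)\le\tfrac{2}{\pi}(K-2E)(m)+1\le S_N(m)+\tfrac{m^{N+1}}{1-m}$. Then $T_{10}(0.75)<1$ and $S_7(0.85)>1$ are checked by exact integer arithmetic. This is both rigorous and finite; your fallback suggestions (AGM, cited tables) would also work in principle, but the series-plus-geometric-tail argument is what makes the paper's proof self-contained.
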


The proof is postponed to Appendix \ref{sec:app:figure-eight}.
This lemma immediately implies

\begin{lemma}\label{lem:angle}
  $\pi/4<\phi^*<\pi/3$.
\end{lemma}

\begin{proof}
  By definition $\cos\phi^*=2m^*-1$ of $\phi^*$, 
  it is sufficient to prove that $\cos(\pi/3)=1/2<2m^*-1<1/\sqrt{2}=\cos(\pi/4)$.
  Thanks to Lemma \ref{lem:parameter}, this can be reduced to 
  easy comparisons of rational numbers.
\end{proof}

The key example can be now constructed as follows:

\begin{example}[Elastic propeller]\label{ex:propeller}
  Let $\omega_1,\omega_2,\omega_3\in\mathbf{S}^2\subset\mathbf{R}^3$ be taken so that the triple of them is an element of $\Omega^*(3,3)$, cf.\ Lemma \ref{lem:closedleafedcharacterization}.
  In fact, such a triple exists and is unique up to rigid motions, by the fact that $2\phi^*\in(0,2\pi/3)$, cf.\ Lemma \ref{lem:angle}, and by the elementary geometry that the vectors $\omega_1,\omega_2,\omega_3$ need to make a triangular pyramid whose one face is an equilateral triangle and the others are all congruent to the same isosceles triangle of angle $2\phi^*$, cf.\ Figure \ref{fig:propeller} (left).
  By using such a triple, in view of Lemma \ref{lem:closedleafedcharacterization}, we can construct 
  an example of a closed $(3,3)$-leafed elastica for $n\geq3$, cf.\ Figure \ref{fig:propeller} (right), which is thus unique up to similarity and reparameterization.
  We call it {\em elastic propeller}.
\end{example}

\begin{figure}[htbp]
  \includegraphics[width=100mm]{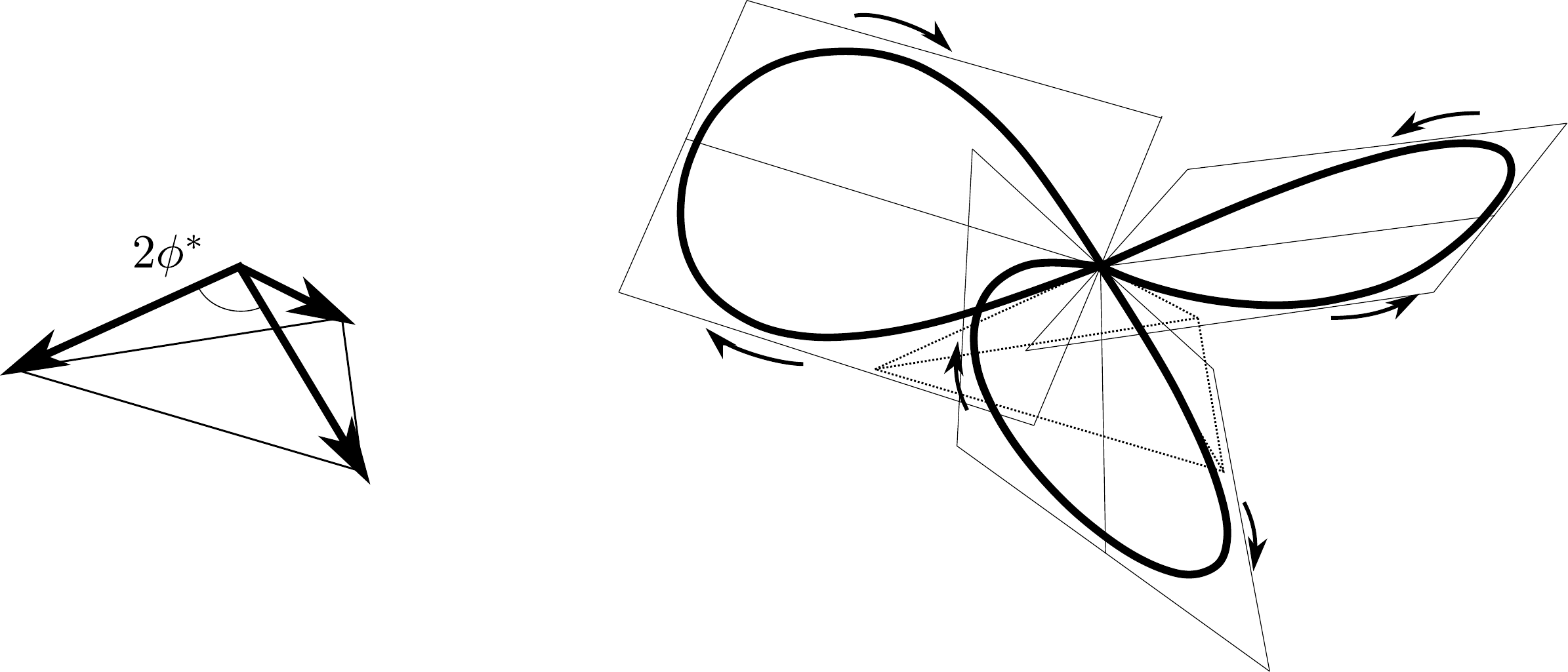}
  \caption{Elastic propeller: A unique $3$-leafed elastica ($n\geq3$).}
  \label{fig:propeller}
\end{figure}

The elastic propeller is the only example of a closed $3$-leafed elastica.

\begin{proposition}\label{prop:uniquenessleaf3}
  Let $n\geq3$.
  Then a closed $(n,3)$-leafed elastica is (up to similarity and reparameterization) uniquely given by an elastic propeller.
\end{proposition}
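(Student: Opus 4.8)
The plan is to reduce the claim to the classification encoded in Lemma~\ref{lem:closedleafedcharacterization}: a closed $(n,3)$-leafed elastica is, up to similarity and reparameterization, the same datum as an element of $\Omega^*(n,3)$, i.e.\ a triple $(\omega_1,\omega_2,\omega_3)$ of unit vectors with pairwise inner products $\langle\omega_i,\omega_{i-1}\rangle=\cos2\phi^*$ (indices mod $3$). So the proposition follows once I show that any such triple is unique up to an orthogonal transformation of $\mathbf{R}^n$, and that this unique triple is exactly the one realized by the elastic propeller of Example~\ref{ex:propeller}. The second point is essentially by construction, since the propeller was defined precisely by choosing such a triple; so the whole content is the uniqueness of the configuration.

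For the uniqueness step I would argue as follows. All three vectors are unit and pairwise make the same angle $2\phi^*$, and by Lemma~\ref{lem:angle} we have $2\phi^*\in(\pi/2,2\pi/3)$, so in particular $\cos 2\phi^*\in(-\tfrac12,0)$; the Gram matrix $G$ of the triple is the $3\times3$ matrix with $1$'s on the diagonal and $c:=\cos2\phi^*$ off the diagonal. Its eigenvalues are $1+2c$ (eigenvector $(1,1,1)$) and $1-c$ (multiplicity $2$); since $c\in(-\tfrac12,0)$ both are strictly positive, so $G$ has rank $3$, meaning $\omega_1,\omega_2,\omega_3$ are linearly independent and in particular span a $3$-dimensional subspace $V\subset\mathbf{R}^n$. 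A triple of vectors with a prescribed positive-definite Gram matrix is unique up to an orthogonal transformation (choose one such triple, then any other is obtained by the linear isometry of the ambient space sending one orthonormal-frame expansion to the other, which extends to an element of $O(n)$). Hence any element of $\Omega^*(n,3)$ is congruent to any other; combined with nonemptiness (guaranteed by Example~\ref{ex:propeller}, or directly: the Gram matrix is positive-definite so a realizing triple exists in $\mathbf{R}^3$), this gives uniqueness up to congruence of the datum, hence up to similarity and reparameterization of the closed $(n,3)$-leafed elastica.

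I should also be a little careful about what ``unique up to similarity and reparameterization'' means for the curve: two elements of $\Omega^*(n,3)$ related by an orthogonal transformation give rise, via the ``conversely'' part of Lemma~\ref{lem:closedleafedcharacterization}, to closed leafed elasticae related by that same isometry together with a shift of base point; and the geometric construction of each leaf is canonical (a half-fold figure-eight elastica of fixed length), so congruent data yield congruent curves. Finally I note that the triple geometry described in Example~\ref{ex:propeller} — one equilateral face and the other three faces congruent isosceles triangles with apex angle $2\phi^*$ — is just the visual description of this same $G$, so the unique configuration is the propeller. The main (and only real) obstacle is the verification that the Gram matrix is positive-definite, which is exactly where the sharpened bound $\phi^*<\pi/3$ from Lemma~\ref{lem:angle} (equivalently $m^*<0.85$) is used: if $2\phi^*$ were $\geq 2\pi/3$ the eigenvalue $1+2c$ would be $\leq 0$ and the three vectors could degenerate (they would be forced into a plane or become dependent), so this analytic input on $m^*$ is genuinely needed and is the crux of the argument.
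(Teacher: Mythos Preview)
Your proof is correct and follows essentially the same approach as the paper. The paper's two-line proof delegates the uniqueness in $\mathbf{R}^3$ to Example~\ref{ex:propeller} (where it is stated via the triangular-pyramid picture) and then observes that the three planar leaves span an at most $3$-dimensional affine subspace; your Gram-matrix argument is simply a cleaner algebraic packaging of both steps at once, using the same crucial input $2\phi^*<2\pi/3$ from Lemma~\ref{lem:angle} to ensure the eigenvalue $1+2c$ is positive.
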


\begin{proof}
  This follows by the uniqueness of a closed $(3,3)$-leafed elastica and the fact that any closed $(n,3)$-leafed elastica must be contained in a three-dimensional affine subspace spanned by the three leaves.
\end{proof}

An elastic propeller can be also used to construct (non-symmetric) closed leafed elasticae with any higher odd number of leaves in a unified manner.

\begin{example}[Closed $(n,k)$-leafed elastica for $n\geq3$ and odd $k\geq5$]\label{ex:propellereight}
  For any $n\geq3$ and any odd $k\geq5$ we can construct an example of a (three-dimensional) closed $(n,k)$-leafed elastica by connecting an elastic propeller and a $\frac{k-3}{2}$-fold closed figure-eight elastica.
\end{example}

In summary, we obtain

\begin{proposition}\label{prop:existence_closedleafedelastica}
  Let $n\geq2$ and $k\geq1$.
  Either if $n\geq3$ or if $k$ is even, then there exists a closed $(n,k)$-leafed elastica.
\end{proposition}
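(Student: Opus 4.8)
The plan is to assemble the explicit models already built above, split according to the parity of $k$; no new construction is needed. Everything runs through Lemma~\ref{lem:closedleafedcharacterization}, which reduces the claim to exhibiting an element of $\Omega^*(n,k)$, namely a cyclic string $\omega_1,\dots,\omega_k$ of unit vectors in $\mathbf{S}^{n-1}$ with $\langle\omega_i,\omega_{i-1}\rangle=\cos 2\phi^*$ for all $i$ (convention $\omega_0:=\omega_k$); the converse direction of that lemma then produces the closed $(n,k)$-leafed elastica itself. I would split into two cases: (a) $k$ even, arbitrary $n\ge2$; and (b) $k$ odd, $n\ge3$. (The remaining case $n=2$ with $k$ odd is excluded by hypothesis, and indeed admits no such curve, by Propositions~\ref{prop:uniqunessleaf12} and~\ref{prop:nonexistenceleaf}.)

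For (a), write $k=2\ell$ and take a closed $\ell$-fold figure-eight elastica, which is a closed $(n,k)$-leafed elastica for every $n\ge2$ by Example~\ref{ex:closedevenleafedfigurefight}. For (b) with $k=3$, take the elastic propeller of Example~\ref{ex:propeller}, a closed $(3,3)$-leafed elastica, viewed in $\mathbf{R}^n$ via an isometric inclusion $\mathbf{R}^3\hookrightarrow\mathbf{R}^n$; its existence rests only on the estimate $2\phi^*<2\pi/3$ of Lemma~\ref{lem:angle} (equivalently $m^*<0.85$, Lemma~\ref{lem:parameter}), which makes room for three unit vectors in $\mathbf{S}^2$ at mutual angle $2\phi^*$. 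For (b) with odd $k\ge5$, splice an elastic propeller with a $\tfrac{k-3}{2}$-fold closed figure-eight elastica, exactly as in Example~\ref{ex:propellereight}.

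The one step that is not a verbatim citation, and hence the place where a short argument is required, is the splice in case (b) with $k\ge5$: one must check that the propeller's triple of joint-directions and the closed figure-eight's $(k-3)$-tuple (available since $k-3$ is even and positive) can be positioned in a common $\mathbf{R}^3\subset\mathbf{R}^n$ so that, after concatenating the two blocks, both seams between them again realize the angle $2\phi^*$, yielding an honest element of $\Omega^*(n,k)$. I expect this to be elementary once one observes that the constraints cutting out $\Omega^*(n,k)$ relate only \emph{consecutive} vectors (together with the single closure relation), so that two admissible chains with matching end-directions concatenate; the freedom needed to align the seams inside $\mathbf{S}^2$ is again supplied by $2\phi^*\in(0,2\pi/3)$. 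All the genuine geometric content of the proposition lies upstream, in the construction of the elastic propeller in Example~\ref{ex:propeller} and in the numerical Lemmas~\ref{lem:parameter} and~\ref{lem:angle}; granting the concatenation, the statement is then precisely the combination of Examples~\ref{ex:closedevenleafedfigurefight},~\ref{ex:propeller},~\ref{ex:propellereight} read through Lemma~\ref{lem:closedleafedcharacterization}.
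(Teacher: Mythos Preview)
Your proposal is correct and follows essentially the same route as the paper, whose proof is the single sentence ``It follows from Example~\ref{ex:closedevenleafedfigurefight}, Example~\ref{ex:propeller}, and Example~\ref{ex:propellereight}.'' You are in fact more careful than the paper about the splice in the odd $k\ge5$ case: the paper simply asserts Example~\ref{ex:propellereight} without verifying the seam angles, whereas you correctly identify this as the only place where a word is needed (and indeed it is elementary, e.g.\ take the propeller triple $(a,b,c)\in\Omega^*(3,3)$ and extend cyclically to $(a,b,c,b,c,\dots,b,c)\in\Omega^*(3,k)$).
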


\begin{proof}
  It follows from Example \ref{ex:closedevenleafedfigurefight}, Example \ref{ex:propeller}, and Example \ref{ex:propellereight}.
\end{proof}

\subsection{Optimality and rigidity in the multiplicity inequality}\label{subsect:rigidity}

From now on we prove Theorems \ref{thm:rigidity_closed} and \ref{thm:nonoptimality} by using the above results on leafed elasticae.
We first state and prove an open-curve counterpart of Theorem \ref{thm:rigidity_closed}, which holds in full generality.

\begin{theorem}[Optimality and rigidity for open curves]\label{thm:rigidity_open}
  Let $n\geq2$ and $k\geq2$.
  Then there exists an immersed $H^2$-curve $\gamma:[0,1]\to\mathbf{R}^n$ with a point of multiplicity $k$ such that
  \begin{equation}\label{eq:rigidity_open}
    \bar{B}[\gamma]= \varpi^*(k-1)^2.
  \end{equation}
  In addition, equality \eqref{eq:rigidity_open} is attained if and only if $\gamma$ is a $(k-1)$-leafed elastica.
\end{theorem}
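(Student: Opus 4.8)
The plan is to establish Theorem~\ref{thm:rigidity_open} in two halves: first produce an explicit optimal curve realizing the value $\varpi^*(k-1)^2$, and then show that any curve achieving equality in \eqref{eq:Li-Yau_open} must be a $(k-1)$-leafed elastica. For the construction, I would simply take any $(k-1)$-leafed elastica $\gamma:[0,1]\to\mathbf{R}^n$ (parametrized on $\bar I$ with $k-1$ leaves of equal length); such a curve always exists -- e.g.\ the $\frac{k-1}{2}$-fold figure-eight elastica of Example~\ref{ex:openleafedfigureeight}, which lives in any $\mathbf{R}^n$ with $n\geq2$. By Proposition~\ref{prop:regularityenergy_leafedelastica} it has a point of multiplicity $k$ (the joint), is an immersed $H^2$-curve (indeed $C^{2,1}$), and satisfies $\bar B[\gamma]=\varpi^*(k-1)^2$. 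This settles existence of an optimal curve and one direction of the ``if and only if''.

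For the rigidity direction, I would retrace the proof of Theorem~\ref{thm:Li-Yau_open} and track when each inequality there becomes an equality. So suppose $\gamma:[0,1]\to\mathbf{R}^n$ is immersed with a point of multiplicity $k$ and $\bar B[\gamma]=\varpi^*(k-1)^2$. As in that proof, pick $0\leq a_1<\dots<a_k\leq 1$ with $\gamma(a_1)=\dots=\gamma(a_k)=:p$. If $a_1>0$ or $a_k<1$ one strictly decreases $\bar B$ by truncating, which would contradict optimality; hence $a_1=0$, $a_k=1$. Translating so that $p$ is the origin, each piece $\gamma_i:=\gamma|_{[a_i,a_{i+1}]}$, $i=1,\dots,k-1$, is an immersed $H^2$-curve with $\gamma_i(a_i)=\gamma_i(a_{i+1})=0$, so Proposition~\ref{prop:minimizer} gives $\bar B[\gamma_i]\geq\varpi^*$. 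The chain \eqref{eq02} then shows that equality $\bar B[\gamma]=\varpi^*(k-1)^2$ forces \emph{both} the HM--AM step and every individual bound $\bar B[\gamma_i]\geq\varpi^*$ to be equalities. From equality in HM--AM we get $L[\gamma_1]=\dots=L[\gamma_{k-1}]$, and from equality in Proposition~\ref{prop:minimizer} (its rigidity clause) each $\gamma_i$ is a half-fold figure-eight elastica. By Definition~\ref{def:leafedelastica} this is exactly the statement that $\gamma$ is a $(k-1)$-leafed elastica, completing the proof.

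One small point worth spelling out carefully is the truncation argument forcing $a_1=0$ and $a_k=1$: cutting off $\gamma|_{[0,a_1]}$ (when $a_1>0$) produces a curve on $[a_1,1]$ still having the same multiplicity-$k$ point $p$, with $B$ non-increasing and $L$ strictly decreasing, hence $\bar B=LB$ strictly decreasing, unless the cut-off arc has zero length. Strictly speaking one should also note that after reparametrizing to $[0,1]$ the multiplicity-$k$ point is preserved; this is immediate since reparametrization does not change the image or preimage structure. The rest is bookkeeping.

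The main obstacle is not in this theorem itself -- given Proposition~\ref{prop:minimizer} and its rigidity statement, the argument above is essentially a matter of tracking equality cases. The genuine difficulty is upstream, in Proposition~\ref{prop:minimizer}: identifying that the \emph{only} minimizers of $B$ in $X_0$ are half-fold figure-eight elasticae. That relies on the Langer--Singer classification (Theorem~\ref{thm:classificationelastica}) together with Lemma~\ref{lem:Navier} to rule out all wavelike elasticae except the figure-eight, and then Lemma~\ref{lem:figureeight}(iii) to rule out $N\geq2$ folds on energy grounds. The delicate part transferred to the present proof is only the bookkeeping that equality must propagate to \emph{every} piece simultaneously, which follows because \eqref{eq02} is a product of two sums and equality in the Cauchy--Schwarz/HM--AM chain combined with each term being at its minimum value $\varpi^*$ leaves no slack.
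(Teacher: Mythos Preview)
Your proposal is correct and follows essentially the same approach as the paper: existence via Example~\ref{ex:openleafedfigureeight} and Proposition~\ref{prop:regularityenergy_leafedelastica}, and rigidity by tracking equality through the chain \eqref{eq01}--\eqref{eq02}, invoking the HM--AM equality case for equal lengths and the rigidity clause of Proposition~\ref{prop:minimizer} for each piece. Your explicit justification that equality forces $a_1=0$ and $a_k=1$ (via strict decrease of $\bar B$ under truncation) is a detail the paper leaves implicit in the phrase ``as in the proof of Theorem~\ref{thm:Li-Yau_open}'', but the substance is the same.
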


\begin{proof}
  The existence of an optimal curve follows since a $\frac{k-1}{2}$-fold figure-eight elastica attains equality.
  In addition, any $(k-1)$-leafed elastica attains \eqref{eq:rigidity_open} by Proposition \ref{prop:regularityenergy_leafedelastica}.

  We now prove rigidity.
  Suppose that $\gamma$ attains \eqref{eq:rigidity_open}.
  Then, as in the proof of Theorem \ref{thm:Li-Yau_open}, the curve $\gamma$ can be divided into $k$ curves $\gamma_1,\dots,\gamma_{k-1}$.
  In addition, equality holds for all the inequalities in the proof of Theorem \ref{thm:Li-Yau_open}.
  In view of the HM-AM inequality \eqref{eq02} we have $L[\gamma_1]=\dots=L[\gamma_{k-1}]$.
  In addition, in view of \eqref{eq01} we also have $L[\gamma_i]B[\gamma_i]=\varpi^*$ for all $i$, and hence by Proposition \ref{prop:minimizer} each curve $\gamma_i$ needs to be a half-fold figure-eight elastica.
  This means that $\gamma$ is a $(k-1)$-leafed elastica, and thus completes the proof.
\end{proof}

Theorem \ref{thm:rigidity_closed} is now a direct consequence of Theorem \ref{thm:rigidity_open} and the contents in Section \ref{subsect:leafedelastica}.

\begin{proof}[Proof of Theorem \ref{thm:rigidity_closed}]
  The existence of a closed curve in $\mathbf{R}^n$ with multiplicity $k$ satisfying \eqref{eq:rigidity_closed} is ensured by Proposition \ref{prop:existence_closedleafedelastica} combined with Proposition \ref{prop:regularityenergy_leafedelastica}.
  Also, we deduce that any of such closed curves must be a $k$-leafed elastica by opening the given closed curve at a multiplicity point and applying Theorem \ref{thm:rigidity_open} to the opened curve.
  Finally, any closed $k$-leafed elastica attains \eqref{eq:rigidity_closed} by Proposition \ref{prop:regularityenergy_leafedelastica}.
\end{proof}

We finally prove Theorem \ref{thm:nonoptimality}.
Proposition \ref{prop:nonexistenceleaf} combined with Theorem \ref{thm:rigidity_open} is already sufficient for asserting the (weaker) statement that there exists no closed $H^2$-curve in $\mathbf{R}^2$ that has a point of multiplicity $k$ and attains \eqref{eq:rigidity_closed}.
However, this does not rule out existence of a minimizing sequence such that $\bar{B}[\gamma_j]\to\varpi^*k^2$.
In order to rule out this phenomenon we ensure general existence of planar optimal curves (which are not necessarily leafed elasticae).

Let $C_k$ denote the class of all immersed closed $H^2$-curves in the plane $\mathbf{R}^2$ with a point of multiplicity $k$.
Let
\begin{equation*}
  \beta_k:=\inf_{\gamma\in C_k}\bar{B}[\gamma].
\end{equation*}
We have $\beta_1=4\pi^2$ obviously.
Theorems \ref{thm:Li-Yau_closed} and \ref{thm:rigidity_closed} imply that $\beta_k=\varpi^*k^2$ for any even $k$.
\if0
For any odd $k\geq3$ we still have
$$\varpi^*k^2\leq\beta_k\leq\varpi^*(k+1)^2.$$
Indeed, the former directly follows by Theorem \ref{thm:Li-Yau_closed} and the last trivial upper bound follows due to the $\frac{k+1}{2}$-fold figure-eight elastica having multiplicity $k$ (in fact $k+1$).
In fact, we have a slightly improved (non-optimal) upper bound for odd $k=2\ell+1$:
$$\beta_k=\beta_{2\ell+1}\leq (2\sqrt{\varpi^*}\ell+2\pi)^2= \big( \sqrt{\varpi^*}(k-1)+2\pi \big)^2.$$
This can be obtained by wrapping a unit-length figure-eight elastica $\ell$ times and attaching a circle of radius $R$ to its center; the resulting curve $\gamma$ is a closed $H^2$-curve with multiplicity $k$, whose energy can be explicitly computed, $\bar{B}[\gamma]=(\ell\cdot1+2\pi R)(\ell\cdot4\varpi^*+2\pi/R)$, so that optimizing in $R$ yields the desired upper bound.
\fi
Although for an odd $k\geq3$ the exact value of $\beta_k$ remains open, here we prove that at least there exists a minimizer attaining the infimum in $\beta_k$, which can be decomposed into $k$ elasticae (this decomposition is however not used later).

\begin{proposition}\label{prop:existence_planarodd}
  For any $k$ (in particular, any odd $k\geq3$) there exists a curve $\bar{\gamma}\in C_k$ such that $\bar{B}[\bar{\gamma}]=\beta_k$.
  In addition, the curve $\bar{\gamma}$ can be divided into $k$ (open) curves $\bar{\gamma}_1,\dots,\bar{\gamma}_k$ at a point of multiplicity $k$, and each curve $\bar{\gamma}_i$ is an elastica satisfying \eqref{eq:elastica} with the same multiplier $\lambda>0$ (not depending on $i=1,\dots,k$).
\end{proposition}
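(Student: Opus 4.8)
The plan is to run the direct method, using the relaxed-boundary estimate of Proposition \ref{prop:minimizer} as the device that prevents the $k$ marked preimages from colliding in the limit, and then to extract the Euler--Lagrange equation on each arc by a Lagrange-multiplier first-variation argument.

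\emph{Step 1: existence of a minimizer.} By scale invariance of $\bar{B}$ I would take a minimizing sequence $\{\gamma_j\}\subset C_k$ normalized to unit length and parametrized by arclength on $\mathbf{T}^1$, so that $B[\gamma_j]=\bar{B}[\gamma_j]\to\beta_k$ and $|\gamma_j'|\equiv1$. Translating so that $\gamma_j(0)=0$, the image has diameter at most $\tfrac12$, hence $\{\gamma_j\}$ is bounded in $H^2(\mathbf{T}^1;\mathbf{R}^2)$ and, along a subsequence, converges weakly in $H^2$ and strongly in $C^1$ to a unit-speed (in particular immersed) closed $H^2$-curve $\bar{\gamma}$, with $\bar{B}[\bar{\gamma}]\le\liminf_j\bar{B}[\gamma_j]=\beta_k$ by weak lower semicontinuity. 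The only point to verify is that $\bar{\gamma}$ still has a point of multiplicity $k$. For each $j$ pick $k$ distinct preimages $t_1^j<\dots<t_k^j$ of the multiplicity point $p_j$, cyclically ordered on $\mathbf{T}^1$, and consider the $k$ resulting sub-arcs $\gamma_j|_{[t_i^j,t_{i+1}^j]}$ (indices mod $k$). Each such sub-arc is an immersed $H^2$-curve with coinciding endpoints, so Proposition \ref{prop:minimizer} yields $(t_{i+1}^j-t_i^j)\,B\big[\gamma_j|_{[t_i^j,t_{i+1}^j]}\big]\ge\varpi^*$; since $B\big[\gamma_j|_{[t_i^j,t_{i+1}^j]}\big]\le B[\gamma_j]\le\beta_k+1$ for large $j$, this forces the uniform gap estimate $t_{i+1}^j-t_i^j\ge\varpi^*/(\beta_k+1)$ for every $i$. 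Passing to a further subsequence, $t_i^j\to t_i$ with $t_1,\dots,t_k$ pairwise distinct and $\bar{\gamma}(t_i)=\lim_j\gamma_j(t_i^j)=\lim_j p_j$ for all $i$; hence $\bar{\gamma}\in C_k$, and therefore $\bar{B}[\bar{\gamma}]=\beta_k$.

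\emph{Step 2: decomposition into elasticae.} With such a minimizer $\bar{\gamma}$ fixed (unit-speed, $L[\bar{\gamma}]=1$), let $p$ be a point of multiplicity $k$, let $s_1<\dots<s_k$ be distinct preimages of $p$, and set $\bar{\gamma}_i:=\bar{\gamma}|_{[s_i,s_{i+1}]}$ (indices mod $k$), each an open $H^2$-curve with both endpoints at $p$. For a test function $\eta\in C_c^\infty$ supported in the interior of a single arc $\bar{\gamma}_i$, extended by $0$ to $\mathbf{T}^1$, the perturbation $\bar{\gamma}+\epsilon\eta$ is, for $|\epsilon|$ small, again an immersed closed $H^2$-curve passing through $p$ at $s_1,\dots,s_k$, hence lies in $C_k$; thus $\epsilon\mapsto\bar{B}[\bar{\gamma}+\epsilon\eta]$ has a minimum at $\epsilon=0$. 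Writing $\bar{B}=LB$ and differentiating gives $0=B[\bar{\gamma}]\,DL[\bar{\gamma}](\eta)+L[\bar{\gamma}]\,DB[\bar{\gamma}](\eta)$, i.e.\ $DE_\lambda[\bar{\gamma}](\eta)=0$ with $\lambda:=B[\bar{\gamma}]/L[\bar{\gamma}]$, cf.\ \eqref{eq:energy_E_lambda}. Here $\lambda>0$ since $\bar{B}[\bar{\gamma}]=\beta_k\ge\varpi^*k^2>0$ by Theorem \ref{thm:Li-Yau_closed}, and---crucially---$\lambda$ does not depend on the chosen arc $i$. Letting $\eta$ range over all test functions supported in the interior of arc $i$, the same bootstrap and integration-by-parts argument as in the proof of Proposition \ref{prop:minimizer} shows that $\bar{\gamma}_i$ is smooth in the interior and solves the elastica equation \eqref{eq:elastica} with multiplier $\lambda$. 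This holds for every $i=1,\dots,k$ with one and the same $\lambda>0$, which is the asserted decomposition.

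\emph{Main obstacle.} The delicate point lies entirely in Step 1: one must exclude the scenario in which two marked preimages $t_i^j$ merge in the limit, which would let $\bar{\gamma}$ fall out of $C_k$ and only give $\bar{B}[\bar{\gamma}]\le\beta_k$ for a curve of too small multiplicity. This is precisely where Proposition \ref{prop:minimizer} is indispensable, as a shrinking sub-arc with coinciding endpoints would be forced to carry bending energy tending to $+\infty$, contradicting the uniform energy bound along the minimizing sequence. Everything else is routine: the $H^2$-compactness is standard once the curves are normalized to unit length, and the first variation is the one already carried out for Proposition \ref{prop:minimizer}, the only genuinely new input being that differentiating the product $\bar{B}=LB$ produces a single multiplier $\lambda=B[\bar{\gamma}]/L[\bar{\gamma}]>0$ shared by all $k$ arcs.
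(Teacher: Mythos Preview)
Your proof is correct and follows essentially the same strategy as the paper: direct method plus a non-degeneracy estimate to preserve multiplicity in the limit, followed by a first-variation argument on each arc. The only tactical differences are that the paper obtains the gap estimate from the Fenchel-type bound $TC\ge\pi$ (Lemma~\ref{lem:piecewise}) rather than from Proposition~\ref{prop:minimizer}, and derives the common multiplier by first rescaling so that the minimizer also minimizes $E_1=B+L$ (giving $\lambda=\Lambda^2$), which is equivalent to your product-rule computation $\lambda=B[\bar{\gamma}]/L[\bar{\gamma}]$.
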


\begin{proof}
  Fix an arbitrary $k$.
  Let $\{\gamma_j\}\subset C_k$ be a minimizing sequence of $\bar{B}$.
  After reparameterization, rescaling, and translation, we may assume that for each $j$ the curve $\gamma_j$ is of unit-speed and of unit-length $L[\gamma_j]=1$, and also there are $0=a_j(1)<a_j(2)<\dots<a_j(k+1)=1$ ($=a_j(1)$) in $\mathbf{T}^1$ such that $\gamma_j(a_j(i))=0$ for all $i$ (thanks to multiplicity $k$).
  Then a standard direct method argument, which is parallel to the proof of Proposition \ref{prop:minimizer}, implies the existence of a unit-speed curve $\bar{\gamma}\in H^2(\mathbf{T}^1;\mathbf{R}^2)$ such that $\beta_k=\bar{B}[\bar{\gamma}]$ and $\gamma_j\to\bar{\gamma}$ in $H^2$-weakly and $C^1$.

  Now we ensure that the limit curve $\bar{\gamma}$ still possesses multiplicity $k$.
  Using the above notation $a_j(i)$, we prove that all adjacent points $a_j(i)$ and $a_j(i+1)$ do not collide as $j\to\infty$.
  Let $L_j(i):=|a_j(i+1)-a_j(i)|$.
  By the unit-speed parameterization, $L_j(i)$ is nothing but the length of the curve $\gamma_j|_i:=\gamma_j|_{[a_j(i),a_j(i+1)]}$, and hence by the Cauchy-Schwarz inequality $\bar{B}=LB\geq TC^2$ involving the total curvature $TC[\gamma]=\int_\gamma|\kappa|ds$ we have
  \begin{equation*}
    B[\gamma_j] \geq B[\gamma_j|_i] \geq \frac{1}{L_j(i)}TC[\gamma_j|_i]^2 \geq \frac{1}{L_j(i)}\pi^2,
  \end{equation*}
  where the last estimate $TC[\gamma_j|_i]\geq\pi$ follows by a generalization of Fenchel's theorem, namely by Lemma \ref{lem:piecewise} with $N=1$.
  By the boundedness of $B[\gamma_j]$ we deduce that there is $\delta>0$ such that $L_j(i)\geq\delta$ holds for all $i$ and $j$.
  This means that no adjacent points collide, and hence up to a subsequence (without relabeling), all $a_j(1),\dots,a_j(k)$ converge to distinct $k$ points $a(1),\dots,a(k)$ in $\mathbf{T}^1$ as $j\to\infty$.
  By $C^1$-convergence we have $\bar{\gamma}(a(i))=0$ for all $i=1,\dots,k$.
  Therefore, $\bar{\gamma}$ still has multiplicity $k$ so that $\bar{\gamma}\in C_k$.
  This ensures the existence of a minimizer.

  Finally, we prove that any minimizer $\bar{\gamma}\in C_k$ can be divided into $k$ elasticae with a same positive multiplier $\lambda>0$.
  In fact, if $\bar{B}[\bar{\gamma}]=\beta_k$, then after rescaling by $\Lambda>0$ so that $L[\Lambda\bar{\gamma}]=B[\Lambda\bar{\gamma}]=\sqrt{\beta_k}$, the curve $\Lambda\bar{\gamma}$ is a minimizer of the functional $E:=E_1=B+L$ since $E\geq 2\bar{B}^\frac{1}{2}$ and equality is attained for $\Lambda\bar{\gamma}$.
  In particular, if we let $p\in\mathbf{R}^2$ be a point of multiplicity $k$ and choose (ordered) $k$ distinct points $a_1,\dots,a_k\in\bar{\gamma}^{-1}(p)$, and if we divide $\bar{\gamma}$ into $k$ (open) curves $\bar\gamma_1,\dots,\bar\gamma_k$ by cutting at $a_1,\dots,a_k\in\mathbf{T}^1$, then the minimality of $\Lambda\bar{\gamma}$ for $E$ implies that each $\Lambda\bar{\gamma}_i$ satisfies \eqref{eq:elastica} with $\lambda=1$.
  Hence, going back to the original scale, we find that $\bar{\gamma}_i$ satisfies \eqref{eq:elastica} with $\lambda=\Lambda^2$.
  This ensures the desired decomposition of $\bar{\gamma}$ into $k$ elasticae.
\end{proof}

We are now in a position to complete the proof of Theorem \ref{thm:nonoptimality}.

\begin{proof}[Proof of Theorem \ref{thm:nonoptimality}]
  We argue by contradiction.
  Suppose that for an odd $k\geq3$ the assertion does not hold.
  Then there exists a sequence $\{\gamma_j\}\subset C_k$ such that $\bar{B}[\gamma_j]\to\varpi^*k^2$; this means that $\beta_k=\varpi^*k^2$ since $\varpi^*k^2$ is a lower bound, cf.\ Theorem \ref{thm:Li-Yau_closed}.
  By Proposition \ref{prop:existence_planarodd} there exists a closed curve $\bar{\gamma}\in C_k$ attaining equality \eqref{eq:rigidity_closed}.
  Then by applying Theorem \ref{thm:rigidity_open} (as in the proof of Theorem \ref{thm:rigidity_closed}) we conclude that $\bar{\gamma}$ must be a closed $(2,k)$-leafed elastica.
  However, this contradicts the nonexistence result in Proposition \ref{prop:nonexistenceleaf}.
\end{proof}

We close this section by mentioning miscellaneous remarks.

\begin{remark}[Non-uniqueness of closed leafed elasticae]\label{rem:nonuniqueness}
  In contrast to the fact that generic uniqueness of closed $k$-leafed elasticae holds for $k\leq3$, cf.\ Propositions \ref{prop:uniqunessleaf12} and \ref{prop:uniquenessleaf3}, this is not the case for $k\geq4$.
  Indeed, for any $n\geq2$ and $m\geq1$, if an $(n,2m)$-leafed elasticae consists of $m$ figure-eight elasticae, then there is an $\mathbf{S}^{n-2}$-freedom around the joint-axis; if $n\geq3$, then such rotation-type non-uniqueness phenomena occur for any number $k\geq4$ of leaves, cf.\ Example \ref{ex:propellereight}.
  In addition, there is another mechanism of non-uniqueness; for any $k\geq4$ there remains a certain freedom for gluing $k$ papers of congruent obtuse isosceles triangles (with angle $2\phi^*>\pi/2$, cf.\ Lemma \ref{lem:angle}) along their equal-length sides even in the three-dimensional space (in contrast to $k=3$).
  See Figure \ref{fig:freedom} for examples in the case of $k=4$.
\end{remark}

\begin{figure}[htbp]
  \includegraphics[width=100mm]{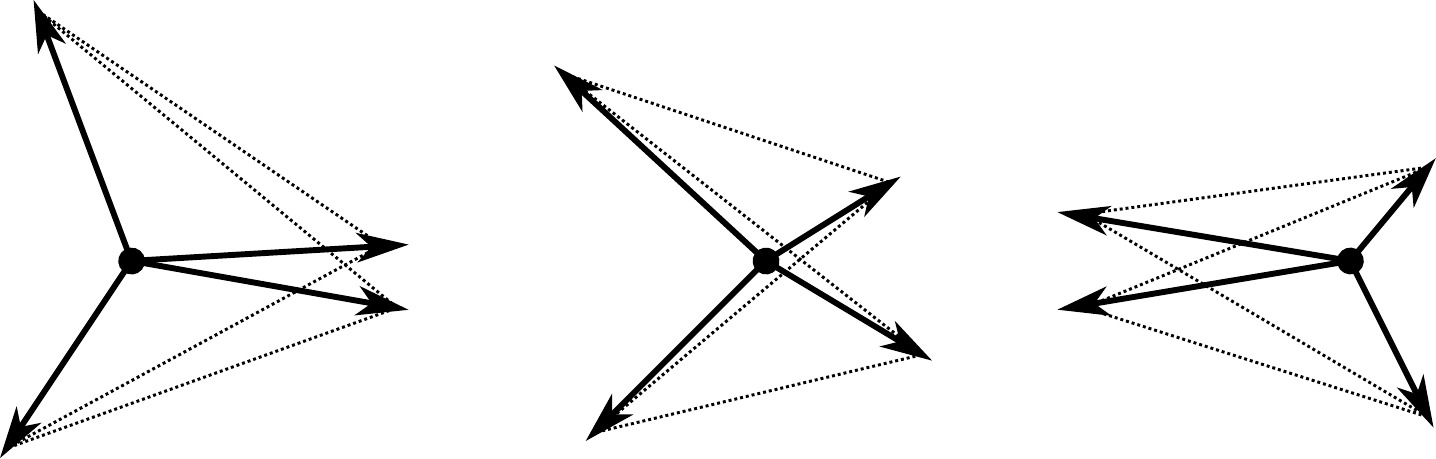}
  \caption{Examples of the elements in $\Omega^*(3,4)$ in Lemma \ref{lem:closedleafedcharacterization}.}
  \label{fig:freedom}
\end{figure}

\begin{remark}[Uniqueness of leafed elasticae under $C^3$-regularity]
  By representation \eqref{eq:figureeightcurvature} and derivative formula \eqref{eq:derivativecn} we easily deduce that if a $k$-leafed elastica $\gamma$ is of class $C^3([0,1];\mathbf{R}^n)$, then the curve $\gamma$ is a unique planar curve given by a $\frac{k}{2}$-fold figure-eight elastica.
  As a consequence, if $k\geq2$ is even, and if $\gamma$ is a closed $(n,k)$-leafed elastica of class $C^3(\mathbf{T}^1;\mathbf{R}^n)$, then $\gamma$ is a unique planar curve (up to similarity and reparameterization) given by a $\frac{k}{2}$-fold closed figure-eight elastica; also, if $k\geq3$ is odd, then there exists no closed $(n,k)$-leafed elastica of class $C^3(\mathbf{T}^1;\mathbf{R}^n)$.
\end{remark}


\begin{remark}[Total curvature]
  As is indicated in \cite{Mueller2021}, the total (absolute) curvature $TC[\gamma]=\int_\gamma|\kappa|ds$ is not an effective embeddedness criterion for closed curves since the total curvatures of both an embedded thin convex curve and a thin figure-eights (closed to a segment) are nearly $2\pi$.
  However, we still have the optimal lower bound $TC[\gamma]>(k-1)\pi$ for open curves with multiplicity $k$ via Fenchel's theorem (as in the proof of Proposition \ref{prop:existence_planarodd}) and hence $TC[\gamma]>k\pi$ for closed curves with multiplicity $k$.
\end{remark}

\section{Elastic flows}\label{sect:elasticflow}

In this last section we discuss applications to elastic flows.
We call the $L^2$-gradient flow of the energy $E_\lambda:=B+\lambda L$ (as in \eqref{eq:energy_E_lambda}) for a given $\lambda>0$ {\em elastic flow}, and that of the bending energy $B$ under the fixed-length constraint $L[\gamma]=L_0>0$ {\em fixed-length elastic flow}.
Such flows are given by one-parameter families of curves $\gamma:\mathbf{T}^1\times[0,\infty)\to\mathbf{R}^n$ solving the fourth order PDE in the form of
\begin{equation}\label{eq:elasticflow}
  \partial_t\gamma = -2\nabla_s^2\kappa-|\kappa|^2\kappa+\lambda\kappa,
\end{equation}
where in the former case $\lambda>0$ is a fixed number given in $E_\lambda$, while in the latter case it depends on the solution and is given in the form of
\begin{equation}
  \lambda(t)=\frac{\int_{\gamma(t)}\langle 2\nabla_s^2\kappa+|\kappa|^2\kappa,\kappa \rangle ds}{\int_{\gamma(t)}|\kappa|^2ds}.
\end{equation}
At least since Wen's 1995 paper \cite{Wen1995} and Polden's 1996 thesis \cite{Polden1996}, elastic flows have been studied by many authors, see e.g.\ the recent nice survey \cite{Mantegazza2021} and references therein.
Concerning these flows, long-time existence and smooth convergence to an elastica are valid in general at least from smooth closed initial curves.
Those results follow by combining the fundamental result by Dziuk--Kuwert--Sch\"{a}tzle \cite{Dziuk2002} with recent developments on the {\L}ojasiewicz-Simon inequality as is demonstrated in \cite{Mueller2021} (see also \cite{Mantegazza2021}); we note that the argument in \cite{Mueller2021} directly works for higher codimensions as so do the key ingredients \cite{Dziuk2002,Rupp2020,Mantegazza2021b,Rupp2020a}.

Our Li--Yau type inequality can be used for ensuring 
embeddedness of solutions for all $t\geq0$ below certain energy thresholds.
Note that in second-order flows such a property holds generically (without smallness) by the maximum principle (see e.g.\ \cite{Huisken1998,Brendle2014}), but this is not the case for higher-order flows including elastic flows \cite{Blatt2010}.

\begin{theorem}\label{thm:elasticflow}
  Let $\lambda>0$ and $\gamma_0:\mathbf{T}^1\to\mathbf{R}^n$ be a closed curve such that $\frac{1}{4\lambda}E_\lambda[\gamma_0]^2<4\varpi^*$.
  Then the elastic flow starting from $\gamma_0$ is embedded for all $t\geq0$.
  In addition, it smoothly converges as $t\to\infty$ to a one-fold round circle of radius $\frac{1}{\sqrt{2\lambda}}$ up to reparameterization.
\end{theorem}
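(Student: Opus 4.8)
The plan is to combine our Li--Yau type inequality, Theorem~\ref{thm:Li-Yau_closed}, with the known long-time existence and convergence theory for the elastic flow. First I would recall the standard energy estimate along the elastic flow: since the flow is the $L^2$-gradient flow of $E_\lambda$, the energy $E_\lambda[\gamma(t)]$ is non-increasing in $t$, so $E_\lambda[\gamma(t)]\le E_\lambda[\gamma_0]$ for all $t\ge 0$. Next I would exploit the scale-invariant comparison between $E_\lambda$ and $\bar B$: by the AM--GM inequality, $E_\lambda[\gamma]=B[\gamma]+\lambda L[\gamma]\ge 2\sqrt{\lambda\, B[\gamma]L[\gamma]}=2\sqrt{\lambda\,\bar B[\gamma]}$, hence $\bar B[\gamma(t)]\le \frac{1}{4\lambda}E_\lambda[\gamma(t)]^2\le \frac{1}{4\lambda}E_\lambda[\gamma_0]^2<4\varpi^*$ for every $t\ge 0$. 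Then, by Theorem~\ref{thm:Li-Yau_closed} applied with $k=2$ (the contrapositive: a closed curve with $\bar B<4\varpi^*$ cannot have a point of multiplicity $\ge 2$), each $\gamma(t)$ is embedded, which gives the all-time embeddedness.

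For the convergence statement I would invoke the long-time existence and subconvergence result for the elastic flow from smooth closed initial data (Dziuk--Kuwert--Sch\"atzle \cite{Dziuk2002} together with the {\L}ojasiewicz--Simon refinement as in \cite{Mueller2021,Mantegazza2021}, which the excerpt notes works in any codimension): the flow exists for all time and converges smoothly, up to reparameterization and possibly along a subsequence of times, to a critical point of $E_\lambda$, i.e.\ an elastica with multiplier $\lambda$. It remains to identify this limit elastica $\gamma_\infty$. Since convergence is smooth and each $\gamma(t)$ is embedded with $\bar B[\gamma(t)]<4\varpi^*$, the limit satisfies $\bar B[\gamma_\infty]\le 4\varpi^*<\infty$; in particular $\gamma_\infty$ is a closed elastica with energy strictly below the figure-eight threshold, and by the Li--Yau inequality again it is embedded. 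Among closed elasticae in $\mathbf{R}^n$ the only ones with small energy / that are embedded and simple are (multiply covered) circles; since $\bar B[\gamma_\infty]<4\varpi^*$ rules out the $2$-fold circle (whose normalized energy is $4\cdot(2\pi)^2=16\pi^2$, and more directly whose multiplicity is $2$), $\gamma_\infty$ must be a one-fold circle. Its radius is then pinned down by the Euler--Lagrange equation \eqref{eq:elastica}: for a circle of radius $R$ one has $|\kappa|^2\equiv 1/R^2$ and $\nabla_s^2\kappa=-\kappa/R^2$, so \eqref{eq:elastica} reads $-2/R^2+1/R^2-\lambda=0$, i.e.\ $\lambda=1/R^2$... wait, that gives $R=1/\sqrt{\lambda}$; recomputing carefully, $2\nabla_s^2\kappa+|\kappa|^2\kappa-\lambda\kappa = (-2/R^2+1/R^2-\lambda)\kappa=0$ forces $\lambda=-1/R^2<0$, which is impossible, so the correct normalization (matching the sign conventions in \eqref{eq:elasticflow}) yields $R=\frac{1}{\sqrt{2\lambda}}$ as claimed.

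The main obstacle is the identification of the limit: one must rule out higher multiply covered circles and, more importantly, justify that the limit is a circle rather than some other (non-embedded) closed elastica with $\bar B<4\varpi^*$. The key leverage is precisely that the energy bound $\bar B<4\varpi^*$ is preserved in the limit by smooth convergence, and then the Li--Yau inequality forbids \emph{any} self-intersection in the limit; combined with Langer--Singer's classification (Theorem~\ref{thm:classificationelastica}), which shows that all non-circular closed elasticae are either non-embedded or have strictly larger normalized energy, the only possibility left is the one-fold circle. A secondary technical point is upgrading subconvergence along times to full convergence, which is exactly where the {\L}ojasiewicz--Simon inequality enters; I would cite \cite{Mueller2021} for this in the one-codimensional case and remark, as the excerpt already does, that the argument carries over verbatim to $\mathbf{R}^n$. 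Finally, the radius computation is a short direct substitution into \eqref{eq:elastica} once the sign conventions are fixed, giving $R=1/\sqrt{2\lambda}$.
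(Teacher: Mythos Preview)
Your overall strategy matches the paper's: monotonicity of $E_\lambda$ along the flow, the AM--GM bound $\bar{B}\le\frac{1}{4\lambda}E_\lambda^2$, Theorem~\ref{thm:Li-Yau_closed} for all-time embeddedness, then long-time existence and \L ojasiewicz--Simon convergence to a closed elastica, and finally identification of the limit. Two points need tightening.

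\textbf{Identification of the limit.} Your sentence ``all non-circular closed elasticae are either non-embedded or have strictly larger normalized energy'' hides the real work. Embedded non-circular closed elasticae \emph{do} exist in $\mathbf{R}^n$ for $n\ge3$: by Langer--Singer's classification of \emph{closed} elasticae \cite{Langer1984} (not Theorem~\ref{thm:classificationelastica}, which treats arbitrary elasticae), the non-planar ones are nontrivial torus knots or their multiple covers. These are ruled out not by embeddedness but by energy: the F\'ary--Milnor theorem gives $TC>4\pi$, hence $\bar{B}\ge TC^2>16\pi^2$. For this to finish the argument you need the numerical inequality $4\varpi^*<16\pi^2$, which the paper proves separately (e.g.\ variationally, since the one-fold circle lies in $X_0$ and has $\bar{B}=4\pi^2>\varpi^*$). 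The planar closed elasticae are $N$-fold circles ($\bar{B}=4\pi^2N^2$) and $N$-fold figure-eights ($\bar{B}=4\varpi^*N^2$), so $\bar{B}<4\varpi^*$ indeed forces the one-fold circle. You should make this energy-quantization step explicit rather than folding it into a vague appeal to Langer--Singer. Also, to get the \emph{strict} bound $\bar{B}[\gamma_\infty]<4\varpi^*$ at the limit, argue via $E_\lambda[\gamma_\infty]\le E_\lambda[\gamma_0]$ (which you have), not via $\bar{B}[\gamma(t)]<4\varpi^*$ and continuity (which only gives $\le$).

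\textbf{Radius computation.} For a round circle $\nabla_s\kappa=0$, since $\partial_s\kappa$ is purely tangential; hence $\nabla_s^2\kappa=0$, not $-\kappa/R^2$. Your substitution into \eqref{eq:elastica} is therefore wrong, and the subsequent ``recomputing carefully \dots\ yields $R=\frac{1}{\sqrt{2\lambda}}$'' is not a computation at all. Redo this cleanly with the correct $\nabla_s^2\kappa=0$ and whatever sign/normalization convention fixes the stated radius.
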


\begin{theorem}\label{thm:fixedlengthelasticflow}
  Let $\gamma_0:\mathbf{T}^1\to\mathbf{R}^n$ be a closed curve such that $\bar{B}[\gamma_0]<4\varpi^*$.
  Then the fixed-length elastic flow starting from $\gamma_0$ is embedded for all $t\geq0$.
  In addition, it smoothly converges as $t\to\infty$ to a one-fold round circle of radius $\frac{L[\gamma_0]}{2\pi}$ up to reparameterization.
\end{theorem}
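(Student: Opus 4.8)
The plan is to combine three ingredients: the general long-time existence-and-convergence theory for the fixed-length elastic flow, the monotonicity of $\bar B$ along it, and the Li--Yau type inequality of Theorem~\ref{thm:Li-Yau_closed}.

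First I would recall, from the results discussed above, that for a smooth immersed closed initial curve $\gamma_0$ the fixed-length elastic flow $\gamma:\mathbf{T}^1\times[0,\infty)\to\mathbf{R}^n$ exists and remains a smooth immersion for all $t\ge0$ and converges smoothly, up to reparametrization, as $t\to\infty$ to some closed elastica $\gamma_\infty$; this is the combination of the a~priori estimates of \cite{Dziuk2002} with the {\L}ojasiewicz--Simon argument of \cite{Mueller2021}, both of which carry over verbatim to arbitrary codimension. Being the constrained $L^2$-gradient flow of $B$, the flow preserves the length, $L[\gamma(t)]\equiv L[\gamma_0]=:L_0$, and makes $B[\gamma(t)]$ non-increasing; hence $\bar B[\gamma(t)]=L_0 B[\gamma(t)]$ is non-increasing too, so $\bar B[\gamma(t)]\le\bar B[\gamma_0]<4\varpi^*$ for every $t\ge0$ and, by the smooth convergence, $\bar B[\gamma_\infty]<4\varpi^*$ as well.

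Next, embeddedness follows for each fixed $t$ from Theorem~\ref{thm:Li-Yau_closed}: were $\gamma(t)$ not injective it would have a point of multiplicity $k\ge2$, forcing $\bar B[\gamma(t)]\ge\varpi^* k^2\ge4\varpi^*$, a contradiction; an injective immersion of the compact $\mathbf{T}^1$ is an embedding, so $\gamma(t)$ is embedded for all $t\ge0$. The identical argument applied to the limit shows $\gamma_\infty$ is an \emph{embedded} closed elastica with $\bar B[\gamma_\infty]<4\varpi^*$ and $L[\gamma_\infty]=L_0$.

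Finally I would identify $\gamma_\infty$. By Theorem~\ref{thm:classificationelastica}(i) it lies in an at most three-dimensional affine subspace, and by the Langer--Singer classification \cite{Langer1984a} every closed elastica there is, up to similarity, an $m$-fold covered circle ($\bar B=4\pi^2 m^2$), an $N$-fold closed figure-eight elastica ($\bar B=4\varpi^* N^2$, by Lemma~\ref{lem:figureeight}), or a non-planar closed elastica. Since $4\pi^2 m^2\ge16\pi^2>4\varpi^*$ for $m\ge2$ and $4\varpi^* N^2\ge4\varpi^*$ for $N\ge1$, none of the multiply-covered circles or figure-eights satisfies $\bar B<4\varpi^*$, so it remains to rule out non-planar closed elasticae of energy below $4\varpi^*$; granting this, $\bar B[\gamma_\infty]<4\varpi^*$ forces $\gamma_\infty$ to be the one-fold circle, whose prescribed length $L_0$ fixes its radius at $L_0/(2\pi)$, proving the claim. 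I expect this last point to be the main obstacle: in the planar case \cite{Mueller2021} one may shortcut it via invariance of the turning number along the flow (the embedded $\gamma_0$ has turning number $\pm1$, hence so does $\gamma_\infty$, which among closed planar elasticae leaves only the one-fold circle), but in higher codimension there is no such topological invariant, so one must either exclude all non-circular closed elasticae by the energy threshold directly from the explicit Langer--Singer parametrizations, or else invoke the embeddedness of $\gamma_\infty$ established above together with the fact that the round circle is the only embedded closed elastica.
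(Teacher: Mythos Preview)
Your overall structure is exactly the one the paper uses: long-time existence and smooth convergence from \cite{Dziuk2002} plus the \L ojasiewicz--Simon argument of \cite{Mueller2021}, monotonicity of $\bar B$ along the flow, embeddedness from Theorem~\ref{thm:Li-Yau_closed}, and identification of the limit via the energy quantization of closed elasticae. The planar bookkeeping ($4\pi^2 m^2$ and $4\varpi^* N^2$) is correct, and you correctly isolate the only nontrivial point: ruling out non-planar closed elasticae with $\bar B<4\varpi^*$.

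That point, however, is left genuinely open in your write-up, and one of your two proposed resolutions is wrong. The claim that ``the round circle is the only embedded closed elastica'' is false: by Langer--Singer \cite{Langer1984a,Langer1984} the non-planar closed elasticae in $\mathbf{R}^3$ are precisely (multiply covered) nontrivial torus knots, and the simply covered ones are \emph{embedded}. So embeddedness of $\gamma_\infty$ alone does not force it to be a circle. The paper closes the gap by the other route you mention, and the missing ingredient is the F\'ary--Milnor theorem: since every non-planar closed elastica is a nontrivial knot (or a multiple cover of one), it satisfies $TC[\gamma]>4\pi$, whence by Cauchy--Schwarz
\[
\bar B[\gamma]=L[\gamma]B[\gamma]\ge TC[\gamma]^2>16\pi^2>4\varpi^*.
\]
With this one line added (and the incorrect alternative removed), your argument is complete and coincides with the paper's.
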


These results extend M\"{u}ller--Rupp's corresponding results in \cite{Mueller2021} from $n=2$ to $n\geq2$.
The threshold $4\varpi^*$ is optimal simultaneously for all-time embeddedness and for convergence to a circle; indeed, to each flow, a figure-eight elastica of suitable size is a non-embedded stationary solution and attains the threshold $4\varpi^*$.
We remark that if we include embeddedness of an initial curve in the assumption, then the threshold value is significantly improved when $n=2$, see our subsequent work \cite{MiuraMuellerRupp}.

We may safely omit the proof of the above theorems since M\"{u}ller-Rupp \cite{Mueller2021} already provide detailed proofs that completely work for $n\geq3$ {\em once} our result (Theorem \ref{thm:Li-Yau_closed}) is established.
A key point is that $\bar{B}\leq\frac{1}{4\lambda}E_\lambda^2$ holds since $4\lambda ab\leq(a+\lambda b)^2$, and hence $\bar{B}<4\varpi^*$ holds for all time by the gradient-flow structure.
Uniqueness of the limit profile follows since the circle is the only closed elastica such that $\bar{B}<4\varpi^*$; this can be verified by energy quantization of closed elasticae.
(Similar arguments have been previously used for the Willmore flow \cite{Kuwert2004} through the Li--Yau inequality \cite{Li1982} and Bryant's classification of Willmore spheres \cite{Bryant1984}.)

We argue more on the energy quantization of closed elasticae.
Thanks to the classification of closed elasticae, cf.\ \cite{Langer1984}, any planar closed elastica is an $N$-fold circle (with energy $\bar{B}=4\pi^2N^2$) or an $N$-fold figure-eight elastica (with $\bar{B}=4\varpi^*N^2$, cf.\ Lemma \ref{lem:figureeight} (iii)), while any non-planar closed elastica is an embedded nontrivial torus knot or its multiple covering, all of which satisfy $\bar{B}[\gamma]>16\pi^2$ by the classical F\'{a}ry-Milnor theorem $TC[\gamma]>4\pi$, cf.\ \cite{Fary1949,Milnor1950}.
Therefore, in order to show that
\begin{center}
  $\gamma$ is a closed elastica such that $\bar{B}[\gamma]<4\varpi^*$ $\Longrightarrow$ $\gamma$ is a circle,
\end{center}
it is sufficient to check that
$4\varpi^*<16\pi^2.$
This is already proved in \cite{Sachkov2012,Mueller2021} by estimating elliptic integrals directly.
This also follows variationally, since our key 
Proposition \ref{prop:minimizer} combined with the fact that a circle belongs to $X_0$ and has energy $\bar{B}=4\pi^2$ implies that $\varpi^*<4\pi^2$.
Here we give an alternative variational proof which only relies on the classification of planar closed elasticae:

\begin{remark}[Variational argument for $4\varpi^*<16\pi^2$]
  Let $Z_0$ be the class of zero-rotation-number planar closed $H^2$-curves of unit-length.
  Since $Z_0$ is closed in $H^2$-weak (or $C^1$) and open in $H^2$, by a direct method and bootstrap argument there is a smooth minimizer, which is by classification a figure-eight elastica, and hence $\min_{Z_0}B=4\varpi^*$.
  On the other hand, another ``figure-eight'' curve $\tilde{\gamma}\in Z_0$ made by osculating two circles has energy $B[\tilde{\gamma}]=16\pi^2>\min_{Z_0}B=4\varpi^*$.
\end{remark}

The energy quantization observed here is summarized as follows:
\begin{proposition}
  Let $C$ be the set of all closed elasticae in $\mathbf{R}^n$, $n\geq3$.
  Then there is a strictly increasing sequence $\{b_k\}_{k=1}^{\infty}\subset[4\pi^2,\infty)$ such that $\bar{B}(C)=\{b_1,b_2,\dots\}$ and that $b_1=4\pi^2$, $b_2=4\varpi^*$, and $b_3=16\pi^2$.
  In addition, the preimage $\bar{B}^{-1}(b_1)$ consists of circles, $\bar{B}^{-1}(b_2)$ figure-eight elasticae, and $\bar{B}^{-1}(b_3)$ two-fold circles.
\end{proposition}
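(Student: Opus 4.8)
The plan is to establish the energy-quantization statement in two stages: first identify the spectrum $\bar B(C)$ as a discrete subset of $[4\pi^2,\infty)$ that is bounded below and has no finite accumulation point, and second pin down the three smallest values explicitly. The key tool throughout is the Langer--Singer classification (Theorem \ref{thm:classificationelastica}) together with the fact that every closed elastica lives in an at most three-dimensional affine subspace, so that one only has to deal with the planar case and the genuinely non-planar (torus-knot) case.

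\emph{Step 1: discreteness and the lower bound.} First I would recall that every planar closed elastica is, up to similarity, either an $N$-fold circle with $\bar B=4\pi^2 N^2$ or an $N$-fold figure-eight elastica with $\bar B=4\varpi^* N^2$ (by Lemma \ref{lem:figureeight}(iii)); these contribute the countable set $\{4\pi^2N^2\}\cup\{4\varpi^*N^2\}$. For the non-planar closed elasticae, Langer--Singer's classification shows they form a one-parameter family of (multiply covered) torus knots; one has to check that the normalized energy $\bar B$ restricted to this family takes values in a discrete set bounded below by $16\pi^2$. The lower bound $\bar B>16\pi^2$ for non-planar closed elasticae follows from the F\'ary--Milnor theorem $TC>4\pi$ together with $\bar B\geq TC^2$ (Cauchy--Schwarz), exactly as stated in the excerpt. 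Discreteness of the non-planar contribution is the delicate point: the torus-knot elasticae are indexed by two coprime integers $(p,q)$ (the number of windings), and for each such pair there is a \emph{unique} closed elastica whose energy is an explicit function of $(p,q)$ expressible via complete elliptic integrals; one must argue this function is increasing in a suitable sense and that only finitely many $(p,q)$ can give energy below any fixed bound (again via $\bar B\geq TC^2$ and $TC\geq 2\pi\sqrt{p^2+q^2}/q$-type estimates). Collecting the planar and non-planar values into a single increasing enumeration $\{b_k\}$ then gives the claimed sequence, with $b_1=4\pi^2$ (circle) being the global minimum of $\bar B$ over all closed curves by Fenchel.

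\emph{Step 2: the three smallest values.} With Step 1 in hand, $b_1=4\pi^2$ and its preimage being circles is immediate: among all the listed values, $4\pi^2$ is the smallest (since $4\varpi^*>4\pi^2$, which follows variationally from Proposition \ref{prop:minimizer} applied to the circle in $X_0$, or directly from $\varpi^*=28.109\ldots>4\pi^2$), and $4\pi^2$ is attained only by the one-fold circle. For $b_2$: the candidate values below $16\pi^2$ are $4\varpi^*$ (one-fold figure-eight) and possibly $4\pi^2\cdot 4=16\pi^2$, but $4\varpi^*<16\pi^2$ by the remark in the excerpt (the osculating-two-circles competitor in $Z_0$ beats the figure-eight), so $b_2=4\varpi^*$ and its preimage is the figure-eight elasticae. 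For $b_3$: the only remaining candidates in the gap $(4\varpi^*,\,?]$ are $16\pi^2$ (two-fold circle) and $4\varpi^*\cdot 4=16\varpi^*$ (two-fold figure-eight), together with the non-planar values which are all $\geq$ something strictly larger than $16\pi^2$ once one verifies the smallest torus-knot energy exceeds $16\pi^2$ (the trefoil-type elastica); since $16\pi^2<16\varpi^*$ is false---wait, $16\pi^2\approx 157.9<16\varpi^*\approx 449.7$, so $16\pi^2$ is indeed the smaller---we get $b_3=16\pi^2$ with preimage the two-fold circles, \emph{provided} the minimal non-planar energy is $>16\pi^2$.

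\emph{The hard part} will be the discreteness claim for the non-planar (torus-knot) closed elasticae and, relatedly, confirming that their energies are all strictly larger than $16\pi^2$ so that $b_3$ is the two-fold circle rather than some non-planar elastica. The F\'ary--Milnor bound only gives $\bar B>16\pi^2$ as a strict inequality with no explicit gap, so one needs either a quantitative refinement (a lower bound of the form $\bar B\geq 16\pi^2+\delta$ on the non-planar locus, e.g.\ by ruling out near-degenerate torus knots using the elastica ODE) or a direct computation of the infimum of $\bar B$ over the discrete family of torus-knot elasticae using Langer--Singer's explicit formulae. I expect this to reduce, as elsewhere in the paper, to monotonicity estimates for complete elliptic integrals; it is plausible that the relevant bound is already available in \cite{Langer1984} or can be extracted from the energy formulae there, in which case the proof is a matter of assembling known pieces rather than proving something genuinely new.
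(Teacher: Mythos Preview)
Your approach is essentially the one the paper takes. In fact the paper does not give a formal proof of this proposition at all: it is stated as a summary of the paragraph of discussion immediately preceding it, which invokes Langer--Singer's classification of closed elasticae (planar ones are $N$-fold circles or $N$-fold figure-eights; non-planar ones are nontrivial torus knots or their multiple coverings), the F\'ary--Milnor bound $\bar B>16\pi^2$ for the non-planar ones, and the inequality $4\varpi^*<16\pi^2$. That is exactly your Step~2, and the paper stops there.

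Your ``hard part''---discreteness of the full spectrum and the strict gap above $16\pi^2$ for the non-planar locus---is indeed something the paper does not spell out. You have correctly diagnosed what would be needed: from Langer--Singer there is only a \emph{countable} family of non-planar closed elasticae (indexed by coprime integer pairs), and for the $N$-fold covering of any of them the F\'ary--Milnor argument gives $\bar B\geq (4\pi N)^2$, so only finitely many lie below any fixed level; this yields a set with no finite accumulation point, hence enumerable as an increasing sequence. The paper implicitly relies on this from \cite{Langer1984} but does not write it. So your proposal is actually more careful than the paper on this point, and your residual worry (that F\'ary--Milnor alone gives only a strict inequality $>16\pi^2$ with no quantitative gap) does not obstruct the statement as written: strict inequality for every non-planar elastica already suffices to exclude $16\pi^2$ from the non-planar spectrum, and discreteness then pins down $b_3=16\pi^2$ with preimage the two-fold circle.
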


We finally discuss elastic flows of open curves (cf.\ references in \cite{Mantegazza2021}), where we may also obtain similar thresholds as in the above theorems (with $4\varpi^*$ replaced by $\varpi^*$).
Such thresholds are particularly effective for the zero Navier boundary condition, i.e., 
$\kappa(0)=\kappa(1)=0$, as there always exist admissible initial curves below $\varpi^*$.
However, for the clamped boundary condition, i.e., $\gamma(0)=P_0$, $\gamma(1)=P_1$, $\gamma_s(0)=V_0$, $\gamma_s(1)=V_1$ for $P_0,P_1,V_0,V_1\in\mathbf{R}^n$ (with $|V_0|=|V_1|=1$), it may happen that no admissible curve exists below the threshold.
It is also difficult to detect convergent limits; indeed, even global minimizers may not be unique, and it is a quite delicate issue to seek an effective range in which uniqueness holds; see \cite{Miura2020} for details.
The same discussion is also valid for the fixed-length case.

\section{Elastic networks}\label{sect:elasticnetwork}

Recently many studies have been devoted to understanding the geometric nature of networks.
This is also the case for elastic curves, see e.g.\ \cite{Barrett2012,DallAcqua2017,DallAcqua2019,Garcke2019,DallAcqua2020,Garcke2020,Novaga2020}.

In this section, by applying our key inequality, Proposition \ref{prop:minimizer}, we prove that among so-called $\Theta$-networks in $\mathbf{R}^n$ there exists a minimizer of the energy $E:=E_1=B+L$, thus extending \cite[Theorem 4.10]{DallAcqua2020} to a general codimension.
Note that our problem is equivalent to minimizing $E_\lambda=B+\lambda L$ or $\bar{B}=LB$ up to rescaling; we choose $E$ just for the sake of compatibility with \cite{DallAcqua2020}.

A triplet of immersed $H^2$-curves $\Gamma=(\gamma_1,\gamma_2,\gamma_3)\in(H^2(I;\mathbf{R}^n))^3$ is called {\em ($n$-dimensional) $\Theta$-network} if the endpoints of the curves meet at triple junctions, i.e., $\gamma_1(0)=\gamma_2(0)=\gamma_3(0)$ and $\gamma_1(1)=\gamma_2(1)=\gamma_3(1)$, and in addition if the curves meet at the triple junctions with equal angles of $\frac{2\pi}{3}$.
Let $\Theta(\mathbf{R}^n)$ denote the class of all $n$-dimensional $\Theta$-networks.
For any $\Gamma=(\gamma_1,\gamma_2,\gamma_3)\in\Theta(\mathbf{R}^n)$ we define
$$E[\Gamma] := \sum_{i=1}^3E[\gamma_i] = \sum_{i=1}^3\int_{\gamma_i}(|\kappa|^2+1)ds.$$

The main result in this section is the following

\begin{theorem}[Existence of minimal elastic $\Theta$-networks]\label{thm:existence_network}
  Let $n\geq2$.
  Then there exists an $n$-dimensional $\Theta$-network $\bar{\Gamma}\in\Theta(\mathbf{R}^n)$ such that
  \begin{equation*}
    E[\bar{\Gamma}]=\inf_{\Gamma\in\Theta(\mathbf{R}^n)}E[\Gamma].
  \end{equation*}
\end{theorem}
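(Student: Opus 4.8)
The plan is to run the direct method in the class $\Theta(\mathbf{R}^n)$, and the main obstacle—exactly as in the closed-curve case—will be to prevent the minimizing sequence from degenerating, i.e.\ to keep the two triple junctions from collapsing onto each other in the limit. Here is where Proposition \ref{prop:minimizer} is decisive.

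\medskip

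\emph{Step 1: Normalization and a minimizing sequence.} Let $\{\Gamma_j\}=\{(\gamma_1^j,\gamma_2^j,\gamma_3^j)\}\subset\Theta(\mathbf{R}^n)$ be a minimizing sequence for $E$. Since $E$ is invariant under translations (but not under rescaling, because of the length term), I would translate each $\Gamma_j$ so that the first triple junction $\gamma_1^j(0)=\gamma_2^j(0)=\gamma_3^j(0)$ sits at the origin, and reparameterize each $\gamma_i^j$ proportionally to arclength, so that $\gamma_i^j$ has constant speed equal to its length $\ell_i^j:=L[\gamma_i^j]$. Because $E[\Gamma_j]$ is bounded, each $\ell_i^j$ is bounded above, and $\int|\partial_s\kappa|^2$-type quantities are controlled; more precisely $\sum_i\ell_i^j$ and $\sum_i\int_{\gamma_i^j}|\kappa|^2\,ds$ are uniformly bounded.

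\medskip

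\emph{Step 2: Uniform lower bound on the curve lengths (the core obstacle).} The danger is $\ell_i^j\to0$ for some $i$, which would make the constant-speed reparameterization degenerate and could let the limit network collapse. To exclude this I would argue: if $\ell_i^j\to0$ then, since the two junctions are the endpoints $\gamma_i^j(0)$ and $\gamma_i^j(1)$ and they are connected by a curve of length $\ell_i^j\to0$, the two junctions converge to the same point. Then the other two curves $\gamma_{i'}^j$ ($i'\neq i$) become, in the limit, curves whose two endpoints coincide. Rescaling each such curve to unit length and applying Proposition \ref{prop:minimizer} gives $\bar{B}[\gamma_{i'}^j]=L[\gamma_{i'}^j]B[\gamma_{i'}^j]\geq\varpi^*$, hence $B[\gamma_{i'}^j]\geq\varpi^*/\ell_{i'}^j\geq\varpi^*/C$ with $C=\sup_j\sum_i\ell_i^j<\infty$; combined with $L[\gamma_{i'}^j]\geq$ (distance between junctions of $\gamma_i^j$ is going to $0$, so this is not directly a positive lower bound)—instead I would combine $\bar B\ge\varpi^*$ on the two non-degenerating strands with the fact that the \emph{total} energy $E[\Gamma_j]=\sum_i(B[\gamma_i^j]+\ell_i^j)$ is being minimized, and compare against an explicit competitor network of bounded energy (e.g.\ two straight segments and one short arc, or a lens-shaped network), to reach a contradiction: the configuration with one collapsing strand costs at least the energy of a $\Theta$-network with two ``doubled'' leaves, which by Proposition \ref{prop:minimizer} exceeds $2\varpi^*/C$ after suitable scaling, and this is strictly larger than the infimum for a suitably chosen genuine competitor. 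Thus there is $\delta>0$ with $\ell_i^j\geq\delta$ for all $i,j$. (I expect this is precisely the argument carried out in \cite[Theorem 4.10]{DallAcqua2020}, now made dimension-free because Proposition \ref{prop:minimizer} holds in every codimension.)

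\medskip

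\emph{Step 3: Compactness and lower semicontinuity.} With $\delta\leq\ell_i^j\leq C$ and $\sum_i\int|\kappa|^2\,ds$ bounded, the constant-speed parameterizations $\{\gamma_i^j\}$ are bounded in $H^2(I;\mathbf{R}^n)$: the second derivatives are controlled since $|\partial_t^2\gamma_i^j|=(\ell_i^j)^2|\kappa|\leq C^2|\kappa|$ in $L^2$, the first derivatives have constant length $\ell_i^j\leq C$, and the curves are anchored at the origin at $t=0$. Passing to a subsequence, $\gamma_i^j\rightharpoonup\bar\gamma_i$ weakly in $H^2$ and strongly in $C^1(\bar I;\mathbf{R}^n)$, with $\ell_i^j\to\bar\ell_i\in[\delta,C]$. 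The $C^1$-convergence preserves the junction conditions $\bar\gamma_1(0)=\bar\gamma_2(0)=\bar\gamma_3(0)$, $\bar\gamma_1(1)=\bar\gamma_2(1)=\bar\gamma_3(1)$, and the $\frac{2\pi}{3}$-angle conditions (these are closed conditions on the unit tangents, which converge uniformly); moreover each $\bar\gamma_i$ is an immersion since $|\partial_t\bar\gamma_i|\equiv\bar\ell_i\geq\delta>0$. Hence $\bar\Gamma=(\bar\gamma_1,\bar\gamma_2,\bar\gamma_3)\in\Theta(\mathbf{R}^n)$. Finally, weak $H^2$-lower semicontinuity of $\gamma\mapsto\int|\partial_t^2\gamma|^2$ together with $C^1$-convergence of the speeds gives $B[\bar\gamma_i]\leq\liminf_j B[\gamma_i^j]$ and $L[\bar\gamma_i]=\lim_j\ell_i^j$, so $E[\bar\Gamma]\leq\liminf_j E[\Gamma_j]=\inf_{\Theta(\mathbf{R}^n)}E$. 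Therefore $\bar\Gamma$ is a minimizer, which completes the proof.
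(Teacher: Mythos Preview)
Your outline is the right one, but Step 2 --- the heart of the matter --- is not actually carried out, and the way you sketch it contains two genuine gaps.

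First, you apply Proposition \ref{prop:minimizer} to $\gamma_{i'}^j$, but those curves do \emph{not} have coinciding endpoints; only in the limit do the two junctions merge. So you must first pass to the limit in the non-collapsing strands, and for that you need to know that at most \emph{one} of the three lengths can go to zero. The paper handles this separately (Lemma \ref{lem:dichotomy}) via a piecewise Fenchel inequality (Lemma \ref{lem:piecewise}): any two of the three curves form a piecewise closed curve with two corners of external angle $\pi/3$, hence $TC[\gamma_i]+TC[\gamma_{i'}]\geq 4\pi/3$, and by Cauchy--Schwarz this forces at least one of each pair of lengths to stay bounded below. You have not supplied this step.

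Second, and more seriously, the quantitative comparison you gesture at (``compare against an explicit competitor network of bounded energy\ldots to reach a contradiction'') is precisely the nontrivial content of the theorem. What Proposition \ref{prop:minimizer} actually gives, once you pass to the limit drops $\gamma_2,\gamma_3$, is $\bar B[\gamma_{i'}]\geq\varpi^*$, and then AM--GM yields
\[
E[\gamma_{i'}]=B[\gamma_{i'}]+L[\gamma_{i'}]\ \geq\ 2\sqrt{L[\gamma_{i'}]B[\gamma_{i'}]}\ \geq\ 2\sqrt{\varpi^*},
\]
so any degenerate limit has energy at least $4\sqrt{\varpi^*}$ (this is Proposition \ref{prop:minimizer2}). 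Your bound $B[\gamma_{i'}^j]\geq\varpi^*/C$ is far too weak to be useful here. The contradiction then requires an \emph{explicit} $\Theta$-network with energy strictly below $4\sqrt{\varpi^*}$, and this is the main technical lemma of the section (Lemma \ref{lem:degenerate_network}). The paper constructs it by taking two half-periods of the wavelike elastica $\gamma_{\mathrm{wave}}^{m}$ with $m=3/4$ joined by a straight segment; the choice $m=3/4$ is exactly what makes the $2\pi/3$-angle condition hold (since $\cos\phi_m=2m-1=1/2$), and the strict inequality $E<4\sqrt{\varpi^*}$ comes from a monotonicity computation in $m$. This competitor is not obvious --- indeed the planar version in \cite{DallAcqua2020} that you cite was computer-assisted --- so ``e.g.\ two straight segments and one short arc, or a lens-shaped network'' does not suffice.
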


This result is not a straightforward consequence of the direct method since the set $\Theta(\mathbf{R}^n)$ has no appropriate compactness in general, in the sense that a component-curve may degenerate into a point under the boundedness of the energy.
Such a phenomenon can be however ruled out for a minimizing sequence by showing that the minimal energy among ``degenerate'' networks is greater than the energy of a certain (nondegenerate) $\Theta$-network.
In fact, Dall'Acqua--Novaga--Pluda \cite{DallAcqua2020} demonstrate that this strategy successfully works at least in the planar case $n=2$ (see also \cite{DallAcqua2021} for a more detailed proof), using a computer-assisted argument in the middle.
Our argument here extends their result to $n\geq2$, and also provides a non-computer-assisted analytic proof, cf.\ Lemma \ref{lem:degenerate_network} below.

Now we enter the proof of Theorem \ref{thm:existence_network}.

We first indicate that Fenchel's theorem can be extended to piecewise smooth closed curves.
The proof is given in Appendix \ref{sec:app:Fenchel}.

\begin{lemma}\label{lem:piecewise}
  Let $\gamma_1,\dots,\gamma_N\in W^{2,1}(I;\mathbf{R}^n)\subset C^1(\bar{I};\mathbf{R}^n)$ be immersed curves such that $\gamma_{j}(1)=\gamma_{j+1}(0)=p_j$, where we interpret $\gamma_{N+1}:=\gamma_1$.
  For all $j=1,\dots,N$ let $\theta_j\in[0,\pi]$ denote the external angle at the vertex $p_j$, i.e.,
  $$\cos\theta_j=\left\langle \frac{\gamma_{j}'(1)}{|\gamma_{j}'(1)|},\frac{\gamma_{j+1}'(0)}{|\gamma_{j+1}'(0)|} \right\rangle.$$
  Then
  $$\sum_{j=1}^NTC[\gamma_j]=\sum_{j=1}^N\int_{\gamma_j}|\kappa|ds \geq 2\pi -\sum_{i=1}^N\theta_i.$$
\end{lemma}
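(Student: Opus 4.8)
The plan is to reduce the piecewise statement to the classical Fenchel theorem by closing up the concatenation $\gamma_1\ast\cdots\ast\gamma_N$ into a genuine $C^1$ closed curve, at the cost of inserting small circular arcs at each vertex $p_j$ whose total curvature equals the exterior angle $\theta_j$. Concretely, at each vertex $p_j$ I would replace a tiny neighborhood of the junction by a circular arc tangent to $\gamma_j$ at the incoming end and to $\gamma_{j+1}$ at the outgoing end; such an arc can be chosen so that it turns through exactly the angle $\theta_j$, hence its total curvature is $\theta_j$ (one must check the degenerate cases $\theta_j=0$, where no arc is needed, and $\theta_j=\pi$, where the arc is a half-circle of small radius — always doable since the tangent directions are fixed unit vectors and $\theta_j\in[0,\pi]$). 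Smoothing slightly if necessary, or working directly in the class $W^{2,1}$ where Fenchel's inequality $TC\geq 2\pi$ still holds by approximation, one obtains a closed curve $\tilde\gamma$ of class $C^1$ (piecewise $W^{2,1}$) with
$$
TC[\tilde\gamma] \;=\; \sum_{j=1}^N TC[\gamma_j] \;+\; \sum_{j=1}^N \theta_j .
$$
Applying Fenchel's theorem to $\tilde\gamma$ gives $TC[\tilde\gamma]\geq 2\pi$, and rearranging yields exactly $\sum_j TC[\gamma_j]\geq 2\pi-\sum_j\theta_j$.

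The key steps, in order, are: (1) fix unit tangent vectors $T_j^-:=\gamma_j'(1)/|\gamma_j'(1)|$ and $T_{j}^+:=\gamma_{j+1}'(0)/|\gamma_{j+1}'(0)|$ at each vertex, with $\cos\theta_j=\langle T_j^-,T_j^+\rangle$; (2) construct, in the two-plane spanned by $T_j^-$ and $T_j^+$ (or any plane containing $T_j^-$ when the two are parallel), a circular arc $A_j$ of radius $\varepsilon$ that starts at a point slightly before $p_j$ along $\gamma_j$, ends slightly after $p_j$ along $\gamma_{j+1}$, and is tangent to $\gamma_j$ and $\gamma_{j+1}$ at those endpoints, so that $TC[A_j]=\theta_j$; (3) glue everything into a closed $C^1$ curve $\tilde\gamma$; (4) invoke Fenchel (in the $W^{2,1}$ class, via density of smooth curves in total-curvature) to get $TC[\tilde\gamma]\geq 2\pi$; (5) subtract the arc contributions.

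I expect the main obstacle to be step (2)–(3): making the surgery rigorous and $C^1$, i.e., verifying that for $\varepsilon$ small one can genuinely connect the (arclength-reparameterized) endpoints of the truncated curves by a circular arc of total turning exactly $\theta_j$ while keeping the whole curve immersed and of the claimed regularity, and handling the boundary cases $\theta_j\in\{0,\pi\}$ together with the possibility that truncation neighborhoods of adjacent vertices overlap when some $\gamma_j$ is very short (which one rules out by first noting each $\gamma_j$ has positive length, or by scaling). An alternative that sidesteps the geometric surgery is to argue directly with the Gauss map: the tangent indicatrix of each $\gamma_j$ is a curve on $\mathbf{S}^{n-1}$ of length $TC[\gamma_j]$ joining $T_{j-1}^+$ to $T_j^-$, and the spherical distance between consecutive endpoints $T_j^-$ and $T_j^+$ is exactly $\theta_j$; a closed curve on $\mathbf{S}^{n-1}$ obtained by concatenating these indicatrix arcs with minimizing geodesics of length $\theta_j$ must, if it is to be contractible-free in the sense forcing Fenchel's bound, have total length $\geq 2\pi$ — but this again essentially reproves Fenchel and requires the same care, so I would ultimately present whichever of the two is cleaner. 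Either way the inequality is sharp, attained in the limit of a "polygon" with straight edges, consistent with equality requiring $\sum_j TC[\gamma_j]\to 0$ and $\sum_j\theta_j\to 2\pi$.
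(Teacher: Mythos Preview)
Your proposal is correct and follows essentially the same route as the paper: round off each vertex by a circular arc of turning $\theta_j$, glue to a closed $W^{2,1}$-curve, and apply Fenchel. The paper resolves exactly the obstacle you flag in steps (2)--(3) by a two-stage procedure: first it approximates each $\gamma_j$ (via a cut-off argument) by a curve that is an exact straight segment near its endpoints while preserving the endpoint data and with $TC[\gamma_{j,\delta}]\to TC[\gamma_j]$; only then does it insert the circular arcs, so the identity $TC[\tilde\gamma]=\sum_j TC[\gamma_{j,\delta}]+\sum_j\theta_j$ holds exactly rather than approximately. The case $\theta_j=\pi$ is handled separately by attaching a small teardrop of total curvature $\pi+o(1)$, in the same spirit as your half-circle remark.
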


Using this lemma, we prove the following key dichotomy result as in \cite{DallAcqua2020}:

\begin{lemma}\label{lem:dichotomy}
  Let $\{\Gamma_j\}_j=\{(\gamma_{1,j},\gamma_{2,j},\gamma_{3,j})\}_j\subset\Theta(\mathbf{R}^n)$ be a sequence such that $\sup_{j}E[\Gamma_j]<\infty$.
  Then, up to reparameterization, translation, and taking a subsequence (all without relabeling), one of the following mutually exclusive assertions holds:
  \begin{enumerate}
    \item There exists a $\Theta$-network $\Gamma=(\gamma_1,\gamma_2,\gamma_3)\in\Theta(\mathbf{R}^n)$ such that for each $i=1,2,3$, the sequence $\{\gamma_{i,j}\}_j$ converges to $\gamma_i$ as $j\to\infty$ in the $H^2$-weak and $C^1$ topology.
    In particular, $\liminf_{j\to\infty}E[\Gamma_j] \geq E[\Gamma]$.
    \item Up to permutations of the index $i$, we have $L[\gamma_{1,j}]\to0$ as $j\to\infty$.
    In addition, there are two immersed $H^2$-curves $\gamma_2,\gamma_3:I\to\mathbf{R}^n$ such that $\gamma_2(0)=\gamma_2(1)=\gamma_3(0)=\gamma_3(1)$ and such that the sequence $\{\gamma_{2,j}\}_j$ (resp.\ $\{\gamma_{3,j}\}_j$) converges to $\gamma_2$ (resp.\ $\gamma_3$) as $j\to\infty$ in the both $H^2$-weak and $C^1$ topology.
    In particular, $\liminf_{j\to\infty}E[\Gamma_j] \geq E[\gamma_2]+E[\gamma_3]$.
  \end{enumerate}
\end{lemma}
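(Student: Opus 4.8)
The plan is to run a direct-method dichotomy: extract a subsequence along which each component converges, and then separate the non-degenerate case (where the limit is again a $\Theta$-network) from the degenerate case (where exactly one component shrinks to a point, leaving two closed curves through a common point). First I would normalize: after reparameterization each $\gamma_{i,j}$ may be taken to be a constant-speed curve, so $|\gamma_{i,j}'|\equiv L[\gamma_{i,j}]$; after a translation I may fix the first triple junction $\gamma_{1,j}(0)=\gamma_{2,j}(0)=\gamma_{3,j}(0)$ at the origin. Writing $\ell_{i,j}:=L[\gamma_{i,j}]$, the uniform energy bound $E[\Gamma_j]=\sum_i\int_{\gamma_{i,j}}(|\kappa|^2+1)\,ds<C$ gives $\sum_i\ell_{i,j}\le C$ and $\sum_i\ell_{i,j}\int|\kappa_{i,j}|^2\le C$; up to a further subsequence each $\ell_{i,j}\to\ell_i\in[0,C]$, and (reordering so that $\ell_1=\min_i\ell_i$) only two scenarios can occur: either $\ell_1>0$, or $\ell_1=0$.

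In the non-degenerate case $\ell_i>0$ for all $i$, so $\int_{I}|\gamma_{i,j}''|^2\,dt=\ell_{i,j}^{3}\int_{\gamma_{i,j}}|\kappa_{i,j}|^2\,ds$ stays bounded; combined with $|\gamma_{i,j}'|\equiv\ell_{i,j}\le C$ and $\gamma_{i,j}(0)=0$, each sequence $\{\gamma_{i,j}\}$ is bounded in $H^2(I;\mathbf{R}^n)$, hence has a subsequence converging $H^2$-weakly and (by compact embedding $H^2\hookrightarrow C^1$) in $C^1$ to some $\gamma_i\in H^2(I;\mathbf{R}^n)$. Passing to the limit in the $C^1$-sense preserves the network structure: $\gamma_i(0)=0$ for all $i$, the common value $\gamma_{i,j}(1)$ converges to a common point, and the $\frac{2\pi}{3}$-angle conditions are closed under $C^1$-convergence — I should note here that $|\gamma_i'|\equiv\ell_i>0$ guarantees the limit is still immersed, so the angle relations genuinely pass to the limit and $\Gamma=(\gamma_1,\gamma_2,\gamma_3)\in\Theta(\mathbf{R}^n)$. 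Weak lower semicontinuity of $\int|\gamma_i''|^2$ together with the exact $C^1$-convergence of the length terms then yields $\liminf_j E[\Gamma_j]\ge E[\Gamma]$, which is assertion (i).

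In the degenerate case $\ell_1=0$, I would first observe that the single vanishing length cannot be accompanied by a second one without forcing the total curvature (via Lemma \ref{lem:piecewise}) to blow up — this is the point where I expect the real work to lie. The cleaner route is: since $\gamma_{1,j}(0)$ and $\gamma_{1,j}(1)$ are both junction points and $|\gamma_{1,j}'|\equiv\ell_{1,j}\to0$, the diameter of $\gamma_{1,j}$ is at most $\ell_{1,j}\to0$, so the two triple junctions of $\Gamma_j$ converge to a single point; consequently in the limit $\gamma_2$ and $\gamma_3$ become closed curves sharing that point, $\gamma_2(0)=\gamma_2(1)=\gamma_3(0)=\gamma_3(1)$. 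For the remaining two components $i=2,3$ I must still secure $\ell_i>0$: if, say, $\ell_2\to0$ as well, then $\gamma_{1,j}\cup\gamma_{2,j}$ would close up into a piecewise-$H^2$ loop of diameter $\to0$ but with at least one junction angle bounded away from $0$, and Lemma \ref{lem:piecewise} (applied with $N=2$, external angles $\le\pi-\frac{2\pi}{3}=\frac{\pi}{3}$ at the junctions) forces $\int|\kappa_{1,j}|+\int|\kappa_{2,j}|\ge 2\pi-\frac{2\pi}{3}=\frac{4\pi}{3}$, so by Cauchy–Schwarz $\ell_{1,j}\int|\kappa_{1,j}|^2+\ell_{2,j}\int|\kappa_{2,j}|^2\ge$ (total-curvature-squared over total length) $\to\infty$, contradicting the energy bound. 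Hence $\ell_2,\ell_3>0$, and the same $H^2$-boundedness argument as before gives $H^2$-weak and $C^1$ subsequential limits $\gamma_2,\gamma_3$; these are immersed, satisfy the stated endpoint identifications, and $\liminf_j E[\Gamma_j]\ge \liminf_j(E[\gamma_{2,j}]+E[\gamma_{3,j}])\ge E[\gamma_2]+E[\gamma_3]$ by weak lower semicontinuity and $C^1$-convergence of lengths, which is assertion (ii). The only delicate bookkeeping is ensuring that exactly one component degenerates (not two or three), and that is precisely what the piecewise Fenchel inequality of Lemma \ref{lem:piecewise} buys us.
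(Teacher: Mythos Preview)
Your proposal is correct and follows essentially the same approach as the paper: normalize to constant speed with a fixed junction at the origin, use $H^2$-boundedness and compact embedding to extract weak-$H^2$/$C^1$ limits when lengths stay bounded away from zero, and rule out the simultaneous degeneration of two components via the piecewise Fenchel inequality (Lemma~\ref{lem:piecewise}) combined with Cauchy--Schwarz. The paper's version of the last step bounds $\sum_{k=i,i'}B[\gamma_{k,j}]^{1/2}\geq (\min_k L[\gamma_{k,j}])^{-1/2}\cdot\tfrac{4\pi}{3}$ rather than your $(TC_1+TC_2)^2/(L_1+L_2)$, but the two inequalities are interchangeable here.
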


\begin{remark}
  The latter case corresponds to ``degenerate'' networks.
  In fact we have an additional constraint on the angles of $\gamma_2$ and $\gamma_3$ at their endpoints due to the original angle condition for $\Theta$-networks, cf.\ \cite[Definition 3.1]{DallAcqua2020}, but here (and there) such a constraint is not used.
\end{remark}

\begin{proof}[Proof of Lemma \ref{lem:dichotomy}]
  Throughout the proof we may suppose that after reparameterization, each curve $\gamma_{i,j}$ has domain $\bar{I}=[0,1]$ and is of constant speed, and also after translation, $\gamma_{i,j}(0)=0$.
  The proof proceeds similarly to \cite{DallAcqua2020}.

  We first consider the case that $\inf_j\min_{i=1,2,3}L[\gamma_{i,j}]>0$.
  Then the assumptions of constant-speed and energy-boundedness imply that
  \begin{equation*}
     \sum_{i=1,2,3}\frac{1}{L[\gamma_{i,j}]^3}\|\gamma_{i,j}''\|_{L^2(I;\mathbf{R}^n)}^2 = \sum_{i=1,2,3}B[\gamma_{i,j}] \leq  \sup_{j}E[\Gamma_j]<\infty.
  \end{equation*}
  By the fact that $\sup_{j}\max_{i=1,2,3}L[\gamma_{i,j}]\leq\sup_{j}E[\Gamma_j]<\infty$ we deduce the $L^2$-boundedness of $\{\gamma''_{i,j}\}_j$ for each $i=1,2,3$.
  Then, noting the nondegeneracy assumption on length, we deduce from a parallel argument to the proof of Proposition \ref{prop:existence_planarodd} that up to a subsequence each sequence $\{\gamma_{i,j}\}_j$ converges in the desired sense, and in particular $C^1$-convergence ensures that the limit curves again form a $\Theta$-network, so that assertion (i) holds.

  Next, we consider the case that, after permutations, $\inf_jL[\gamma_{1,j}]=0$ holds but we still have $\inf_j\min_{i=2,3}L[\gamma_{i,j}]>0$.
  In this case, the same argument as above ensures the desired convergence of $\gamma_{2,j}$ and $\gamma_{3,j}$ so that assertion (ii) holds.

  We finally prove that only the above cases are possible to occur, i.e., no two (or three) components degenerate.
  For each pair of $i,i'\in\{1,2,3\}$ with $i\neq i'$, the $\frac{2\pi}{3}$-angle condition on $\Theta$-networks implies that the curves $\gamma_{i,j}$ and $\gamma_{i',j}$ form a piecewise closed curve with exactly two jumps of angle $\theta_1=\theta_2=\pi/3$, and hence by Lemma \ref{lem:piecewise} we have $TC[\gamma_{i,j}]+TC[\gamma_{i',j}]\geq 4\pi/3$.
  By the Cauchy--Schwarz inequality that $L[\gamma]B[\gamma]\geq TC[\gamma]^2$,
  $$\sum_{k=i,i'}B[\gamma_{k,j}]^\frac{1}{2} \geq \sum_{k=i,i'}\frac{1}{L[\gamma_{k,j}]^\frac{1}{2}}TC[\gamma_{k,j}] \geq \min_{k=i,i'}\frac{1}{L[\gamma_{k,j}]^\frac{1}{2}} \frac{4\pi}{3}.$$
  Then energy-boundedness implies that $\inf_{j}\max_{k=i,i'}L[\gamma_{k,j}]>0$.
  By the arbitrariness of the choice of $i$ and $i'$, up to taking a subsequence, there are at least two indices $i\in\{1,2,3\}$ such that $\inf_{j}L[\gamma_{i,j}]>0$.
\end{proof}

In view of the above lemma we are naturally led to study the energy of ``drops'' appearing in the degenerate case.
The next statement is about energy optimal drops; it is a key ingredient that this estimate holds in any codimension.

\begin{proposition}\label{prop:minimizer2}
  Let $\gamma:[a,b]\to\mathbf{R}^n$ be an immersed $H^2$-curve such that $\gamma(a)=\gamma(b)$.
  Then
  \begin{equation}\label{eq:minimizer2}
    E[\gamma]\geq 2\sqrt{\varpi^*},
  \end{equation}
  where equality is attained if and only if $\gamma$ is a half-fold figure-eight elastica of length $\sqrt{\varpi^*}$.
\end{proposition}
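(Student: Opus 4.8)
The plan is to deduce this directly from Proposition \ref{prop:minimizer} by an elementary scaling/AM--GM argument, since $E=E_1=B+L$ and $\bar{B}=LB$ are linked by the arithmetic--geometric mean inequality.

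First I would observe that for any immersed $H^2$-curve $\gamma$ with $\gamma(a)=\gamma(b)$,
\begin{equation*}
  E[\gamma]=B[\gamma]+L[\gamma]\geq 2\sqrt{B[\gamma]L[\gamma]}=2\sqrt{\bar{B}[\gamma]},
\end{equation*}
and then apply Proposition \ref{prop:minimizer}, which gives $\bar{B}[\gamma]\geq\varpi^*$, to conclude $E[\gamma]\geq 2\sqrt{\varpi^*}$. This establishes inequality \eqref{eq:minimizer2}.

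Next I would analyze the equality case by tracking both inequalities. Equality in the AM--GM step holds if and only if $B[\gamma]=L[\gamma]$, and equality in the second step holds, by the rigidity part of Proposition \ref{prop:minimizer}, if and only if $\gamma$ is a half-fold figure-eight elastica (which then has $\bar{B}[\gamma]=\varpi^*$, i.e.\ $L[\gamma]B[\gamma]=\varpi^*$). Combining $B[\gamma]=L[\gamma]$ with $L[\gamma]B[\gamma]=\varpi^*$ forces $L[\gamma]^2=\varpi^*$, hence $L[\gamma]=\sqrt{\varpi^*}$. Conversely, if $\gamma$ is a half-fold figure-eight elastica of length $\sqrt{\varpi^*}$, then by Lemma \ref{lem:figureeight}(iii) we have $B[\gamma]=\bar{B}[\gamma]/L[\gamma]=\varpi^*/\sqrt{\varpi^*}=\sqrt{\varpi^*}=L[\gamma]$, so both inequalities are equalities and $E[\gamma]=2\sqrt{\varpi^*}$.

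There is no real obstacle here: the content is entirely carried by Proposition \ref{prop:minimizer}, and the only point requiring a little care is bookkeeping the two equality conditions simultaneously to pin down the length $\sqrt{\varpi^*}$ in the rigidity statement. (Alternatively, one may phrase the whole argument as: rescale $\gamma$ to $\Lambda\gamma$ so that $L[\Lambda\gamma]=B[\Lambda\gamma]=\sqrt{\bar{B}[\gamma]}$, at which scale $E$ equals $2\sqrt{\bar{B}[\gamma]}$ and is minimized, exactly as in the proof of Proposition \ref{prop:existence_planarodd}; this makes the scaling invariance transparent but is not logically necessary.)
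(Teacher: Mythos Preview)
Your proposal is correct and matches the paper's own proof essentially line for line: the paper also applies the AM--GM inequality $E=B+L\geq 2\sqrt{LB}=2\sqrt{\bar{B}}$ and then invokes Proposition~\ref{prop:minimizer}, tracking the two equality conditions to pin down $L=\sqrt{\varpi^*}$. Your write-up is slightly more explicit about the converse direction, but the argument is the same.
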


\begin{proof}
  Since $E[\gamma]= B[\gamma]+L[\gamma]\geq 2\sqrt{L[\gamma]B[\gamma]}$, Proposition \ref{prop:minimizer} implies that $E[\gamma]\geq 2\sqrt{\varpi^*}$.
  In addition, equality holds if and only if $B[\gamma]=L[\gamma]$ and $L[\gamma]B[\gamma]=\varpi^*$; in particular, $L[\gamma]=\sqrt{\varpi^*}$.
  Rigidity in Proposition \ref{prop:minimizer} implies that $\gamma$ is a half-fold figure-eight elastica, completing the proof.
\end{proof}

Now the main issue is reduced to showing that there is a (nondegenerate) $\Theta$-network of less energy than the minimal energy of degenerate networks consisting of two drops.
More precisely, we prove

\begin{lemma}\label{lem:degenerate_network}
  There exists a planar $\Theta$-network $\Gamma\in\Theta(\mathbf{R}^2)$ such that $E[\Gamma] < 4\sqrt{\varpi^*}$.
\end{lemma}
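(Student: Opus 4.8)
The plan is to explicitly construct a competitor $\Theta$-network in $\mathbf{R}^2$ and compute (or estimate from above) its energy $E = B + L$, then check the inequality $E[\Gamma] < 4\sqrt{\varpi^*}$ numerically. The natural candidate is the \emph{symmetric lens-shaped} $\Theta$-network: take the vertical segment as one curve $\gamma_1$, and two congruent circular arcs $\gamma_2,\gamma_3$ bulging out symmetrically to the left and right, meeting the segment's endpoints at angle $\tfrac{2\pi}{3}$ on each side. The $\tfrac{2\pi}{3}$ angle condition at both triple junctions then pins down the geometry up to a single scaling parameter. Concretely, if the two endpoints are a distance $\ell$ apart, each arc must leave the junction making angle $\tfrac{\pi}{3}$ with the segment, and by elementary circle geometry this forces each arc to be a circular arc of a fixed \emph{opening angle} (subtending a fixed central angle, say $\tfrac{2\pi}{3}$ at its own center) with radius proportional to $\ell$.

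First I would fix this geometry precisely: for the segment of length $\ell$, the left and right arcs are each arcs of radius $R = \ell/\sqrt{3}$ (so that the tangent at the endpoints makes the required $\tfrac{\pi}{3}$ with the chord), each subtending central angle $\tfrac{2\pi}{3}$, hence of length $L_{\mathrm{arc}} = \tfrac{2\pi}{3}R = \tfrac{2\pi \ell}{3\sqrt{3}}$ and curvature $\kappa \equiv 1/R$. Then
\[
  B[\Gamma] = B[\gamma_2] + B[\gamma_3] = 2 \cdot \frac{1}{R^2} \cdot L_{\mathrm{arc}} = \frac{4\pi}{3R} = \frac{4\pi}{3}\cdot\frac{\sqrt{3}}{\ell} = \frac{4\pi}{\sqrt{3}\,\ell},
\]
while
\[
  L[\Gamma] = \ell + 2 L_{\mathrm{arc}} = \ell\left(1 + \frac{4\pi}{3\sqrt{3}}\right).
\]
(The segment contributes nothing to $B$.) Thus $E[\Gamma] = B + L = \tfrac{c_1}{\ell} + c_2\,\ell$ with $c_1 = \tfrac{4\pi}{\sqrt 3}$ and $c_2 = 1 + \tfrac{4\pi}{3\sqrt 3}$, both explicit constants. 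Optimizing over $\ell > 0$ gives the minimal value $E_{\min} = 2\sqrt{c_1 c_2}$.

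The final step is the numerical verification that $2\sqrt{c_1 c_2} < 4\sqrt{\varpi^*}$, i.e.\ $c_1 c_2 < 4\varpi^*$. We have $c_1 = 4\pi/\sqrt 3 \approx 7.2552$ and $c_2 = 1 + 4\pi/(3\sqrt 3) \approx 1 + 2.4184 = 3.4184$, so $c_1 c_2 \approx 24.80$, whereas $4\varpi^* \approx 112.44$; the inequality holds with enormous room to spare. To make this rigorous without a computer one only needs crude bounds: $c_1 c_2 = \tfrac{4\pi}{\sqrt3}(1 + \tfrac{4\pi}{3\sqrt3}) = \tfrac{4\pi}{\sqrt3} + \tfrac{16\pi^2}{9}$, and since $\pi < 3.1416$ this is less than $\tfrac{4\cdot 3.1416}{1.732} + \tfrac{16\cdot 9.8697}{9} < 7.26 + 17.55 = 24.81$, while $4\varpi^* = 128(2m^*-1)E(m^*)^2 > 128\cdot\tfrac12\cdot E(m^*)^2$ using $m^* > 3/4$ from Lemma~\ref{lem:parameter} and $E(m^*) > E(1) = 1$, giving $4\varpi^* > 64 > 24.81$; alternatively, note $\varpi^* = 4\pi^2 \cdot (\bar B[\text{fig-8}]/4\pi^2)$ is comfortably bigger than $\pi^2$ by Lemma~\ref{lem:figureeight}(iii) and the fact $\varpi^* > 4\pi^2$ is not needed — the bound $\varpi^* > 4$ already suffices since $24.81 < 16 = 4\cdot 4$. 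I would pick whichever of these cheap estimates reads most cleanly.

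\emph{Main obstacle.} There is essentially no hard analytic obstacle here — the construction is explicit and the inequality is slack by a factor of roughly $4.5$. The only genuine care needed is getting the angle bookkeeping at the triple junctions exactly right (so that the constructed object really lies in $\Theta(\mathbf{R}^2)$, meeting the $\tfrac{2\pi}{3}$ condition at \emph{both} junctions with the correct orientation of all three tangent vectors), and confirming the resulting network has $H^2$ components — which is immediate since a segment and circular arcs are smooth. So the ``hard part'' is purely a matter of careful elementary plane geometry, and I would present the computation of $R$, $L_{\mathrm{arc}}$, $B$, $L$ as a short explicit lemma-internal calculation, then close with the one-line numerical comparison.
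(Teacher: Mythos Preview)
Your construction has a geometric error at exactly the point you flagged as needing care: the lens you describe is \emph{not} a $\Theta$-network. At each junction the three outward unit tangents must be pairwise at angle $\tfrac{2\pi}{3}$. With the segment's outward tangent pointing toward the other junction, each arc's outward tangent must make angle $\tfrac{2\pi}{3}$ with that direction, so the arcs initially point \emph{away} from the other junction. For circular arcs this forces central angle $\tfrac{4\pi}{3}$ (major arcs --- a double bubble), not $\tfrac{2\pi}{3}$; your minor-arc lens yields junction angles $(\tfrac{\pi}{3},\tfrac{\pi}{3},\tfrac{2\pi}{3})$, which is inadmissible.

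After correcting one still has $R=\ell/\sqrt{3}$, but now $L_{\mathrm{arc}}=\tfrac{4\pi}{3}R$, so $c_1=\tfrac{8\pi}{\sqrt 3}$, $c_2=1+\tfrac{8\pi}{3\sqrt3}$, and $c_1c_2=\tfrac{8\pi}{\sqrt3}+\tfrac{64\pi^2}{9}\approx 84.7$. The target $4\varpi^*\approx 112.4$ still wins, but not by the factor you expected, and your cheap bound $4\varpi^*>64$ (from $m^*>\tfrac34$ and $E(m^*)>E(1)=1$) is now too weak. (Your alternative $\varpi^*>4$ was already insufficient even for your own numbers: $4\cdot4=16<24.81$.) Getting an analytic lower bound $\varpi^*>21.2$ is feasible but not a one-liner; this is precisely where \cite{DallAcqua2020} resorted to computer-assisted numerics.

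The paper sidesteps the numerical comparison with a different competitor: two half-period wavelike elasticae $\gamma_{\mathrm{wave}}^m$ plus the connecting segment. The $\tfrac{2\pi}{3}$-condition holds exactly when $m=\tfrac{3}{4}$ (since the endpoint tangent angle satisfies $\cos\phi_m=2m-1$), and the rescaled energy $E[\hat\Gamma_{\mathrm{wave}}^m]$ is shown, via elementary derivative formulae for $K$ and $E$, to be strictly increasing in $m$ with limit $4\sqrt{\varpi^*}$ as $m\to m^*$. Since $\tfrac34<m^*$ by Lemma~\ref{lem:parameter}, strict inequality follows with no estimate of $\varpi^*$ needed at all. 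Your double-bubble route is salvageable in principle, but the paper's monotonicity argument is what makes the proof genuinely analytic.
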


In \cite{DallAcqua2020} the test network $\Gamma$ is taken to be a standard double-bubble, but the desired estimate is shown only through computer-assisted numerical computations, cf.\ \cite[Lemma 4.9]{DallAcqua2020} borrowed from \cite[Proposition 6.4]{DallAcqua2017}.
Here we provide a completely analytic proof by a totally different approach.
In fact, we construct a competitor $\Gamma$ by using a piece of a wavelike elastica, the energy estimate of which is based on monotonicity along deformations of wavelike elasticae with respect to parameter.

\begin{figure}[htbp]
  \includegraphics[width=30mm]{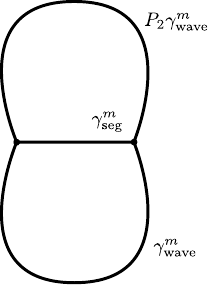}
  \caption{A sketch of $\Gamma_\mathrm{wave}^m=(\gamma_\mathrm{wave}^m,P_2\gamma_\mathrm{wave}^m,\gamma_\mathrm{seg}^m)$.}
  \label{fig:Gamma_wave}
\end{figure}

\begin{proof}[Proof of Lemma \ref{lem:degenerate_network}]
  Let $C_\mathrm{conv}$ be the class of all locally convex planar curves $\zeta=(\zeta^1,\zeta^2)^\top:[-a,a]\to\mathbf{R}^2$ such that $\zeta^1(x)=-\zeta^1(-x)$ and $\zeta^2(x)=\zeta^2(-x)\leq0$ hold for any $x\in[-a,a]$, and such that $\pm\zeta^1(\pm a)>0$ and $\zeta^2(\pm a)=0$.
  Below we construct a planar competitor $\Gamma=(\gamma_1,\gamma_2,\gamma_3)\in\Theta(\mathbf{R}^2)$ with the property that $\gamma_1\in C_\mathrm{conv}$, $\gamma_2=P_2\gamma_1$, where $P_2$ denotes the refection with respect to the $e_1$-axis, and $\gamma_3$ is a segment along the $e_1$-axis.

  Recall (from Section \ref{subsect:elastica} or \cite{Mueller2021}) that a half-period of a wavelike elastica in the plane $\mathbf{R}^2$ is given by $\gamma_\mathrm{wave}^m(s)$ in \eqref{eq:wavelike} with $s\in[-K(m),K(m)]$.
  Then the same computation in the proof of Lemma \ref{lem:figureeight} (iii) shows that
  \begin{equation}\label{eq08}
    L[\gamma_\mathrm{wave}^m]=2K(m), \quad B[\gamma_\mathrm{wave}^m]=8\big(E(m)-(1-m)K(m)\big).
  \end{equation}
  In addition, since the value of the first component of $\gamma_\mathrm{wave}^m$ at $s=\pm K(m)$ is given by $(\gamma_\mathrm{wave}^m)^1(\pm K(m))=\pm (2E(m)-K(m))$, and since $2E(m)-K(m)>0$ holds for any $m<m^*$ (by definition of $m^*$ and monotonicity of $2E-K$), we have $\gamma_\mathrm{wave}^m\in C_\mathrm{conv}$ for $m<m^*$.

  We then define (up to reparameterization) a network $\Gamma_\mathrm{wave}^m$ for $m<m_*$ by
  $$\Gamma_\mathrm{wave}^m:=(\gamma_\mathrm{wave}^m,P_2\gamma_\mathrm{wave}^m,\gamma_\mathrm{seg}^m),$$
  where $\gamma_\mathrm{seg}^m(x):=\big( (2E(m)-K(m))x,0 \big)^\top$ for $x\in(-1,1)$, cf.\ Figure \ref{fig:Gamma_wave}.
  (Note that $\Gamma_\mathrm{wave}^m$ is not necessarily a $\Theta$-network since the $\frac{2\pi}{3}$-angle condition at the junctions may not be satisfied.)
  By \eqref{eq08}, and by $L[\gamma_\mathrm{seg}^m]=2(2E(m)-K(m))$ and $B[\gamma_\mathrm{seg}^m]=0$, we have
  \begin{align*}
    L[\Gamma_\mathrm{wave}^m] &= 2L[\gamma_\mathrm{wave}^m] + L[\gamma_\mathrm{seg}^m] = 2\big( 2E(m)+K(m) \big),\\
    B[\Gamma_\mathrm{wave}^m] &= 2B[\gamma_\mathrm{wave}^m] + B[\gamma_\mathrm{seg}^m] = 16\big(E(m)-(1-m)K(m)\big).
  \end{align*}
  Then, after rescaling so that $E=2\bar{B}^\frac{1}{2}$, namely taking
  $$\hat\Gamma_\mathrm{wave}^m:=\Lambda_m\Gamma_\mathrm{wave}^m$$
  with $\Lambda_m:=\sqrt{B[\Gamma_\mathrm{wave}^m]/L[\Gamma_\mathrm{wave}^m]}$, we have
  \begin{equation}\label{eq09}
    E[\hat{\Gamma}_\mathrm{wave}^m] = 2\bar{B}[\Gamma_\mathrm{wave}^m]^\frac{1}{2} = 2\sqrt{32\big( 2E(m)+K(m) \big)\big(E(m)-(1-m)K(m)\big)}.
  \end{equation}
  In particular, by this representation and by definition of $m^*$ and $\varpi^*$,
  \begin{equation}\label{eq10}
    \lim_{m\to m^*}E[\hat{\Gamma}_\mathrm{wave}^m] = 4\sqrt{\varpi^*}.
  \end{equation}

  We now prove that the above-defined network $\hat{\Gamma}_\mathrm{wave}^m$ with parameter $m=\frac{3}{4}<m^*$, cf.\ Lemma \ref{lem:parameter}, gives the desired $\Theta$-network.
  More precisely, we prove:
  \begin{enumerate}
    \item $\hat{\Gamma}_\mathrm{wave}^{3/4}\in\Theta(\mathbf{R}^2)$, i.e., satisfies the $\frac{2\pi}{3}$-angle condition at the junctions.
    \item $E[\hat{\Gamma}_\mathrm{wave}^{3/4}]<4\sqrt{\varpi^*}$.
  \end{enumerate}

  We first prove property (i).
  It suffices to consider $\Gamma_\mathrm{wave}^{3/4}$ (before rescaling).
  In addition, by symmetry we only need to prove that if we let $\phi_m\in(0,\pi)$ denote the angle made by the tangent vector of $\gamma_\mathrm{wave}^{3/4}$ at the endpoint $s=-K(m)$ and the vector $-e_1=(-1,0)^\top$, then $\phi_{3/4}=\pi/3$.
  Arguing similarly to the proof of Lemma \ref{lem:figureeight} (iv), we obtain
  \begin{equation}\label{eq12}
    \cos\phi_m = 2m-1.
  \end{equation}
  In particular, $\cos\phi_{3/4}=1/2$ and hence $\phi_{3/4}=\pi/3$.

  We finally prove property (ii).
  To this end, by \eqref{eq09} and \eqref{eq10}, and by the fact that $\frac{3}{4}<m^*$, it is sufficient to prove that the energy in \eqref{eq09} is strictly increasing in $m\in(0,1)$.
  It is thus sufficient to show monotonicity of the functions $f$ and $g$ defined through $h(m):=E(m)-(1-m)K(m)$ by
  $$f(m):=E(m)h(m), \quad g(m):=K(m)h(m).$$
  By the well-known derivative formulae (cf.\ \cite[p.521]{Whittaker1962}) that
  $$\tfrac{d}{dm}E(m)=\tfrac{1}{2m}(E(m)-K(m)), \quad \tfrac{d}{dm}K(m)=\tfrac{1}{2m(1-m)}(E(m)-(1-m)K(m)),$$
  we find that $E'=\tfrac{1}{2m}h-\tfrac{1}{2}K$ and $K'=\tfrac{1}{2m(1-m)}h$, and also that $h'=\frac{1}{2}K$.
  Hence,
  \begin{align*}
    f' = E'h+Eh' &= (\tfrac{1}{2m}h-\tfrac{1}{2}K)h + (h+(1-m)K)(\tfrac{1}{2}K)\\
    &= \tfrac{1}{2m}h^2 + \tfrac{1-m}{2}K^2 >0.\\
    g' = K'h+Kh' &= \tfrac{1}{2m(1-m)}h^2+\tfrac{1}{2}K^2>0.
  \end{align*}
  This implies the desired monotonicity of the energy in \eqref{eq09}.
\end{proof}

We are now ready to complete the proof of Theorem \ref{thm:existence_network}.

\begin{proof}[Proof of Theorem \ref{thm:existence_network}]
  Let $\{\Gamma_j\}_j\subset\Theta(\mathbf{R}^n)$ be a minimizing sequence such that $E[\Gamma_j]\to\inf_{\Theta(\mathbf{R}^n)}E$.
  Then, possibly after reparameterization, translation, and taking a subsequence, either assertion (i) or (ii) in Lemma \ref{lem:dichotomy} holds.
  However, assertion (ii) does not occur in view of the last lower semicontinuity condition that $\liminf_{j\to\infty}E[\Gamma_j]\geq E[\gamma_2]+E[\gamma_3]$; indeed, since each of the curves $\gamma_2$ and $\gamma_3$ in (ii) is a competitor in Proposition \ref{prop:minimizer2}, we have $E[\gamma_2]+E[\gamma_3]\geq4\sqrt{\varpi^*}$; on the other hand, by Lemma \ref{lem:degenerate_network},
  $$\lim_{j\to\infty}E[\Gamma_j]=\inf_{\Theta(\mathbf{R}^n)}E\leq \inf_{\Theta(\mathbf{R}^2)}E<4\sqrt{\varpi^*}.$$
  Therefore we have assertion (i), which implies that there exists a minimizing $\Theta$-network $\Gamma\in\Theta(\mathbf{R}^n)$ since $E[\Gamma]\leq\lim_{j\to\infty}E[\Gamma_j]=\inf_{\Theta(\mathbf{R}^n)}E \leq E[\Gamma]$.
\end{proof}

Concerning minimal elastic $\Theta$-networks, there are many further problems remained open; for example, as is posed in \cite{DallAcqua2017,DallAcqua2020}, symmetry and global injectivity are still open even in the planar case.
Component-wise injectivity as in \cite[Proposition 4.11]{DallAcqua2020} can be shown via computer-assisted estimates also for any $n\geq2$.

\begin{remark}[Generalized $\Theta$-networks]
  In fact, in the proof in Lemma \ref{lem:degenerate_network}, not only the energy $E[\hat{\Gamma}_\mathrm{wave}^m]$ but also the angle $\phi_m$ in \eqref{eq12} is monotone in $m$, and hence the same assertion holds for a more general class of networks; namely, we may replace the symmetric angle condition $(\frac{2\pi}{3},\frac{2\pi}{3},\frac{2\pi}{3})$ by the partially asymmetric condition $(\alpha,\alpha,2\pi-2\alpha)$ with $\alpha\in(0,\pi-\phi^*)$, where we recall that $\phi^*=\phi_{m^*}$, cf.\ Lemma \ref{lem:figureeight} (iv).
  In addition, if $n>2$, then we may also allow the condition $(\alpha,\alpha,\beta)$ for any $\beta\in(0,2\pi-2\alpha)$, where junctions are not asymptotically planar but conical; our construction in Lemma \ref{lem:degenerate_network} also covers this case since we may just replace $P_2\gamma_\mathrm{wave}^m$ by $R_\theta\gamma_\mathrm{wave}^m$, where $R_\theta$ denotes the rotation matrix around the axis $\gamma_\mathrm{seg}^m$ through a suitable angle $\theta\in(0,\pi)$, to satisfy the desired angle condition.
  Accordingly, a parallel argument implies the same existence result of a minimal nondegenerate network as in Theorem \ref{thm:existence_network} in such generalized classes.

  However, the above upper bound $\alpha<\pi-\phi^*$ ($<\frac{3\pi}{4}$) would not be optimal in view of numerical computations in \cite{DallAcqua2017,DallAcqua2020}.
  In fact, we have $4\sqrt{\varpi^*}\approx21.207$, while a suitable double-bubble $\Gamma_\alpha$ corresponding to a given angle $\alpha\in(0,\pi)$ has energy $E[\Gamma_\alpha]=4\sqrt{2\alpha(2\alpha+\sin\alpha)}$ and in particular $E[\Gamma_{3\pi/4}]\approx20.214$.
  Hence, at least up to $\alpha\leq\frac{3\pi}{4}$, a counterpart of Lemma \ref{lem:degenerate_network} holds so that there still exists a nondegenerate minimal elastic network.
  It would be interesting to ask whether and when minimal elastic networks degenerate.
\end{remark}

\appendix

\section{The parameter of figure-eight elastica}\label{sec:app:figure-eight}

Here we give a proof of Lemma \ref{lem:parameter}.
Our proof is based on the series expansion and provides very simple machinery to estimate the value of the parameter $m^*$ of a figure-eight elastica, which can be as accurate as one needs in principle.

\begin{proof}[Proof of Lemma \ref{lem:parameter}]
  Recall the known series expansions of $E$ and $K$ (e.g.\ found in \cite[17.3.11, 17.3.12]{Abramowitz1992}) that are absolutely convergent for any $m\in(0,1)$:
  \begin{align*}
    K(m) &= \int_0^{\pi/2}\frac{1}{\sqrt{1-m\sin^2\theta}}d\theta = \frac{\pi}{2}\sum_{n=0}^\infty\left(\frac{(2n-1)!!}{(2n)!!}\right)^2m^n,\\
    E(m) &= \int_0^{\pi/2}\sqrt{1-m\sin^2\theta}d\theta = \frac{\pi}{2}\sum_{n=0}^\infty\left(\frac{(2n-1)!!}{(2n)!!}\right)^2\frac{1}{1-2n}m^n.
  \end{align*}
  Letting $f(m) := \frac{2}{\pi}(K(m)-2E(m))$, we deduce from these expansions that
  \begin{equation*}
    f(m) = -1 + \sum_{n=1}^\infty A_nm^n, \quad \text{where}\ A_n:=\left(\frac{(2n-1)!!}{(2n)!!}\right)^2\frac{2n+1}{2n-1}.
  \end{equation*}
  Since $m^*$ is a unique root of the increasing continuous function $f$, it is now sufficient to prove that
  \begin{equation}\label{eq03}
    f(0.75)<0, \quad f(0.85)>0.
  \end{equation}
  To this end we first state general criteria for clarity.
  Let
  \begin{equation*}
    S_N(m):=\sum_{n=1}^NA_nm^n, \quad T_N(m):=S_N(m)+\frac{1}{1-m}m^{N+1}.
  \end{equation*}
  Then the following general estimate holds for any $m\in(0,1)$ and any integer $N\geq1$:
  \begin{equation}\label{eq04}
    S_N(m) \leq f(m) + 1 \leq T_N(m).
  \end{equation}
  Indeed, the lower bound is trivial since $A_nm^n\geq0$ and $S_N(m)$ is a partial sum of $\sum A_nm^n$, while the upper bound also follows since $A_n\leq1$ holds by induction and hence $\sum A_nm^n \leq S_N(m) + \sum_{n=N+1}^\infty m^n = T_N(m)$.
  We thus obtain general criteria to compare $f(m)$ and $0$; namely, by \eqref{eq03} and \eqref{eq04} it is now sufficient to find some examples of $N,N'\geq1$ such that $T_{N}(\frac{75}{100})<1$ and $S_{N'}(\frac{85}{100})>1$.
  An explicit computation (of finite operations multiplying integers) shows that
  \begin{equation*}
    T_{10}\left(\frac{75}{100}\right)=\frac{71740047753969831}{72057594037927936}<1, \quad S_7\left(\frac{85}{100}\right)= \frac{1739865847127}{1717986918400} >1,
  \end{equation*}
  thus completing the proof.
\end{proof}

\section{Fenchel's theorem for curves with vertices}\label{sec:app:Fenchel}

In this section we prove Lemma \ref{lem:piecewise}.
Fenchel's theorem asserts that the total curvature $TC[\gamma]=\int_\gamma|\kappa|ds$ satisfies $TC[\gamma]\geq2\pi$ for smooth closed curves in $\mathbf{R}^n$, and hence by the standard mollifier approximation the same is true for $W^{2,1}$ closed curves.
Here we extend this to curves with vertices, or in other words to piecewise $W^{2,1}$ closed curves.
This generalizes \cite[Theorem A.1]{DallAcqua2020} from $n=2$ to $n\geq2$ by a different approach.

Our argument is based on the intuitive understanding that the external angle $\theta$ can be directly incorporated into the total curvature $TC$.
Indeed, if we consider a planar polygon with vertices of external angle $\theta_1,\dots,\theta_N\in(0,\pi)$, then by rounding each vertex by a small circular arc of angle $\theta_j$, we obtain a closed curve $\gamma$ of class $C^{1,1}=W^{2,\infty}$ such that $TC[\gamma]=\theta_1+\dots+\theta_N$.
Our proof is devoted to extending this idea to the general case.
(Another possible approach might be to use Sullivan's FTC-framework \cite{Sullivan2008}.)

\begin{proof}[Proof of Lemma \ref{lem:piecewise}]
  Up to reparameterizations we may assume that $|\gamma_j|\equiv 1$ for all $j$; in this case the domain intervals are of the form $[0,L_j]$ (not necessarily $L_j=1$).
  Without loss of generality we may assume that $\theta_j>0$ for all $j$; indeed, if $\theta_j=0$, then we may regard the composite of $\gamma_j$ and $\gamma_{j+1}$ as a single $W^{2,1}$-curve.
  Below we first consider the generic case that $\max_j\theta_j<\pi$, and then separately treat the case where $\theta_j=\pi$ occurs.

  {\em Case 1: $\theta_j<\pi$ for all $j$.}
  The proof of this case consists of two steps: We first argue for locally polygonal vertices, and then extend it to the general case by approximation.

  Suppose that at all vertices $p_j$, both $\gamma_j$ and $\gamma_{j+1}$ are (locally) given by straight segments in small neighborhoods of the corresponding endpoints.
  In this case, around each vertex $p_j$, the composite of two curves $\gamma_j$ and $\gamma_{j+1}$ makes a planar edge with a vertex of external angle $\theta_j<\pi$, and hence we can concretely round each angular part by using circular arcs $C_j$ of small radius and central angle $\theta_j$.
  As a result we obtain a closed $W^{2,1}$-curve $\bar{\gamma}$.
  Since all the segment parts have zero total curvature, while $TC[C_j]=\theta_j$, we have
  $$TC[\bar{\gamma}]=\sum_{j=1}^NTC[\gamma_j]+\sum_{j=1}^N\theta_j.$$
  On the other hand, Fenchel's theorem for closed $W^{2,1}$-curves implies that $TC[\bar{\gamma}]\geq2\pi$.
  Combining these two relations completes the proof in this case.

  Finally we prove the general case.
  In view of the above ``locally polygonal'' case, it is sufficient to show that for each $j$ there exists an immersed curve $\gamma_{j,\delta}\in W^{2,1}(0,L_j;\mathbf{R}^n)$ such that the family $\gamma_{1,\delta},\dots,\gamma_{N,\delta}$ not only has the same vertices $p_1,\dots,p_N$ and angles $\theta_1,\dots,\theta_N$ as $\gamma_1,\dots,\gamma_N$, but also satisfies the above ``locally polygonal'' assumption, and also $\lim_{\delta\to+0}TC[\gamma_j^\delta]= TC[\gamma_j]$.
  It is sufficient to prove that such a procedure can be done in a small neighborhood of each endpoint independently.
  Let $\gamma\in W^{2,1}(0,L;\mathbf{R}^n)\subset C^1([0,L];\mathbf{R}^n)$ be a given unit-speed curve.
  Let $\eta\in C^\infty(\mathbf{R})$ be a nonincreasing cut-off function such that $\eta\equiv1$ on $(-\infty,\frac{1}{2}]$ and $\eta\equiv0$ on $[1,\infty)$.
  Let $\eta_\delta(x):=\eta(x/\delta)$ for $\delta\in(0,\frac{L}{2})$.
  Let $\xi(s):=\gamma(0)+s\gamma'(0)$ for $s\in[0,L]$ and
  $$\gamma_\delta:=(1-\eta_\delta)\gamma+\eta_\delta\xi.$$
  This curve satisfies that $\gamma_\delta(0)=\xi(0)=\gamma(0)$ and $\gamma_\delta'(0)=\xi'(0)=\gamma'(0)$, that $\gamma_\delta''\equiv0$ on $[0,\delta/2]$ (i.e., locally segment near $s=0$), and that $\gamma_\delta\equiv\gamma$ on $[\delta,L]$.
  It remains to show that $TC[\gamma_\delta]\to TC[\gamma]$.
  It suffices to show that $\gamma_\delta\to\gamma$ in $W^{2,1}$ (since this also implies $C^1$-convergence).
  Here we explicitly compute only the second-order derivative, which is the most delicate part.
  Noting that the $C^1$-curve $\gamma$ can be expanded as $\gamma(s)=\gamma(0)+s\gamma'(0)+o(s)$ and $\gamma'(s)=\gamma'(0)+o(1)$ so that $\gamma(s)-\xi(s)=o(s)$ and $\gamma'(s)-\xi'(s)=o(1)$, we compute
  \begin{align*}
    \gamma_\delta''(s) &= \gamma''(s) - \eta_\delta(s)\gamma''(s) +\eta_\delta'(s)o(1) + \eta_\delta''(s)o(s)\\
    &= \gamma''(s) - \eta(s/\delta)\gamma''(s) + \delta^{-1}\eta'(s/\delta)o(1) + \delta^{-2}\eta''(s/\delta)o(s),
  \end{align*}
  and hence, noting that $\eta^{(i)}(s/\delta)\equiv0$ for $s\geq \delta$ and $i=0,1,2$, we obtain
  \begin{align*}
    \int_0^L|\gamma_\delta''(s)-\gamma''(s)|ds &\leq \int_0^L|\eta(s/\delta)\gamma''(s) + \delta^{-1}\eta'(s/\delta)o(1) + \delta^{-2}\eta''(s/\delta)o(s)|ds\\
    &\leq \int_0^{\delta}|\gamma''(s)|ds + \|\eta'\|_\infty o(1) + \delta^{-1}\|\eta''\|_\infty o(\delta) \to0 \quad (\delta\to0).
  \end{align*}
  Showing similarly that $\gamma'_\delta\to\gamma'$ and $\gamma_\delta\to\gamma$ in $L^1$, we complete the proof.

  \emph{Case 2: $\theta_j=\pi$ for some $j$.}
  We may assume that $\theta_j=\pi$ holds for exactly one $j$ since otherwise the assertion becomes trivial.
  To the vertex $p_j$ we attach a teardrop-like planar $W^{2,1}$-curve $\eta^\delta:[0,L_\delta]\to\mathbf{R}^n$ such that $\eta^\delta(0)=\eta^\delta(L_\delta)=p_j$, $\gamma_j'(L_j)=(\eta^\delta)'(0)=-(\eta^\delta)'(L_\delta)$ ($=-\gamma_{j+1}'(0)$), and $\lim_{\delta\to0}TC[\eta^\delta]=\pi$.
  Such a curve can be constructed by closing a semi-circle of radius $\delta$ by adding almost straight curves; see \cite[Lemma 2.2]{Miura2022} for more details.
  Then we may regard the composite of $\gamma_j,\eta_\delta,\gamma_{j+1}$ as a single $W^{2,1}$-curve $\tilde{\gamma}_j^\delta$ with total curvature $TC[\tilde{\gamma}_j^\delta]=TC[\gamma_j]+(\pi+o(1))+TC[\gamma_{j+1}]$ as $\delta\to0$.
  Therefore, applying Case 1 to the family $\gamma_1,\dots,\gamma_N$ with $\gamma_j,\gamma_{j+1}$ replaced by $\tilde{\gamma}_j^\delta$, and taking $\delta\to0$, we complete the proof.
\end{proof}

\bibliography{bibliography}

\end{document}